\numberwithin{equation}{section}
\DeclareMathOperator{\ord}{ord}
\DeclareMathOperator{\Ord}{Ord}
\DeclareMathOperator{\lcm}{lcm}
\DeclareMathOperator{\tr}{Tr}
\DeclareMathOperator{\tchar}{char}
\newtheorem{thm}{Theorem}[section]
\newtheorem{lem}{Lemma}[section]
\newtheorem{prop}{Proposition}[section]
\newtheorem{conj}{Conjecture}[section]
\newtheorem{exa}{Example}[section]
\newtheorem{cor}{Corollary}[section]
\newtheorem{dfn}{Definition}[section]
\newtheorem{exe}{Exercise}[section]
\newcommand{\N}{\mathbb{N}}
\newcommand{\Z}{\mathbb{Z}}
\newcommand{\R}{\mathbb{R}}
\newcommand{\C}{\mathbb{C}}
\newcommand{\F}{\mathbb{F}}
\newcommand{\tP}{\mathbb{P}}
\let\LaTeXStandardTableOfContents\tableofcontents
\renewcommand{\tableofcontents}{%
	\begingroup%
	\renewcommand{\bfseries}{\relax}%
	\LaTeXStandardTableOfContents%
	\endgroup%
}%
\title{Small Primitive Normal Elements in Finite Fields}
\date{}
\author{N. A. Carella}
\begin{document}

\thispagestyle{empty}
\date{}
	\maketitle

\vskip .25 in 
\begin{abstract}
Let $q=p^k$ be a prime power, let $\F_q$ be a finite field and let $n\geq2$ be an integer. This note investigates the existence small primitive normal elements in finite field extensions $\F_{q^n}$. It is shown that a small nonstructured subset $\mathcal{A}\subset \F_{q^n}$ of cardinality $\#\mathcal{A}\gg (\log q^n)  (\log\log q^n)^{1+\varepsilon}) $, where $\varepsilon>0$ is a small number, contains a primitive normal element. \let\thefootnote\relax\footnote{ \today \date{} \\
		\textit{AMS MSC}: Primary 11T30; 12E20, Secondary 11N37. \\
		\textit{Keywords}: Finite field; Primitive element; Normal element; Complexity of primitive normal element.}
\end{abstract}
\setcounter{tocdepth}{1}
\tableofcontents
\section{Introduction} \label{S4343PNEFF-I}\hypertarget{S4343PNEFF-I}
Let $q$ be a prime power and let $\F_{q^n}$ be a finite field extension of $\F_q$ of degree $[\F_{q^n}:\F_q]=n$. Two important representations of a finite field are the additive group and the multiplicative group. A normal element $\eta\in \F_{q^n}$ generates the finite field as an additive group 
\begin{equation} \label{eq4343PNEFF.050b}
\F_{q^n}=\{ \alpha=a_0\eta+ a_1\eta^q+a_2\eta^{q^2}+\cdots+a_{n-1}\eta^{q^{n-1}}:a_i\in \F_q\}
\end{equation}	
and a primitive element $\eta\in \F_{q^n}$ generates the finite field as a multiplicative group 
\begin{equation} \label{eq4343PNEFF.050d}
	\F_{q^n}^{\times}=\{ \alpha=\eta^k:k\in [0,q^n-2]\}.
\end{equation}	
While, a primitive normal element $\eta\in \F_{q^n}$ generates both groups. Various partial results on the existence of primitive element normal elements in finite fields were achieved in \cite{CL1952} and \cite{DH1968}. The first complete result was proved in \cite{LS1987}. Currently, there is a large literature on this topic and related concepts, see \cite{LS1987}, \cite{CN2005}, \cite{FR2024}, et alia. However, there is no literature on the existence of primitive normal elements in small subsets $\mathcal{A}\subset \F_{q^n}$ of cardinality $\#\mathcal{A}=O((\log q^n) (\log\log q^n)^{c}) $, where $c>0$ is a constant. The definition of small elements and small subsets are stated in \hyperlink{S7171CFNE.100B1} {Definition} \ref{dfn7171CFNE.100B1} \hyperlink{S7171CFNE.100B2} {Definition} \ref{dfn7171CFNE.100B2}. The corresponding counting function for the number of primitive normal elements is defined by
\begin{equation} \label{eq4343PNEFF.050j}
N_q(\mathcal{A})	=\#\{ \alpha \in \mathcal{A}: \alpha \text{ is primitive normal element}\}.
\end{equation}	
This note provides a new result in this direction.
\begin{thm} \label{thm4343PNEFF.050}\hypertarget{thm4343PNEFF.050}  Let $q=p^k$ be a prime power and $\varepsilon>0$ be a small number. Let 
$\mathcal{A}\subset \F_{q^n}$ be a nonstructured subset of cardinality 
\begin{equation}\label{eq4343PNEFF.050j1}
	\#\mathcal{A}\gg (\log q^n) (\log\log q^n)^{1+\varepsilon}.
\end{equation}	
Then, as $q^n\to \infty$, the subset $\mathcal{A}$ contains primitive normal elements, uniformly for all $q\geq2$, $k\geq1$ and $n\geq2$. In particular, for large $q^n> q^2$, the weighted counting function for the number of primitive normal elements has the asymptotic lower bound   
\begin{equation} \label{eq4343PNEFF.050i3}
N_q(\mathcal{A})\gg(\log\log q^n)^{\varepsilon}.
	\end{equation}	
\end{thm}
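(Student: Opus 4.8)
The plan is to run the indicator-function / character-sum method that is standard for primitive and normal element problems (going back to \cite{CL1952,DH1968,LS1987}), but over the small set $\mathcal{A}$ in place of the whole field $\F_{q^n}$. Write $\phi$ for Euler's function, $\theta_1=\phi(q^n-1)/(q^n-1)$ and $\theta_2=\Phi_q(x^n-1)/q^n$, where $\Phi_q$ and $\mu_q$ denote the Euler and M\"obius functions on monic polynomials of $\F_q[x]$. Viewing $\F_{q^n}$ as an $\F_q[x]$-module with $x$ acting by the Frobenius $\sigma\colon\beta\mapsto\beta^{q}$, an element $\alpha$ is normal over $\F_q$ precisely when its $\F_q[x]$-order equals $x^n-1$, and one has the two dual indicators
\begin{align*}
\mathbf{1}_{\mathrm{prim}}(\alpha)&=\theta_1\sum_{d\mid q^n-1}\frac{\mu(d)}{\phi(d)}\sum_{\ord(\chi)=d}\chi(\alpha),\\
\mathbf{1}_{\mathrm{norm}}(\alpha)&=\theta_2\sum_{g\mid x^n-1}\frac{\mu_q(g)}{\Phi_q(g)}\sum_{\Ord(\psi)=g}\psi(\alpha),
\end{align*}
the first inner sum running over the multiplicative characters $\chi$ of $\F_{q^n}^{\times}$ of exact order $d$, the second over the additive characters $\psi$ of $\F_{q^n}$ of exact $\F_q[x]$-order $g$, so that $g$ runs over the monic divisors of $x^n-1$. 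Setting $T(\chi,\psi)=\sum_{\alpha\in\mathcal{A}}\chi(\alpha)\psi(\alpha)$, the counting function \eqref{eq4343PNEFF.050j} becomes
\begin{align*}
N_q(\mathcal{A})&=\sum_{\alpha\in\mathcal{A}}\mathbf{1}_{\mathrm{prim}}(\alpha)\,\mathbf{1}_{\mathrm{norm}}(\alpha)\\
&=\theta_1\theta_2\sum_{d\mid q^n-1}\sum_{g\mid x^n-1}\frac{\mu(d)\,\mu_q(g)}{\phi(d)\,\Phi_q(g)}\sum_{\ord(\chi)=d}\sum_{\Ord(\psi)=g}T(\chi,\psi).
\end{align*}

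The pair $(d,g)=(1,1)$, i.e.\ $\chi=\psi=1$, contributes the main term $\theta_1\theta_2\,\#\mathcal{A}$. I would bound it from below by the elementary estimates $\theta_1\gg1/\log\log q^n$ (from $\phi(m)/m\gg1/\log\log m$) and $\theta_2\gg1/\log n\gg1/\log\log q^n$ (the $\F_q[x]$-analogue of the former, applied to the squarefree part of $x^n-1$, of degree $\le n$, and valid for all $q\ge2$, $n\ge2$); hence $\theta_1\theta_2\gg(\log\log q^n)^{-2}$ and, by the hypothesis \eqref{eq4343PNEFF.050j1},
\[
\theta_1\theta_2\,\#\mathcal{A}\ \gg\ \frac{\log q^n}{(\log\log q^n)^{1-\varepsilon}}\ \gg\ (\log\log q^n)^{\varepsilon}.
\]
All remaining pairs $(d,g)\ne(1,1)$ form the error $E$, and the whole theorem is reduced to the bound $|E|\le\tfrac12\theta_1\theta_2\,\#\mathcal{A}$: granting it, $N_q(\mathcal{A})\ge\theta_1\theta_2\#\mathcal{A}-|E|\gg(\log\log q^n)^{\varepsilon}>0$, which both shows that $\mathcal{A}$ contains a primitive normal element and yields the lower bound \eqref{eq4343PNEFF.050i3}.

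The decisive input is a cancellation estimate for the mixed multiplicative--additive character sum $T(\chi,\psi)$ over $\mathcal{A}$, valid uniformly over all $(\chi,\psi)\ne(1,1)$. If $\mathcal{A}$ were \emph{structured} --- an interval in $\F_q[x]/(f)$, a coset, a multiplicative or an additive orbit --- one could invoke a Weil-type bound; but here $\#\mathcal{A}$ has size only $(\log q^n)^{1+o(1)}$ and carries no such structure. This is precisely where the hypothesis that $\mathcal{A}$ is \emph{nonstructured} in the sense of \hyperlink{S7171CFNE.100B2}{Definition}~\ref{dfn7171CFNE.100B2} is used: it is formulated so as to force a saving of the shape $|T(\chi,\psi)|\ll(\#\mathcal{A})^{1/2}$, or stronger, for all nontrivial pairs at once. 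Inserting this into the double sum and handling the divisor/character combinatorics through the identity $\sum_{d\mid q^n-1}\frac{|\mu(d)|}{\phi(d)}\#\{\chi:\ord(\chi)=d\}=2^{\omega(q^n-1)}$ and its polynomial counterpart shows the naive M\"obius expansion to be too lossy; so I would instead run Cohen's sieving lemma, retaining only the characters of prime order $p\mid q^n-1$ and the additive characters of monic-irreducible order $\pi\mid x^n-1$. That exchanges the $2^{\omega}$ blow-up for the linear losses $\sum_{p\mid q^n-1}\tfrac1{p-1}$ and $\sum_{\pi\mid x^n-1}\tfrac1{q^{\deg\pi}-1}$ in the main term and leaves only a controlled (polynomial in $\log q^n$) number of character sums in $E$.

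\textbf{Main obstacle.} The crux, which I expect to be genuinely hard, is twofold. First, for a truly arbitrary small subset \emph{no} nontrivial bound on $T(\chi,\psi)$ can hold, so all the content lies in choosing Definition~\ref{dfn7171CFNE.100B2} to be at once (i) strong enough, after sieving, to keep $|E|$ below $\theta_1\theta_2\#\mathcal{A}$ and (ii) weak enough to be satisfied by the ``small'' subsets of \hyperlink{S7171CFNE.100B1}{Definition}~\ref{dfn7171CFNE.100B1}. Second, to reach the threshold $\#\mathcal{A}\gg(\log q^n)(\log\log q^n)^{1+\varepsilon}$ --- rather than a cruder $\#\mathcal{A}\gg(\log q^n)^{2}$, which mere square-root cancellation together with polynomially many surviving characters would give --- one must push both savings close to optimal: either the definition must supply cancellation better than square-root (of the form $|T(\chi,\psi)|\ll(\#\mathcal{A})^{o(1)}$), or the sieve must be arranged to leave only $(\log q^n)^{o(1)}$ characters, or some combination. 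Getting this balance right is the real difficulty; the supporting ingredients --- the $\F_q[x]$-module description of normality and the exact order of an additive character, Cohen's sieve, and the lower bounds for $\theta_1,\theta_2$ --- are all standard.
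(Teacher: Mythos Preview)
Your proposal follows the classical Carlitz--Davenport--Lenstra--Schoof route via the divisor-dependent indicators (the paper's Lemma~\ref{lem2727CFFR.100-F} and Lemma~\ref{lem7171CFNE.150b}), and you correctly isolate the genuine obstruction in that route: with $\#\mathcal{A}$ only of size $(\log q^n)^{1+o(1)}$, there is no mechanism---Weil, Burgess, Cohen's sieve, or any ``nonstructured'' hypothesis as loose as Definition~\ref{dfn7171CFNE.100S}---that forces $|T(\chi,\psi)|$ small enough to beat the main term. The paper does \emph{not} resolve this obstacle; it avoids it entirely by switching indicators.

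The paper's argument (Section~\ref{S4343PNEFF-T}) replaces the divisor-dependent indicators by the \emph{divisor-free} ones of Lemma~\ref{lem7171CFPE.150PDF} and Lemma~\ref{lem7171CFNE.150NDF}: instead of summing over $\chi,\psi$ indexed by divisors of $q^n-1$ and $x^n-1$, one writes $\Psi(\alpha)$ and $\Psi_q(\alpha)$ as geometric sums over auxiliary variables $t_1,t_2\in[0,q^n-1]$ detecting whether a discrete-logarithm equation has a solution in the units of $\Z/(q^n-1)\Z$, resp.\ $\F_q[x]/(x^n-1)$. Splitting according to $t_1=0$ or $t_1\neq 0$ and $t_2=0$ or $t_2\neq 0$ gives four pieces $N_{00},N_{01},N_{10},N_{11}$ (Lemmas~\ref{lem7171SNEFR.450-CaseI}--\ref{lem7171SNEFR.450-CaseIV}). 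The main term $N_{00}$ is your $\theta_1\theta_2\,\#\mathcal{A}$; the cross terms $N_{01},N_{10}$ are argued to vanish; and the genuine error $N_{11}$ is bounded not by any cancellation over $\mathcal{A}$ but by Lemma~\ref{lem7770.300}, an estimate of the shape
\[
\sum_{1\le t\le q^n-1}\ \sum_{\substack{1\le s\le q^n-1\\ \gcd(s,q^n-1)=1}} e^{2\pi i (s-\log_\tau\alpha)t/q^n}\ \ll\ q^n e^{-c\sqrt{\log q^n}},
\]
obtained by M\"obius inversion and the prime-number-theorem bound $\sum_{d\le x}\mu(d)/d\ll e^{-c\sqrt{\log x}}$. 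The sum over $\alpha\in\mathcal{A}$ is then taken trivially. In short, the entire saving comes from the PNT, not from the set $\mathcal{A}$, which is why no hypothesis on $\mathcal{A}$ beyond ``nonstructured'' is invoked.

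Two further discrepancies: (i) the paper uses $\Phi(x^n-1)/q^n\gg 1/\log q^n$ (Corollary~\ref{cor1771TFDD.300FF-A}), not your stronger $\gg 1/\log\log q^n$; the latter is only asserted for $q\ge 8$ (Corollary~\ref{cor1771TFDD.300FF-C}) and underlies the variant Theorem~\ref{thm4343PNEFF.050B}. This is exactly why the threshold in \eqref{eq4343PNEFF.050j1} carries a full factor $\log q^n$. (ii) Your proposed endgame via Cohen's sieve and a hypothetical $|T(\chi,\psi)|\ll(\#\mathcal{A})^{o(1)}$ is not what the paper does and, as you yourself note, cannot be carried out at this scale without a much more restrictive definition of ``nonstructured'' than the paper gives.
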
	

Equivalently, this result provides the sharper benchmark on the deterministic complexity of determining a primitive normal elements in finite fields. The algorithms for primitive normal elements investigated in {\color{red}\cite[Section 6]{VG1990}} have at best  deterministic exponential time complexity of $O(n^cq^{1/2})$ arithmetic operations, where $c>0$ is a constant.


\begin{cor} \label{cor4343PNEFF.100}\hypertarget{cor4343PNEFF.100} Let $q=p^k$ be a prime power and let $n\geq2$. The search algorithm for a primitive normal element in a finite field $\F_{q^n}$ has a deterministic time complexity of $O\left( (log q^n)^{c} \right) $ arithmetic operations, where $c>2$ is a constant. 
\end{cor}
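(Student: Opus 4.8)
The plan is to read the bound off Theorem \ref{thm4343PNEFF.050} by exhibiting an explicit, deterministically constructible pool of candidates and bounding the cost of testing them. Fix a model of $\F_{q^n}$ by an irreducible polynomial of degree $n$ over $\F_q$ (supplied with the input, or produced deterministically in $\mathrm{poly}(\log q^n)$ steps) together with the associated $\F_q$-basis. Enumerate the field elements in a fixed canonical order --- say, by lexicographic order of their coordinate vectors --- and let $\mathcal{A}$ consist of the first $N:=\lceil c_0\,(\log q^n)(\log\log q^n)^{1+\varepsilon}\rceil$ elements in that order that satisfy the defining conditions of a nonstructured set in Definitions \ref{dfn7171CFNE.100B1} and \ref{dfn7171CFNE.100B2}. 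Each membership test costs $\mathrm{poly}(\log q^n)$ operations, so assembling $\mathcal{A}$ costs $\mathrm{poly}(\log q^n)$ in total.

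By Theorem \ref{thm4343PNEFF.050}, once $q^n$ exceeds an absolute threshold $q_0$ this set satisfies $N_q(\mathcal{A})\gg(\log\log q^n)^{\varepsilon}\ge 1$, so it contains a primitive normal element; the algorithm then loops over $\alpha\in\mathcal{A}$ and applies two tests, returning the first $\alpha$ that passes both. Normality is certified by checking that $\alpha,\alpha^q,\dots,\alpha^{q^{n-1}}$ are $\F_q$-linearly independent --- equivalently, that $\gcd\!\big(x^n-1,\ \sum_{i=0}^{n-1}\alpha^{q^i}x^i\big)=1$ in $\F_{q^n}[x]$ --- which, since $n=O(\log q^n)$, costs $\mathrm{poly}(\log q^n)$ field operations; primitivity is certified, given the factorization $q^n-1=\prod_j \ell_j^{e_j}$, by verifying $\alpha^{(q^n-1)/\ell_j}\neq 1$ for every $j$ via repeated squaring, again $\mathrm{poly}(\log q^n)$ operations. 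The total work is thus $N\cdot\mathrm{poly}(\log q^n)=O\big((\log q^n)^c\big)$ for a constant $c>2$, as $N=(\log q^n)^{1+o(1)}$. Finally, the finitely many fields with $q^n\le q_0$ --- in particular the bounded exceptional cases with $n=2$ --- are disposed of by brute-force search over all of $\F_{q^n}$, which by the classical primitive normal basis theorem \cite{LS1987} always succeeds and runs in $O(1)$ time for each of them, so these do not affect the asymptotic statement.

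The step I expect to be the main obstacle --- and the place where the argument tacitly adopts the conventions of this subject --- is the primitivity test, which presupposes the prime factorization of $q^n-1$, for which no deterministic polynomial-time algorithm is known. The bound $O((\log q^n)^c)$ is therefore to be read in the model where this factorization is given with the input, equivalently where it is counted separately from the arithmetic operations in $\F_{q^n}$; without that convention the factorization step dominates. A secondary point to pin down is that the canonical enumeration actually produces $N$ nonstructured elements before running out, which follows because the conditions in Definitions \ref{dfn7171CFNE.100B1} and \ref{dfn7171CFNE.100B2} exclude only a lower-order fraction of $\F_{q^n}$.
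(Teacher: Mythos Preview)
The paper does not supply a separate proof of this corollary; it is stated immediately after Theorem~\ref{thm4343PNEFF.050} as an ``equivalent'' reformulation, with the implicit argument being exactly the one you sketch: Theorem~\ref{thm4343PNEFF.050} guarantees a primitive normal element inside any nonstructured pool of size $\gg(\log q^n)(\log\log q^n)^{1+\varepsilon}$, so one enumerates such a pool and tests each candidate with the normality test (Lemma~\ref{lem2727PNT-I}) and the primitive-element test (Lemma~\ref{lem2727PET-I}). Your write-up is more detailed than anything in the paper and follows the same route.

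You are also right to flag the factorization of $q^n-1$ as the bottleneck. The paper itself concedes, just after Lemma~\ref{lem2727PET-I}, that ``except for some special cases, all the versions of the primitive root test depend on the prime factorization of $q^n-1$,'' yet the corollary is stated without that caveat. So the honest reading is precisely the convention you adopt: the $O((\log q^n)^c)$ bound counts arithmetic operations in $\F_{q^n}$ assuming the factorization of $q^n-1$ is part of the input. Your secondary caveat about actually producing $N$ nonstructured elements is handled in the paper only by example (Examples~\ref{exa7171SBS.100Q1}--\ref{exa7171SBS.100Q3} and Definition~\ref{dfn7171CFNE.100K}); the paper does not prove a density statement either, so your treatment is at least as careful as the source.
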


The counting function for the number of primitive normal elements in a random subset $\mathcal{R}\subset\F_{q^n}$, which is not a subfield nor a linear subspace, is expected to have an asymptotic formula of the form  
\begin{equation} \label{eq4343PNEFF.050i7}
	N_q(\mathcal{R})	=\delta_q(\mathcal{R})\frac{\varphi (q^n-1)}{q^{n}}\cdot \frac{\Phi(x^n-1)}{q^{n}}\#\mathcal{R}\left (1+o(1)\right ),
\end{equation}	where $\delta_q(\mathcal{R})>0$ the density of primitive normal elements depends on the subset $\mathcal{R}$. In addition, a small nonstructured subset 
$\mathcal{S}\subset \F_{q^n}$ of cardinality 
\begin{equation}\label{eq4343PNEFF.050i11}
\#\mathcal{S}\gg (\log q^n)^{c}
\end{equation}
is expected to contain primitive normal elements. In the meantime, these remain as open problems. Merging the ideas of Artin primitive root conjecture and the Lenstra-Schoof theorem for primitive normal elements leads to the followings expectation.  

\begin{conj}\label{conj7171PNEI.300}\hypertarget{conj7171PNEI.300} Let $q$ be a prime power and let $\F_q$ be a finite field. Then, there exists infinitely many finite field extensions $\F_{q^n}$ such that a fixed element $\alpha\in \F_{q^n}-\bigcup_{d\mid n}\F_{q^d}$ such that $\alpha\ne0, \pm1,\beta^2$ and trace $\tr(\alpha)\ne0$ is primitive normal as $n\to\infty$. 
\end{conj}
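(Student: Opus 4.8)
The plan is to treat this as the finite-field analogue of Hooley's conditional proof of Artin's primitive root conjecture, fused with the sieve for primitive normal elements behind the Lenstra--Schoof theorem; an unconditional proof is not to be expected, so the realistic target is a proof conditional on the Generalized Riemann Hypothesis (equivalently, an effective Chebotarev density theorem) for the $L$-functions of the relevant Kummer covers, together with an unconditional density-one surrogate. First I would make the quantifier precise --- fix a ``small'' element $\alpha$ in a sense that persists across the tower, with $\F_q(\alpha)=\F_{q^n}$ --- and write the indicator that $\alpha$ is primitive normal in $\F_{q^n}$ as a product of two M\"obius-type sieves. Primitivity is detected by
\begin{equation}
\mathbf{1}[\alpha\ \text{primitive}]=\frac{\varphi(q^n-1)}{q^n-1}\sum_{d\mid q^n-1}\frac{\mu(d)}{\varphi(d)}\sum_{\ord(\chi)=d}\chi(\alpha),
\end{equation}
a sum over multiplicative characters of $\F_{q^n}^{\times}$, and normality is detected by the exact $\F_q[x]$-module analogue, a sum over monic divisors $g\mid x^n-1$ in $\F_q[x]$ weighted by the M\"obius function of $\F_q[x]$, using that $\alpha$ is normal iff its $\F_q[x]$-annihilator (with $x$ acting as the $q$-power Frobenius) is exactly $x^n-1$. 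Multiplying the two sieves and summing over admissible $n\le N$ produces a main term of size $\sum_{n\le N}\frac{\varphi(q^n-1)}{q^n-1}\cdot\frac{\Phi(x^n-1)}{q^n}$, corrected by inclusion--exclusion for the interaction of the two structures (the analogue of the entanglement corrections familiar from the Artin circle of problems), plus an error term built from Weil bounds on the mixed character sums that appear whenever a nontrivial $\chi$ or a nontrivial normality character is present.

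Next I would truncate both M\"obius sums at parameters $\xi_1$ (multiplicative side) and $\xi_2$ (module side): the low ranges reproduce the density; the high ranges are controlled once the number of prime factors of $q^n-1$ and the number of irreducible factors of $x^n-1$ are bounded on average; and the medium ranges --- counting, for a prime $\ell\mid q^n-1$, how often $\alpha$ is an $\ell$-th power in $\F_{q^n}^{\times}$, and the analogous question for irreducible factors of $x^n-1$ --- are handled by the GRH-conditional effective Chebotarev estimate applied to $\F_{q^n}(\alpha^{1/\ell})$, with Hooley's argument at the top of the medium range. The side hypotheses in the statement are exactly the unavoidable local obstructions that must be assumed away and then shown to be the only ones: when $q$ is odd a square $\alpha=\beta^2$ generates a proper subgroup and is never primitive, while any $\alpha$ with $\tr(\alpha)=0$ satisfies the nontrivial linear relation $\alpha+\alpha^q+\cdots+\alpha^{q^{n-1}}=0$ among its Frobenius conjugates and hence is never normal; the conditions $\alpha\ne0,\pm1$ are then subsumed by $[\F_q(\alpha):\F_q]=n\ge2$. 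Away from these, every local density is positive, and the remaining point is to show that the full product over all moduli stays bounded away from $0$ along a positive proportion (in particular, infinitely many) of $n\le N$, which yields infinitely many admissible extensions.

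The main obstacle is the one that keeps Artin's conjecture open: controlling the medium-range moduli $\ell\mid q^n-1$ without GRH, i.e. ruling out a conspiracy in which $\alpha$ is an $\ell$-th power for many small $\ell$ simultaneously, which would force the density to vanish along a thin sequence of $n$. Unconditionally one should expect only a Gupta--Murty / Heath-Brown type statement --- the conclusion for all but finitely many exceptional ``rules'' for $\alpha$, or for all but a density-zero set of $n$ --- obtained by feeding the Kummer extensions into the large sieve. A second, genuinely new difficulty is the normality side: it requires the $\F_q[x]$-module analogue of Hooley's machinery, namely effective equidistribution for the factorization type of $x^n-1$ over $\F_q$ coupled with the module order of $\alpha$, and --- the crux --- an independence (or explicitly computed entanglement) statement relating the multiplicative order of $\alpha$ in $\F_{q^n}^{\times}$ to its module order in $\F_q[x]/(x^n-1)$, which has no classical precedent and would demand a study of the compositum of the two families of covers. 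For these reasons I regard a clean unconditional proof as beyond present technology, with a GRH-conditional proof plus an unconditional density-one version as the reasonable goal.
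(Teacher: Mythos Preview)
The statement you are attempting to prove is labelled \emph{Conjecture} in the paper, and the paper offers no proof of it. Immediately before the conjecture the author writes that ``these remain as open problems,'' and after stating it gives only a heuristic shape for the counting function $N_q(\alpha,x)$ in analogy with Carlitz's density result. Nowhere in the body of the paper is the conjecture revisited or proved, conditionally or otherwise; the main theorem (Theorem~\ref{thm4343PNEFF.050}) concerns a different question---existence of primitive normal elements in small nonstructured subsets for \emph{fixed} large $q^n$---and its proof machinery (the divisor-free characteristic functions of Sections~\ref{S7171CFPE-P} and~\ref{S7171CFNE-N}) is not applied to the conjecture.

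Your proposal is therefore not comparable to anything in the paper: you have sketched a plausible Hooley-type programme for a GRH-conditional result, together with the expected unconditional density-one surrogate, and you have correctly identified the two structural obstructions (squares on the multiplicative side, zero trace on the additive side) that the hypotheses are designed to exclude. That is a reasonable outline for attacking an open problem, but it is not a proof, and the paper does not claim one either. If your assignment was to reproduce the paper's argument, there is none to reproduce; if it was to assess the status of the statement, the correct answer is that it is stated as a conjecture and left open.
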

The asymptotic formula for the counting function of the number of finite field extensions should be of the form  
\begin{align} \label{eq4343PNEFF.050i9}
	N_q(\alpha,x)	&=\#\{n\leq x:\alpha \in \F_{q^n} \text{ is primitive normal}\}\\[.3cm]
	&=\delta_q(\alpha)\frac{\varphi (q^n-1)}{q^{n}}\cdot \frac{\Phi(x^n-1)}{q^{n}}\left (1+o(1)\right )x,\nonumber
\end{align}	
where $\delta_q(\alpha)>0$ the density of finite field extensions depends on the fixed element $\alpha\in \F_{q^n}$, this similar to the result in {\color{red}\cite[Theorem 2]{CL1952A}}.	\\

The proof of \hyperlink{thm4343PNEFF.050}{Theorem} \ref{thm4343PNEFF.050} appears in \hyperlink{S4343PNEFF-T}{Section} \ref{S4343PNEFF-T}. The background supporting materials appears in \hyperlink{S7171CFNE-DNE}{Section} \ref{S7171CFNE-DNE} to \hyperlink{S47171SNEFR-C}{Section} \ref{S47171SNEFR-C}.
\section{Notation, Definitions and Basic Concepts} \label{S7171CFNE-DNE}\hypertarget{S7171CFNE-DNE}
The sets $\N=\{0,1,2,3,\ldots\}$, $\Z=\{\ldots, -3,-2,-1, 0,1,2,3,\ldots\}$ and $\tP=\{2,3,5,\ldots\}$ denote the sets of natural numbers, the sets of integers and the set of prime numbers respectively. The letters $p,q,r\in\mathbb{P}$ usually denote arbitrary prime numbers, and the letters $a,b,c,k,m,n\in \mathbb{N}$ usually denote arbitrary integers.

\begin{enumerate}
	\item The symbol $\log x=\ln x$ denotes the natural logarithm.
	\item Let $f, g: [x_0,\infty]\longrightarrow \R$ be a pair of functions, and assume $g(x)>0$. The big O notation is defined by
	\begin{equation}\label{eq2266N.120}
		f(x)=O(g(x))  \quad  \Longleftrightarrow  \quad|f(x)|\leq c g(x)
	\end{equation}
	for some constant $c>0$ as $x\to\infty$.
	\item The symbol $\ll$ is defined by
	\begin{equation}\label{eq2266N.100}
		f(x)\ll g(x)  \quad  \Longleftrightarrow  \quad|f(x)|\leq c g(x)
	\end{equation}
	for some constant $c>0$ as $x\to\infty$. 
	\item The symbol $\gg$ is defined by
	\begin{equation}\label{eq2266N.100}
		f(x)\gg g(x)  \quad  \Longleftrightarrow  \quad|f(x)|\geq c g(x)
	\end{equation}
	for some constant $c>0$ as $x\to\infty$. 
		\item The symbol $f\asymp g$ is defined by
	\begin{equation}\label{eq2266N.100}
		c_0g(x)\leq f(x)\leq c_1g(x)  \qquad  \text{ and }\qquad c_2g(x)\leq f(x)\leq c_3g(x)
	\end{equation}
	for some constants $c_0,c_1c_2,c_3>0$ as $x\to\infty$. 
\end{enumerate}

\subsection{Small Elements and Subsets}	
There are various methods for defining orders and topologies on the rings of polynomials $\F_q[x]$. Some of the possible metrics, orders and topologies are induced by  
\begin{itemize}
\item the Hamming weight of polynomials,
\item the height of polynomials,
\item the nonArchimedean valuation of polynomials.
\end{itemize}
The Hamming weight function is widely used in complexity theory and cryptography, the height function is widely used in the number theory and the last one is widely used in nonArchimedean analysis. The Hamming weight function and the height function induce equivalent metrics and equivalent discrete topologies. While the nonArchimedean valuation induces a nonArchimedean metric and a nondiscrete topology on the ring of polynomials $\F_q[x]$, more generally on the quotient ring $\F_q((x))$. \\

Let $\F_{q^n}\cong \F_q[x]/(f(x))$ be a representation of the finite field, where $ f(x)\in \F_q[x]$ is an irreducible polynomial of degree $\deg f=n$, and let $N:\F_q\longrightarrow \F_p$ be a norm function.\\

\begin{dfn}\label{dfn7171CFNE.100B1}\hypertarget{S7171CFNE.100B1} {\normalfont The Hamming weight of a polynomial $r(x)\in \F_q[x]$ of degree $d=\deg r$ is defined by 
\begin{align}
	w(r)=\#\{i\ne0:r(x)=a_0+a_1x+a_2x^2+\cdots+a_{d}x^{d}\}.
\end{align} 
 An element $\alpha\in\F_{q^n}$ is called \textit{small} if it has a polynomial representation of Hamming weight $w(r)\ll (\log\log q^n)^{c}$, where $c>2$ is constant. A \textit{small Hamming subset} of elements in a finite field is a subset of small elements.
	}
\end{dfn}	
\begin{prop} \label{prop7171CFNE.200C}\hypertarget{prop7171CFNE.200C}  The Hamming weight induces a metric function on the ring $\F_q[x]$. If $r,s,u\in \F_q[x]$ then
	\begin{enumerate}[font=\normalfont, label=(\roman*)]
		\item $\displaystyle w(r,s)\geq0$\tabto{9cm}nonnegative,
		\item $\displaystyle w(r,s)=0$ if and only if $r=s$\tabto{9cm}unique identity,
		\item $\displaystyle w(r,s)=w(s,r)$\tabto{9cm}symmetry,
		\item $\displaystyle w(r,s)\leq w(s,u)+w(u,s)$\tabto{9cm}triangle inequality.
	\end{enumerate}
\end{prop}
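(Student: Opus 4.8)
The plan is to recognize the claimed metric as the translation-invariant distance attached to the weight $w$ on the additive group of $\F_q[x]$: for $r,s\in\F_q[x]$ one sets $w(r,s):=w(r-s)$, the number of nonzero coefficients of $r-s$, and it is this two-variable function whose axioms (i)--(iv) are to be verified (with the evident correction that (iv) should read $w(r,s)\le w(r,u)+w(u,s)$). All four properties then reduce to two elementary observations about the support $\operatorname{supp}(f)=\{\,i:a_i\ne0\,\}$ of $f=\sum_i a_ix^i$: namely $w(f)=\#\operatorname{supp}(f)$, together with
\[
\operatorname{supp}(f+g)\subseteq \operatorname{supp}(f)\cup\operatorname{supp}(g),\qquad \operatorname{supp}(\lambda f)=\operatorname{supp}(f)\ \text{ for all }\ \lambda\in\F_q^{\times}.
\]

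First I would dispose of (i)--(iii). Nonnegativity is immediate because $w(r,s)$ is the cardinality of a finite set. For (ii), $w(r,s)=0$ means $\operatorname{supp}(r-s)=\varnothing$, i.e.\ $r-s=0$, i.e.\ $r=s$; conversely $w(r,r)=w(0)=0$. For symmetry, $s-r=(-1)(r-s)$ with $-1\in\F_q^{\times}$, hence $\operatorname{supp}(s-r)=\operatorname{supp}(r-s)$ and so $w(s,r)=w(r,s)$; this is the only place where the field structure (rather than merely a commutative ring) is used, and it works precisely because $-1$ is a unit.

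The substantive step is the triangle inequality. Writing $r-s=(r-u)+(u-s)$ and applying the support inclusion for sums gives
\[
w(r,s)=\#\operatorname{supp}\bigl((r-u)+(u-s)\bigr)\le \#\bigl(\operatorname{supp}(r-u)\cup\operatorname{supp}(u-s)\bigr)\le w(r,u)+w(u,s),
\]
which is just subadditivity of $w$ transported to the distance. I expect no genuine obstacle here: the proposition is the standard fact that a weight on an abelian group which vanishes only at $0$, is invariant under negation, and is subadditive induces a metric, and the Hamming weight on $\F_q[x]$ manifestly has these three properties. The only points worth isolating in a clean write-up are the two displayed support facts; the rest is bookkeeping. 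It is also worth adding one sentence that the resulting topology is discrete, since $w(r,s)\ge1$ whenever $r\ne s$, consistent with the discussion preceding the statement.
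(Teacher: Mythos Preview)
Your argument is correct. You rightly interpret $w(r,s)=w(r-s)$ as the number of nonzero coefficients of $r-s$, and your verification of the four metric axioms via the support inclusion $\operatorname{supp}(f+g)\subseteq\operatorname{supp}(f)\cup\operatorname{supp}(g)$ and the invariance $\operatorname{supp}(-f)=\operatorname{supp}(f)$ is clean and complete. Your observation that the stated inequality in (iv) must be read as $w(r,s)\le w(r,u)+w(u,s)$ is also correct; as written in the paper the right-hand side is $w(s,u)+w(u,s)=2w(s,u)$, which is not the triangle inequality.

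There is nothing to compare your approach against: the paper does not supply a proof of this proposition in the body of the text, but instead relegates it to the problems section (Exercise \ref{exe9955MS.011}), where the reader is asked to prove it with the hint $d(r,s)=H(s-r)$. Your write-up is exactly what that exercise is looking for.
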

The order defined by the Hamming weight of polynomials seems to be independent on the characteristic $p\geq2$ of the prime finite field $\F_p\subseteq \F_q$. The order of set inclusion with respect to the Hamming metric is a nonsymmetric equivalent relation of ring $\F_q[x]$. A small Hamming subset is an open set in the discrete topology of the polynomials ring $\F_q[x]$ corresponding to the Hamming weight $w(r)$. \\

\begin{dfn}\label{dfn7171CFNE.100B2}\hypertarget{S7171CFNE.100B2} {\normalfont The height of a polynomial $r(x)\in \F_q[x]$ of degree $d=\deg r$ is defined by 
		\begin{align}
			h(r)=\max\{N(a_i):r(x)=a_0+a_1x+a_2x^2+\cdots+a_{d}x^{d}\}.
		\end{align} 
		An element $\alpha\in\F_{q^n}$ is called \textit{small} if it has a polynomial representation of height $h(r)\ll (\log\log q^n)^{c}$, where $c>2$ is constant. A \textit{small subset} of elements in a finite field is a subset of small elements.
	}
\end{dfn}	
\begin{prop} \label{prop7171CFNE.200D}\hypertarget{prop7171CFNE.200D}  The height induces a metric function on the ring $\F_q[x]$. If $r,s,u\in \F_q[x]$ then
	\begin{enumerate}[font=\normalfont, label=(\roman*)]
		\item $\displaystyle h(r,s)\geq0$\tabto{9cm}nonnegative,
		\item $\displaystyle h(r,s)=0$ if and only if $r=s$\tabto{9cm}unique identity,
		\item $\displaystyle h(r,s)=h(s,r)$\tabto{9cm}symmetry,
		\item $\displaystyle h(r,s)\leq h(s,u)+h(u,s)$\tabto{9cm}triangle inequality.
	\end{enumerate}
\end{prop}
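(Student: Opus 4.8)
The plan is to read the proposition in the only way that makes it true: the height $h$ of Definition~\ref{dfn7171CFNE.100B2} induces the distance $d(r,s):=h(r-s)$ on $\F_q[x]$, and the four displayed items are the metric axioms written in the shorthand $h(r,s)=h(r-s)$, with the triangle inequality evidently meant to read $h(r,s)\le h(r,u)+h(u,s)$. Everything then reduces to four pointwise properties of the underlying map $N:\F_q\to\F_p$, regarded as taking nonnegative integer values via the canonical representatives $\{0,1,\dots,p-1\}$: (a) $N(c)\ge0$; (b) $N(c)=0$ if and only if $c=0$; (c) $N(-c)=N(c)$; (d) $N(c_1+c_2)\le N(c_1)+N(c_2)$.

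First I would fix $N$ concretely: writing $\F_q=\F_{p^k}$ in a fixed $\F_p$-basis, let $N(c)$ be the largest canonical representative among the $k$ coordinates of $c$, so that $N$ actually separates the points of $\F_q$. Then for $r=\sum_i a_ix^i$ and $s=\sum_i b_ix^i$ one has $d(r,s)=\max_i N(a_i-b_i)$, and the axioms are immediate. Item~(i) is a maximum of nonnegatives, by~(a). Item~(ii): $\max_i N(a_i-b_i)=0$ forces $N(a_i-b_i)=0$, hence $a_i=b_i$, for every $i$, by~(b). Item~(iii) is~(c) applied coefficientwise. For item~(iv), insert $u=\sum_i c_ix^i$, write $a_i-b_i=(a_i-c_i)+(c_i-b_i)$, apply~(d) to get $N(a_i-b_i)\le N(a_i-c_i)+N(c_i-b_i)\le d(r,u)+d(u,s)$, and take the maximum over~$i$. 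Verifying (a)--(d) for this $N$ is elementary: (a) and (c) are clear, (b) is exactly the statement that all coordinates of $c$ vanish iff $c=0$, and (d) holds because the representative of $x+y$ modulo $p$ never exceeds $x+y$ when $0\le x,y\le p-1$ (a wraparound only decreases it).

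The one place where the statement could genuinely fail, and hence the only point I would treat with care, is the choice of $N$ entering property~(b), definiteness. If $N$ is taken to be the algebraic field norm, or the trace, $\F_q\to\F_p$, then $N$ is many-to-one on $\F_q^{\times}$ as soon as $k\ge2$, so $N(c)=0$ no longer implies $c=0$ and $h$ becomes only a pseudometric. The proof must therefore either restrict to separating choices of $N$ --- equivalently, transport the $\ell^{\infty}$ metric on the $\F_p$-coefficient vectors to $\F_q[x]$ --- or else weaken the assertion to that of a pseudometric. With a separating $N$ fixed, the remainder is bookkeeping, so I would organize the write-up as: reduce the proposition to (a)--(d) for $N$; verify (a)--(d) for the coordinatewise-representative map; then run the four short deductions above.
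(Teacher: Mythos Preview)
The paper does not actually supply a proof of this proposition: it is stated without argument in Section~\ref{S7171CFNE-DNE} and then assigned as Exercise~\ref{exe9955MS.021}, whose hint ``$d(r,s)=h(s-r)$'' is exactly your interpretation of the shorthand $h(r,s)$. So there is nothing in the paper to compare your argument against beyond that one line, and on that point you agree.

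Your reduction to the four pointwise properties (a)--(d) of $N$ is the natural route and is correct once one fixes a separating $N$. You are also right to flag the definiteness issue: the paper only says ``let $N:\F_q\longrightarrow\F_p$ be a norm function,'' and with the algebraic field norm this is many-to-one for $k\ge2$, so item~(ii) would fail and one gets only a pseudometric. Your fix---take $N$ to be the maximum of the canonical $\F_p$-representatives of the coordinates of $c$ in a fixed basis---repairs this and makes the proof go through. It is worth saying explicitly in your write-up that you are choosing $N$ this way (and correcting the evident typo $h(s,u)+h(u,s)$ to $h(r,u)+h(u,s)$), since the paper itself leaves $N$ ambiguous.
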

The order defined by the height of polynomials seems to be primarily dependent on the characteristic $p\geq2$ of the prime finite field $\F_p\subset \F_q$. The simplest nontrivial definition of an order on $\F_q[x]$ by the height metric appears to be for $p\geq p_0$, where $p_0\geq5$.\\

The order of set inclusion with respect to the height metric is a nonsymmetric equivalent relation of ring $\F_q[x]$. A small subset is an open set in the discrete topology of the polynomials ring $\F_q[x]$ corresponding to the height metric $h(r)$.

\begin{dfn}\label{dfn7171CFNE.100S}\hypertarget{S7171CFNE.100S} {\normalfont  A \textit{small nonstructure subset} of elements in a finite field is a subset of small elements of height $H$ which do not have the algebraic properties of as subgroups nor linear subspaces of $\F_{q^n}$.
	}
\end{dfn}	
There many ways of constructing nonstructured subsets in finite fields, using either the Hamming metric or the height metric.
\begin{dfn}\label{dfn7171CFNE.100K}\hypertarget{S7171CFNE.100K} {\normalfont Let $H>0$ be a real number. A nonstructured subset $\mathcal{A}=\mathcal{A}_d(H)\subset \F_{q^n}$ is defined by
		\begin{equation} \label{eq4343PNEFF.050K1}
			\mathcal{A}_d(H)	=\{ \alpha =a_0+a_1x+a_2x^2+\cdots+a_{d}x^{d}:(a_0,\ldots,a_d)\in \mathcal{B}_d(H)\},
		\end{equation}	
		where the set of relatively prime coefficients of height $H$ is defined by
		\begin{equation} \label{eq4343PNEFF.050K2}
			\mathcal{B}_d(H)	=\{ (a_0,\ldots,a_d)\in \Z^{d+1}:\gcd(a_0,\ldots,a_d)=1 \text{ and }|a_i|\leq H\}.
		\end{equation}	
		where $2\leq d<n$ is a small integer. 
	}
\end{dfn}	
For large $q^n$, the uniform distribution of the primitive normal elements in the finite field $\F_{q^n}$ guaranteed that a small nonstructured subset as the simple ball in \hyperlink{exa7171SBS.100Q1}{Example} \ref{exa7171SBS.100Q1} contains a primitive normal element.  
\begin{exa}\label{exa7171SBS.100Q1}\hypertarget{exa7171SBS.100Q1} {\normalfont Let $q=23$ and let $r(x)\in \F_q[x]$ be a fixed polynomial such that $r(0)=0$. A simple ball of radius $H=3$ centered around $r(x)$ with respect to the Hamming metric consists of the subset of polynomials  
		\begin{align}\label{eq4343PNEFF.050D7}
			\mathbb{B}_{3}(r(x))&=\{s(x)=r(x)+c:H(c)\leq 3\}\\[.2cm]
			&=\{r(x)+1,\;r(x)+ 2,\; r(x)+3,\; r(x)+4,\; r(x)+5,\; r(x)+6,\; r(x)+7, \nonumber\\[.2cm]
			&\hskip .25 in \; r(x)+8, \; r(x)+9,\;r(x)+10,\; r(x)+11,\;r(x)+12,\; r(x)+11,\;r(x)+14\}\nonumber,
		\end{align}
where $c\in\F_q$ and its the Hamming metric is 
\begin{align}\label{eq4343PNEFF.050D8}
H(c)&=H(s(x)-r(x))\\[.2cm]
	&=\#\{i\geq 0:c=c_4c_3c_2c_1c_0 \text{ and }c_i=0,1\}\leq 3\nonumber,
\end{align} where $c=c_4c_3c_2c_1c_0$ is the binary expansion of $c\in \F_p$. More generally, a simple ball of radius $H>0$ centered at the polynomial $r(x)$ is defined by
\begin{align}\label{eq4343PNEFF.050D9}
	\mathbb{B}_{H}(r(x))&=\{s(x)\in \F_q[x]:H(s(x)-r(x))\leq H\}\\[.2cm]
	&=\{s(x)=r(x)+c:H(c)\leq H\}\nonumber.
\end{align}
For any $H\leq n$, this subset does not has a one-to-one correspondence with any subfield $\F_{23^d}\subseteq \F_{23^n}$. Thus, the ball $\mathbb{B}_{H}(r(x))$ of radius $H>0$ is a nonstructured subset of $ \F_{23^n}$. 
	}
\end{exa}
\begin{exa}\label{exa7171SBS.100Q2}\hypertarget{exa7171SBS.100Q2}{\normalfont Let $q=2,3$.  
		A unit sphere in $\F_q[x]$ consisting of polynomials of degree $d=3$ is the subset
		\begin{align}\label{eq4343PNEFF.050D3}
			\mathbb{S}_{1,3}(\F_q)&=\{r(x)=a_0+a_1x+\cdots+a_3x^{3}:h(r)=\max\{|N(a_i)|\}=1\}\\[.2cm]
			&=\{1,\;x,\; x+1,\; x^2,\; x^2+1,\; x^2+x,\; x^2+x+1,\; x^3,\; x^3+1,\;x^3+x,\;  \nonumber\\[.2cm]
			&\hskip .25 in  x^3+x^2,\;x^3+x+1,\; x^3+x^2+1,\;x^3+x^2+x,\;  x^3+x^2+x+1\}\nonumber,
		\end{align}
		where the norm $N(a)=a^{(q^d-1)/q-1}=\pm1$. This subset has a one-to-one correspondence with the subfield $\F_{2^d}$. Thus, the unit sphere $\mathbb{S}_{1,d}(\F_2)= \F_{2^d}$ is a structured subset of $ \F_{2^n}$ for any $d\mid n$. The same phenomenon is true for $\F_{3^d}\subset \F_{3^n}$.
	}
\end{exa}
\begin{exa}\label{exa7171SBS.100Q3}\hypertarget{exa7171SBS.100Q3}{\normalfont Let $q=19$ and $H=1$. 
		The subset of polynomials of degree 2 and height $h(r)=1$ in $\F_q[x]$ is  
		\begin{align}\label{eq4343PNEFF.050D2}
			\mathcal{A}_d(H)&=\{r(x)=a_0+a_1x+a_2x^{2}:h(r)=\max\{|N(a_i)|\}=1\}\\[.2cm]
			&=\{\pm1,\;\pm x,\; \pm x\pm 1,\; \pm x^2,\; \pm x^2\pm1,\; \pm x^2\pm x,\; \pm x^2\pm x\pm1\}\nonumber.
		\end{align}
		Clearly, this subset does not has a one-to-one correspondence with the finite field $\F_{19^d}$. Thus, the small subset $\mathcal{A}_d(H)\ne \F_{19^d}$ is a nonstructured subset of $ \F_{19^n}$ for any $d\mid n$.
	}
\end{exa}
\subsection{Norm and Trace Functions}	
\begin{dfn}\label{dfn7171CFNE.200t}\hypertarget{S7171CFNE.200t} {\normalfont Let $q=p^k$ be a prime power and let $\F_{q^n}$ be a finite field extension. The absolute trace and norm functions
		\begin{align}\label{eq7171CFNE.200t}
			\tr&:	\F_{q^n}\longrightarrow \F_p,\\[.2cm]
			N&:	\F_{q^n}\longrightarrow \F_p\nonumber
		\end{align}			
		are defined by
		\begin{align}\label{eq7171CFNE.200t2}
			\tr(\alpha)&=\alpha+\alpha^{p}+\alpha^{p^2}+\cdots+\alpha^{p^{kn-1}},\\[.2cm]
			N(\alpha)&=\alpha^{p+p^2+\cdots+p^{kn-1}}\nonumber
		\end{align}
		respectively.
	}
\end{dfn}	

\subsection{Linearized Polynomials}	
\begin{dfn}\label{dfn7171CFNE.200m}\hypertarget{S7171CFNE.200m} {\normalfont The linearized polynomial $L_f(x)$ associated to a polynomial $f(x)=a_0+a_1x+a_2x^2+\cdots+a_nx^n\in\F_q[x]$ is defined by 
		\begin{equation}\label{eq7171CFNE.200m}
			L_f(x)=a_0x+a_1x^p+a_2x^{p^2}+\cdots+a_nx^{p^{n}}.
		\end{equation}			
	}
\end{dfn}

The linearized polynomial is often denoted by $L_r(\alpha)=r\circ\alpha$ or by $L_r(\alpha)=r(x)(\sigma)(\alpha)$ or by some other notation.\\

The Frobenious map $\sigma :\F_{q^n}\longrightarrow \F_{q}$ on a finite field is usually defined by the power map $\sigma(x)=x^q$, and a $k$-fold iterate is written as $\sigma_k(x)=x^{q^k}$. The finite field is the fixed field of the Frobenious map action on the algebraic closure $\overline{\F_{q}}=\bigcup_{n\to\infty} \F_{q^n}$.
\subsection{Additive Characters and Additive Orders}
\begin{dfn}\label{dfn2727PNT.200C}\hypertarget{dfn2727PNT.200C} {\normalfont Let $q=p^k$ be a prime power and let $\F_q$ be a finite field. Assume $f(x)\in \F_q[x]$ is irreducible of degree $\deg f=n$. The \textit{additive order} of an element $\alpha\in \F_q[x]/f(x)$ is defined by
		$$\Ord_{q}\alpha =\min\{r(x)\mid x^n-1:r\circ\alpha=0\}.$$		
		An element of maximal order $\Ord_{q}\alpha=x^n-1$ is called a \textit{normal element}.}
\end{dfn}			
\begin{dfn}\label{dfn7171CFNE.200i}\hypertarget{S7171CFNE.200i} {\normalfont An additive character $\psi:\F_{q^n}\longrightarrow \C$ is an exponential function of the form 
		\begin{equation}\label{eq7171CFNE.200i}
			\psi(\alpha)=e^{i2 \pi  \frac{\tr(c\alpha)}{p}},
		\end{equation}			
		where $c\in\F_{q^n}$ is a constant. The trivial additive character is denoted by $\psi_0(\alpha)=1$.		
	}
\end{dfn}	
\begin{dfn}\label{dfn7171CFNE.200j}\hypertarget{S7171CFNE.200j} {\normalfont The additive order $\Ord_q\psi=d(x)$ of an additive character $\psi$ is a divisor $d(x)\mid x^n-1$ is a polynomial of degree $\deg d\mid n$. In particular, the trivial character has additive order $\Ord_q\psi_0=1$ and a primitive character has additive order $\Ord_q\psi_c=x^n-1$.
	}
\end{dfn}

\subsection{Multiplicative Characters and Multiplicative Orders}	
\begin{dfn}\label{dfn2727PED.200P}\hypertarget{dfn2727PED.200P} {\normalfont Let $q=p^k$ be a prime power and let $\F_{q^n}$ be a finite field. The \textit{multiplicative order} $\ord_{q} \alpha$ of an element $\alpha\in\F_{q^n}$ is defined by 
		\begin{equation}\label{eq2727PET.150d}
			\ord_{q}\alpha =\min\{d\mid q^n-1:\alpha^d=1\}.
		\end{equation}	
		An element of maximal order $\ord_{q}\alpha=q^n-1$ is called a \textit{primitive element}.}
\end{dfn}
\begin{dfn}\label{dfn7171CFPE.200i}\hypertarget{S7171CFPE.200i} {\normalfont A multiplicative character $\chi:\F_{q^n}\longrightarrow \C$ is an exponential function of the form 
		\begin{equation}\label{eq7171CFPE.200i}
			\chi(\alpha)=e^{i2 \pi  \frac{N(c\alpha)}{q^n-1}},
		\end{equation}			
		where $N(\alpha)$ is the norm function and $c\in\F_{q^n}$ is a constant.. The trivial multiplicative character is denoted by $\chi_0(\alpha)=1$.		
	}
\end{dfn}	
\begin{dfn}\label{dfn7171CFPE.200j}\hypertarget{S7171CFNE.200j} {\normalfont The multiplicative order $\ord_q\chi=d$ of a multiplicative character $\chi$ is a divisor $d\mid q^n-1$. In particular, the trivial multiplicative character has  multiplicative order $\ord_q\chi_0=1$ and a primitive  multiplicative character has  multiplicative order $\ord_q\psi_c=q^n-1$.
	}
\end{dfn}

\begin{lem}\label{lem7171CFNE.200A}\hypertarget{lem7171CFNE.200A}  The set of additive characters forms an additive group $\mathscr{G}_+=\{\psi_c:c\in \F_{q^n}\}$ of cardinality $\#\mathscr{G}_+=q^n$ characters, and any subgroup $\mathscr{H}_+\subset \mathscr{G}_+$ contains $\#\mathscr{H}_+=\Psi_q(d(x))=q^{\deg d}$ characters, where both $d(x)\mid x^n-1$ and $d\mid n$.
\end{lem}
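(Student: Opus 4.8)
The plan is to realise $\mathscr{G}_+$ as a copy of the additive group $(\F_{q^n},+)$, equip the latter with its natural module structure over $R:=\F_q[x]/(x^n-1)$ coming from the Frobenius map, and then read off the subgroup lattice from the ideals of $R$. First I would verify that $c\mapsto\psi_c$ is a homomorphism from $(\F_{q^n},+)$ to the group $\mathscr{G}_+$ under pointwise multiplication: this is immediate from $\F_p$-linearity of the trace, since $\psi_c\,\psi_{c'}=\psi_{c+c'}$. It is injective because the trace form $(\gamma,\alpha)\mapsto\tr(\gamma\alpha)$ on the separable extension $\F_{q^n}/\F_p$ is nondegenerate, so $\tr(c\alpha)=0$ for every $\alpha$ forces $c=0$; in particular $\#\mathscr{G}_+\ge q^n$. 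On the other hand a character is determined by its values on an $\F_p$-basis of $\F_{q^n}$, each value a $p$-th root of unity, so $\#\mathscr{G}_+\le p^{kn}=q^n$. Hence $c\mapsto\psi_c$ is an isomorphism and $\#\mathscr{G}_+=q^n$.

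Next I would put on $\F_{q^n}$ the structure of an $R$-module by letting $x$ act as $\sigma:\alpha\mapsto\alpha^q$, so that $r(x)\cdot\alpha=L_r(\alpha)=r\circ\alpha$, where $L_r$ is the linearized polynomial taken with respect to the $q$-power Frobenius; this is well defined since $\sigma^n=\mathrm{id}$. By the classical normal basis theorem there is a normal element, which is a free $R$-generator, so $\F_{q^n}\cong R$ as $R$-modules. Consequently the Frobenius-stable subgroups of $(\F_{q^n},+)$ are precisely the $R$-submodules, i.e. the ideals of $R$, i.e. the sets $(e(x))$ with $e(x)\mid x^n-1$ monic. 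Writing $d(x):=(x^n-1)/e(x)$, one checks directly that $(e(x))=\{\alpha: d(x)\cdot\alpha=0\}=\ker L_d$, which by Definition \ref{dfn2727PNT.200C} is exactly the set of elements whose additive order divides $d(x)$. Transporting through $c\mapsto\psi_c$, the corresponding subgroup $\mathscr{H}_+\subseteq\mathscr{G}_+$ is the set of characters of additive order dividing $d(x)$ in the sense of Definition \ref{dfn7171CFNE.200j}, and every Frobenius-stable subgroup arises this way from a unique monic divisor $d(x)\mid x^n-1$.

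It then remains to count $\ker L_d$. The $q$-linearized polynomial $L_d(x)=\sum_{i\le\deg d}a_ix^{q^i}$, with $d(x)=\sum a_ix^i$ and $a_{\deg d}=1$, has degree $q^{\deg d}$; it is separable because $L_d'(x)=a_0=d(0)\ne0$, as $0$ is not a root of $x^n-1$. Moreover, from $x^n-1=e(x)d(x)$ and the identity $L_{fg}=L_f\circ L_g$ we obtain $x^{q^n}-x=L_{x^n-1}(x)=L_e(L_d(x))$, which is divisible by $L_d(x)$; hence $L_d$ splits completely over $\F_{q^n}$. Therefore $\ker L_d$ is an $\F_q$-subspace of $\F_{q^n}$ with exactly $q^{\deg d}$ elements, so $\#\mathscr{H}_+=\Psi_q(d(x)):=q^{\deg d}$. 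Taking in particular $d(x)=x^m-1$ with $m\mid n$ yields $\ker L_d=\F_{q^m}$ and recovers the subfield case, with $\deg d=m\mid n$ — the situation described by the condition $d\mid n$ in the statement.

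The main obstacle here is the transfer of the module structure to $\mathscr{G}_+$ and the resulting identification of ``additive order of $\psi_c$'' with ``additive order of $c$'': the adjoint of $\sigma$ for the trace form is $\sigma^{-1}=\sigma^{n-1}$, which is multiplication by the unit $x^{n-1}\equiv x^{-1}$ of $R$, so the dual $R$-module structure on $\mathscr{G}_+$ coincides with the one on $\F_{q^n}$ only up to the reciprocal automorphism $r(x)\mapsto x^{\deg r}\,r(1/x)$ of $R$; since this automorphism fixes $x^n-1$ and preserves degrees, the cardinality count is unaffected, but the bijection between subgroups and divisors must be set up carefully. One should also read ``any subgroup'' as ``any Frobenius-stable subgroup'', equivalently any subgroup equal to the full set of characters of some additive order $d(x)$, since $(\F_{q^n},+)$ also has subgroups that are not $\sigma$-stable. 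Granting the normal basis theorem and the standard facts $L_{fg}=L_f\circ L_g$, $\deg L_f=q^{\deg f}$, and ``$L_f$ is separable $\iff f(0)\ne0$'', the remaining verifications are routine.
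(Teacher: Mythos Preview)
The paper states this lemma without proof, so there is no argument to compare against. Your proof is correct and considerably more careful than the lemma itself deserves: you set up the isomorphism $c\mapsto\psi_c$ via nondegeneracy of the trace form, transport the $\F_q[x]/(x^n-1)$-module structure through it, invoke the normal basis theorem to identify submodules with ideals, and count $\ker L_d$ by separability plus splitting over $\F_{q^n}$. All of these steps are sound.

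You are also right to flag the two imprecisions in the statement. First, ``any subgroup'' must be read as ``any Frobenius-stable subgroup'' (equivalently, any $R$-submodule), since $(\F_{q^n},+)$ is an elementary abelian $p$-group with many $\F_p$-subspaces that are not $\sigma$-invariant. Second, the clause ``$d\mid n$'' does not follow from $d(x)\mid x^n-1$ in general; it holds only in the special case $d(x)=x^m-1$ with $m\mid n$, which you correctly isolate as the subfield case. Your remark about the reciprocal automorphism mediating the dual action is a nice point, and as you observe it is degree-preserving and hence harmless for the cardinality claim.
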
	
	


\subsection{Normal Element Test}	

A survey of some normality test algorithms is given in \cite{VG1990}.
\begin{lem} \label{lem2727PNT-I}\hypertarget{lem2727PNT-I}  {\normalfont (Normal element test.)} Let $r(x) \mid x^n-1$ and let $(x^{n}-1)/r(x)=a_0+a_1x+a_2x^2+\cdots +a_dx^d\in\F_q[x]$.	An element $\alpha \in \F_{q^n}$ is normal if and only if 
\begin{equation}
	\left( \frac{x^{n}-1}{r(x)}\right) (\sigma)(\alpha)=\sum_{0\leq k\leq d}\sigma_k(\alpha)-\alpha=\sum_{0\leq k\leq d}a_kx^{kq^k}-\alpha\ne0 
\end{equation}
for all irreducible factors $r(x) \mid x^n-1$.
\end{lem}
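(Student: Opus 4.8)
The plan is to exploit the $\F_q[x]$-module structure that the $q$-power Frobenius $\sigma$ places on $\F_{q^n}$. First I would recall that $\sigma$ is $\F_q$-linear (since $c^q=c$ for $c\in\F_q$) and that $\sigma^n=\mathrm{id}$ on $\F_{q^n}$, so the evaluation map $\F_q[x]\to\operatorname{End}_{\F_q}(\F_{q^n})$, $x\mapsto\sigma$, is a ring homomorphism killing $x^n-1$; thus $\F_{q^n}$ becomes a module over $R:=\F_q[x]/(x^n-1)$ with $x\cdot\beta=\sigma(\beta)=\beta^q$. For $g(x)=a_0+a_1x+\cdots+a_dx^d\in\F_q[x]$ the action is $g(\sigma)(\beta)=a_0\beta+a_1\beta^q+\cdots+a_d\beta^{q^d}$, which is the linearized evaluation $g(\sigma)$ of Definition \ref{dfn7171CFNE.200m} written with the $q$-power Frobenius. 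Since $1,x,\dots,x^{n-1}$ is an $\F_q$-basis of $R$, the cyclic submodule generated by $\alpha$ is $R\cdot\alpha=\operatorname{span}_{\F_q}\{\alpha,\alpha^q,\dots,\alpha^{q^{n-1}}\}$.

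Next I would record two facts. (i) The annihilator $\operatorname{Ann}(\alpha)=\{g\in\F_q[x]:g(\sigma)(\alpha)=0\}$ is an ideal of the principal ideal domain $\F_q[x]$ containing $x^n-1$, hence is generated by its monic element of least degree, which is $\Ord_q\alpha$ by Definition \ref{dfn2727PNT.200C}; consequently $g(\sigma)(\alpha)=0$ if and only if $\Ord_q\alpha\mid g$, and in particular $\Ord_q\alpha\mid x^n-1$. (ii) By Definition \ref{dfn2727PNT.200C} together with the basis description \eqref{eq4343PNEFF.050b}, $\alpha$ is a normal element exactly when $\Ord_q\alpha=x^n-1$ --- equivalently, when $R\cdot\alpha=\F_{q^n}$, since $\dim_{\F_q}(R\cdot\alpha)=\dim_{\F_q}\F_q[x]/(\Ord_q\alpha)=\deg\Ord_q\alpha$ and this equals $n=[\F_{q^n}:\F_q]$ iff $\Ord_q\alpha=x^n-1$.

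The substance of the lemma is the reduction of the test ``$\Ord_q\alpha=x^n-1$'' to the irreducible factors alone. I would prove the elementary divisor-lattice fact: for a monic $g\mid x^n-1$, one has $g\ne x^n-1$ if and only if $g\mid (x^n-1)/r(x)$ for some irreducible factor $r(x)$ of $x^n-1$. Indeed, if $x^n-1=g\,h$ with $\deg h\geq1$, pick any irreducible $r\mid h$; then $(x^n-1)/r=g\cdot(h/r)\in\F_q[x]$, so $g\mid(x^n-1)/r$. Conversely, $\deg\bigl((x^n-1)/r\bigr)<n$ forces $g\ne x^n-1$. This uses only that $\F_q[x]$ is a unique factorization domain, so it needs no special treatment of the inseparable case $p\mid n$ (where $x^n-1=(x^{n/p^{a}}-1)^{p^{a}}$ is not squarefree). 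Chaining this with (i) and (ii): $\alpha$ is \emph{not} normal $\iff\Ord_q\alpha$ is a proper divisor of $x^n-1$ $\iff$ there is an irreducible factor $r(x)\mid x^n-1$ with $\Ord_q\alpha\mid(x^n-1)/r(x)$ $\iff$ there is an irreducible factor $r(x)\mid x^n-1$ with $\bigl(\tfrac{x^n-1}{r(x)}\bigr)(\sigma)(\alpha)=0$. Negating this chain gives the asserted criterion; writing $(x^n-1)/r(x)=a_0+a_1x+\cdots+a_dx^d$ and unwinding the $\sigma$-action turns the condition into the displayed linear combination $\sum_{0\leq k\leq d}a_k\,\alpha^{q^k}\ne0$.

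The only point that I expect to need genuine care is fact (ii): one must confirm that the $\F_q$-span of the Frobenius orbit of $\alpha$ is exactly the cyclic module $R\cdot\alpha$ and has $\F_q$-dimension $\deg\Ord_q\alpha$, which is the bridge between the ``basis'' description of a normal element in \eqref{eq4343PNEFF.050b} and the ``additive order equals $x^n-1$'' description of Definition \ref{dfn2727PNT.200C}. Everything else is formal: manipulation of ideals of $\F_q[x]$ and of the finite divisor lattice of $x^n-1$, with the biconditional ranging over the finitely many distinct monic irreducible factors $r(x)\mid x^n-1$ and no asymptotics involved.
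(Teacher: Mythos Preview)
Your proof is correct and complete. The paper, however, gives no proof of this lemma at all: it is stated as a known fact, with only the preceding sentence ``A survey of some normality test algorithms is given in \cite{VG1990}'' by way of attribution, and the text immediately moves on to remark that there is also a linear-algebra test independent of the factorization of $x^n-1$.

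Your approach is the standard module-theoretic one and is exactly what a reader would supply: view $\F_{q^n}$ as an $\F_q[x]/(x^n-1)$-module via the Frobenius, identify $\Ord_q\alpha$ with the generator of the annihilator ideal, and reduce ``$\Ord_q\alpha=x^n-1$'' to the finitely many tests $\bigl((x^n-1)/r\bigr)(\sigma)(\alpha)\ne0$ over irreducible $r\mid x^n-1$ via the elementary divisor-lattice observation. You also correctly normalized the somewhat garbled displayed formula in the statement (the expressions $\sum_k\sigma_k(\alpha)-\alpha$ and $\sum_k a_k x^{kq^k}-\alpha$ are not literally what is meant) to its intended content $\sum_{k=0}^{d}a_k\alpha^{q^k}\ne0$, and you handled the possibly inseparable case $p\mid n$ without needing a separate argument. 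There is nothing to compare against in the paper itself.
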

Normal bases also have a deterministic test based on linear algebra. This an important test because it is independent of the irreducible factorization of the polynomial $x^n-1\in \F[x]$. 

\subsection{Primitive Element Test}

\begin{lem} \label{lem2727PET-I}\hypertarget{lem2727PET-I}  {\normalfont (Primitive element test)} An element $\alpha \in \F_{q^n}$ is a primitive if and only if 
	\begin{equation}\label{eq2727PET.150g}
		\alpha^{(q^n-1)/p} -1\ne 0
	\end{equation}
	for all prime divisors $p \mid q^n-1$.
\end{lem}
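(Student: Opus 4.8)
The plan is to work entirely inside the cyclic group $\F_{q^n}^{\times}$, which has order $q^n-1$, and to translate the primitivity of $\alpha$ into a statement about its multiplicative order $\ord_q\alpha$ as in \hyperlink{dfn2727PED.200P}{Definition} \ref{dfn2727PED.200P}. The only facts needed are that $\alpha$ is primitive if and only if $\ord_q\alpha=q^n-1$, and that for any divisor $e\mid q^n-1$ one has $\alpha^{e}=1$ if and only if $\ord_q\alpha\mid e$. Throughout I take $\alpha\ne0$, i.e.\ $\alpha\in\F_{q^n}^{\times}$, since $0$ is never primitive.

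For the forward implication, suppose $\alpha$ is primitive, so $\ord_q\alpha=q^n-1$. For each prime $p\mid q^n-1$ the integer $(q^n-1)/p$ is a proper divisor of $q^n-1$, hence not a multiple of $\ord_q\alpha=q^n-1$; therefore $\alpha^{(q^n-1)/p}\ne1$, which is exactly \eqref{eq2727PET.150g}.

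For the converse I would argue by contraposition. Assume $\alpha$ is not primitive, so $d:=\ord_q\alpha$ is a proper divisor of $q^n-1$. Then the cofactor $m:=(q^n-1)/d$ satisfies $m>1$, so it has at least one prime divisor $p$. Since $p\mid m$ we get $d\mid (q^n-1)/p$, whence $\alpha^{(q^n-1)/p}=1$ and $\alpha^{(q^n-1)/p}-1=0$ for that prime $p$. Thus condition \eqref{eq2727PET.150g} fails for some prime divisor of $q^n-1$, which completes the contrapositive and hence the proof.

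There is no serious obstacle here; the statement is elementary once one has the cyclicity of $\F_{q^n}^{\times}$. The only point that genuinely deserves attention is the step in the converse: a single prime $p$ dividing the cofactor $(q^n-1)/\ord_q\alpha$ already yields a witness $\alpha^{(q^n-1)/p}=1$, and this is precisely what makes the primitive element test both finite (one checks at most $\omega(q^n-1)$ exponentiations) and logically correct. A minor remark worth including is that the hypothesis $\alpha\neq0$ cannot be dropped, since $0$ trivially satisfies $0^{(q^n-1)/p}-1=-1\ne0$ for every prime $p\mid q^n-1$ yet is not primitive.
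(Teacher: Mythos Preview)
Your argument is correct and is the standard elementary proof: primitivity is equivalent to $\ord_q\alpha=q^n-1$, and failure of this order forces $\alpha^{(q^n-1)/p}=1$ for some prime $p$ dividing the cofactor $(q^n-1)/\ord_q\alpha$. The paper itself states this lemma without proof, so there is no alternative approach to compare against; your observation that the hypothesis $\alpha\ne0$ is implicitly required (since $0$ vacuously passes the test) is a worthwhile clarification the paper omits.
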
	
Unlike, normal elements, there is no primitive root test based on linear algebra. Currently, except for some special cases, all the versions of the primitive root test depend on the prime factorization of $q^n-1$.		
\subsection{Fundamental Identities} \label{S2727CEES-IA}\hypertarget{S2727CEES-IA}
The set of additive characters is denoted by $\mathscr{C}_a(q^n)=\{\psi:\F_{q^n}\longrightarrow\C\}$ and the set of multiplicative characters is denoted by $\mathscr{C}_m(q^n)=\{\chi:\F_{q^n}^{\times}\longrightarrow\C\}$ 

\begin{lem}   \label{lem2727CEES.600}\hypertarget{lem2727CEES.600} Let $q=p^k$ be a prime power and let $\F_{q^n}$ be a finite field. If $\tau\in \F_{q^n}$ is a primitive root,  then the multiplicative characters $\chi $ and the additive characters $\psi $ of $\F_{q^n}$ have the representations.
	\begin{enumerate} [font=\normalfont, label=(\roman*)]
		\item  
		$\displaystyle 	
		\chi(\alpha)=e^{i2\pi b\frac{N(\alpha)}{q^n-1}},$\tabto{4cm}for some $b\in [1, q^n-1]$, multiplicative character.\\
		
		\item  $\displaystyle 	
		\chi(\alpha)=e^{i2\pi b\frac{\tr(\alpha)}{p}},$\tabto{4cm}for some $b\in [0, q^n-1]$, additive identity. 
	\end{enumerate}
\end{lem}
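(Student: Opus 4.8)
Both formulas are the self-duality statements for the two group structures on $\F_{q^n}$, so the plan is, in each case, to exhibit a generator of the relevant character group and then read off an arbitrary character by transporting it through an explicit isomorphism of $\F_{q^n}^{\times}$, resp.\ of $(\F_{q^n},+)$, onto a concrete cyclic, resp.\ elementary abelian, model. The primitive root $\tau$ supplies that isomorphism in both cases.

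For (i) I would first recall that every finite subgroup of the multiplicative group of a field is cyclic, so $\F_{q^n}^{\times}$ is cyclic of order $q^n-1$ and is generated by $\tau$; hence every $\alpha\in\F_{q^n}^{\times}$ is uniquely $\alpha=\tau^{\ind_{\tau}(\alpha)}$ with $\ind_{\tau}(\alpha)\in[0,q^n-2]$ the index of $\alpha$ to base $\tau$ (the quantity written $N(\alpha)$ in Definition \ref{dfn7171CFPE.200i}). Since a multiplicative character $\chi$ is a homomorphism into $\C^{\times}$, its value at $\tau$ satisfies $\chi(\tau)^{q^n-1}=\chi(\tau^{q^n-1})=\chi(1)=1$, so $\chi(\tau)=e^{i2\pi b/(q^n-1)}$ for a unique $b\in[1,q^n-1]$, with $b=q^n-1$ recovering $\chi_0$. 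Multiplicativity then forces $\chi(\alpha)=\chi(\tau)^{\ind_{\tau}(\alpha)}=e^{i2\pi b\,\ind_{\tau}(\alpha)/(q^n-1)}$, and conversely each such $b$ defines a character; this yields the stated formula together with a bijection between $[1,q^n-1]$ and $\mathscr{C}_m(q^n)$.

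For (ii) I would view $(\F_{q^n},+)$ as an $\F_p$-vector space of dimension $kn$ and work through the trace. The absolute trace $\tr:\F_{q^n}\to\F_p$ of Definition \ref{dfn7171CFNE.200t} is a nonzero $\F_p$-linear functional, hence surjective, so $\psi_{1}(\alpha)=e^{i2\pi\tr(\alpha)/p}$ is a nontrivial additive character. Setting $\psi_{c}(\alpha)=\psi_{1}(c\alpha)=e^{i2\pi\tr(c\alpha)/p}$ for $c\in\F_{q^n}$, the map $c\mapsto\psi_{c}$ is an additive homomorphism into $\mathscr{G}_+$ that is injective: $\psi_{c}\equiv1$ means $\tr(c\alpha)=0$ for all $\alpha$, i.e.\ $c$ lies in the radical of the symmetric bilinear form $(\alpha,\beta)\mapsto\tr(\alpha\beta)$, which is $\{0\}$. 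Since $\#\mathscr{G}_+=q^n$ by Lemma \ref{lem7171CFNE.200A}, the map is an isomorphism, so every additive character equals $\psi_{c}$ for a unique $c\in\F_{q^n}$; writing $c=\tau^{b}$ (and $c=0$ for $\psi_0$) puts it in the claimed exponential shape with $b$ ranging over $[0,q^n-1]$.

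The step I expect to be the crux is the nondegeneracy of the trace form in (ii): one must check that $c\mapsto\bigl(\alpha\mapsto\tr(c\alpha)\bigr)$ is injective from $\F_{q^n}$ into its $\F_p$-dual. I would deduce this from separability of $\F_{q^n}/\F_p$ — the trace of a finite separable field extension is not identically zero — combined with the substitution $\alpha\mapsto c^{-1}\alpha$ for $c\neq0$. The remaining points are bookkeeping: matching the index $\ind_{\tau}(\alpha)$ with the symbol $N(\alpha)$ used in the displayed formula and fixing the exact ranges of the parameter $b$, which is purely a matter of normalization.
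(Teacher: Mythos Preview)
Your argument is correct and is in fact considerably more detailed than the paper's own proof, which consists of a single sentence referring the reader to the character-sum chapter of Lidl--Niederreiter: ``The absolute norm and trace functions in characteristic $\tchar p$ are given by $N(\alpha)$ and $\tr(\alpha)$, see \cite{LN1997}.'' So you are not taking a different route; you are supplying the standard proof that the paper chose to outsource.

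One remark on bookkeeping: you were right to flag the symbol $N(\alpha)$ in part (i). In the paper's Definition~\ref{dfn7171CFNE.200t} the map $N$ is the absolute norm into $\F_p$, whereas for the formula in (i) to parametrize all $q^n-1$ multiplicative characters one needs the discrete logarithm $\ind_\tau(\alpha)\in\Z/(q^n-1)\Z$, exactly as you wrote. Likewise in (ii), your proof establishes the correct statement $\psi(\alpha)=e^{i2\pi\tr(c\alpha)/p}$ with $c\in\F_{q^n}$ (matching Definition~\ref{dfn7171CFNE.200i}); the lemma as displayed places the parameter $b$ \emph{outside} the trace, which would only produce $p$ distinct characters. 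Your final step of writing $c=\tau^b$ does not literally recover that displayed form, but this is a defect of the lemma's statement rather than of your argument.
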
 
\begin{proof}[\textbf{Proof}]The absolute norm and trace functions in characteristic $\tchar  p$ are given by  $N(\alpha)$ and $\tr(\alpha)$, see {\color{red}\cite[Characters Sums]{LN1997}}.
\end{proof} 

\begin{lem}   \label{lem2727CEES.650A}\hypertarget{lem2727CEES.650A} Let $q=p^k$ be a prime power and let $\F_{q^n}$ be a finite field. If $\psi $ is an additive character and $\chi $ is a multiplicative of $\F_{q^n}$, then the following finite Fourier representations hold.
	\begin{enumerate} [font=\normalfont, label=(\roman*)]
		\item $\displaystyle \psi(\alpha)=\frac{1}{q^n-1}\sum _{\chi \in \mathscr{C}_m(q^n)}\chi(\alpha)G(\overline{\chi},\psi),$\tabto{9cm}Additive identity. \\
		
		\item  
		$\displaystyle 	\chi(\alpha)=\frac{1}{q^n}\sum _{\psi \in \mathscr{C}_a(q^n)}\psi(\alpha)G(\chi,\overline{\psi}),$\tabto{9cm}Multiplicative identity.		
	\end{enumerate}
\end{lem}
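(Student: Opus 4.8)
The plan is to deduce both identities from the orthogonality relations for the two character groups of $\F_{q^n}$ together with the defining formula for the Gauss sum $G(\chi,\psi)=\sum_{\beta\in\F_{q^n}^{\times}}\chi(\beta)\psi(\beta)$. The two orthogonality relations I would record first are: over the additive group, $\sum_{\psi\in\mathscr{C}_a(q^n)}\psi(\gamma)=q^n$ if $\gamma=0$ and $0$ otherwise; and over the multiplicative group, $\sum_{\chi\in\mathscr{C}_m(q^n)}\chi(\gamma)=q^n-1$ if $\gamma=1$ and $0$ otherwise, for $\gamma\in\F_{q^n}^{\times}$. Both are the standard statement that a nontrivial character of a finite abelian group sums to zero over the group, applied to $(\F_{q^n},+)$ and to $\F_{q^n}^{\times}$; see \cite{LN1997}.

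For part (ii), fix $\alpha\in\F_{q^n}^{\times}$, substitute the definition of $G(\chi,\overline{\psi})$ into the right-hand side, and interchange the order of summation, using $\overline{\psi(\beta)}=\psi(-\beta)$ so that $\psi(\alpha)\overline{\psi(\beta)}=\psi(\alpha-\beta)$:
\[
\sum_{\psi\in\mathscr{C}_a(q^n)}\psi(\alpha)G(\chi,\overline{\psi})
=\sum_{\beta\in\F_{q^n}^{\times}}\chi(\beta)\sum_{\psi\in\mathscr{C}_a(q^n)}\psi(\alpha-\beta)
=q^n\,\chi(\alpha),
\]
because the inner sum vanishes unless $\beta=\alpha$. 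Dividing by $q^n$ gives the multiplicative identity.

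For part (i), fix $\alpha\in\F_{q^n}^{\times}$ and run the mirror-image computation, now summing over the multiplicative characters and using $\overline{\chi(\beta)}=\chi(\beta^{-1})$ for $\beta\ne0$:
\[
\sum_{\chi\in\mathscr{C}_m(q^n)}\chi(\alpha)G(\overline{\chi},\psi)
=\sum_{\beta\in\F_{q^n}^{\times}}\psi(\beta)\sum_{\chi\in\mathscr{C}_m(q^n)}\chi(\alpha\beta^{-1})
=(q^n-1)\,\psi(\alpha),
\]
since the inner sum equals $q^n-1$ precisely when $\alpha\beta^{-1}=1$ and $0$ otherwise. Dividing by $q^n-1$ gives the additive identity.

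The delicate point, and the one I expect to need explicit attention, is the behaviour at $\alpha=0$: there $\psi(0)=1$ (resp.\ $\chi_0(0)=1$), but with the punctured Gauss sum $\sum_{\beta\in\F_{q^n}^{\times}}$ the right-hand sides of (i) and (ii) do not reproduce these values, since swapping the sums then produces the indicator of $\beta=0$ over a range that excludes $0$. I would resolve this by stating at the outset that both identities are read for $\alpha\in\F_{q^n}^{\times}$ --- which is all that is needed in the sequel --- or, if the value at $\alpha=0$ is wanted, by passing to the complete Gauss sum $G(\chi,\psi)=\sum_{\beta\in\F_{q^n}}\chi(\beta)\psi(\beta)$ with the convention $\chi(0)=0$, and rechecking the two displayed identities under that normalization. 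Apart from this bookkeeping the argument reduces to the two character-orthogonality computations above.
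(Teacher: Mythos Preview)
Your proof is correct and is in fact more detailed than the paper's own, which consists of a one-line citation to the character-sums chapter of Lidl--Niederreiter (``These are the finite Fourier transforms of the functions $\chi$ and $\psi$ respectively''). The underlying idea is identical --- orthogonality of characters gives Fourier inversion --- but you have written out the computation that the paper delegates to the reference, and you have also flagged the $\alpha=0$ caveat, which the paper does not mention.
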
 
\begin{proof}[\textbf{Proof}]These are the finite Fourier transforms of the functions $\chi$ and $\psi$ respectively, see {\color{red}\cite[Characters Sums]{LN1997}}.
\end{proof}

\begin{lem}   \label{lem2727CEES.650GS}\hypertarget{lem2727CEES.650GS} Let $q=p^k$ be a prime power and let $\F_{q^n}$ be a finite field. If $\psi $ is an additive character and $\chi $ is a multiplicative of $\F_{q^n}$, then the Gauss sum satisfies the followings.
	\begin{equation}\label{eq2727CEES.650Ik}
		G(\chi,\psi)=\left \{
		\begin{array}{ll}
			q^n-1 & \text{   \normalfont if } \chi=1\text{ and }\psi=1,  \\[.3cm]
			0 & \text{   \normalfont if } \chi\ne1\text{ and }\psi=1, \\[.3cm]
			-1 & \text{   \normalfont if } \chi=1\text{ and }\psi\ne1,
		\end{array} \right .
	\end{equation}
\end{lem}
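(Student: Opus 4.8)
The plan is to recall the definition of the Gauss sum, $G(\chi,\psi)=\sum_{\alpha\in\F_{q^n}^{\times}}\chi(\alpha)\psi(\alpha)$, consistent with its use in \hyperlink{lem2727CEES.650A}{Lemma} \ref{lem2727CEES.650A}, and then to evaluate it in each of the three listed cases directly, invoking only the orthogonality relations for the additive and multiplicative characters of $\F_{q^n}$ (the standard character sums recorded in \cite{LN1997}).

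First, if $\chi=1$ and $\psi=1$ are both trivial, then every summand equals $1$ and the index set $\F_{q^n}^{\times}$ has $q^n-1$ elements, so $G(1,1)=q^n-1$. Second, if $\chi\ne1$ but $\psi=1$, then $G(\chi,1)=\sum_{\alpha\in\F_{q^n}^{\times}}\chi(\alpha)$; since $\chi$ is a nontrivial character of the cyclic group $\F_{q^n}^{\times}$, choose $\beta$ with $\chi(\beta)\ne1$, and observe that $\alpha\mapsto\beta\alpha$ permutes $\F_{q^n}^{\times}$, whence $\chi(\beta)\sum_{\alpha}\chi(\alpha)=\sum_{\alpha}\chi(\alpha)$, forcing the sum to vanish. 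Third, if $\chi=1$ and $\psi\ne1$, write $G(1,\psi)=\sum_{\alpha\in\F_{q^n}^{\times}}\psi(\alpha)=\left(\sum_{\alpha\in\F_{q^n}}\psi(\alpha)\right)-\psi(0)$; the full sum over the additive group $\F_{q^n}$ vanishes by the same translation trick (pick $\gamma$ with $\psi(\gamma)\ne1$ and use $\alpha\mapsto\alpha+\gamma$), while $\psi(0)=1$, so $G(1,\psi)=-1$.

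There is essentially no obstacle here beyond bookkeeping: the one point requiring care is to fix the Gauss sum convention (summation over $\F_{q^n}^{\times}$ rather than $\F_{q^n}$) and to keep it compatible with the Fourier identities of \hyperlink{lem2727CEES.650A}{Lemma} \ref{lem2727CEES.650A}. Alternatively, the second and third cases can be extracted from \hyperlink{lem2727CEES.650A}{Lemma} \ref{lem2727CEES.650A} by specializing $\alpha$, but the self-contained translation argument above is the cleaner route, and it makes transparent why no hypothesis is needed on $n$, $k$, or $p$.
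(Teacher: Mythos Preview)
Your proof is correct and follows the standard orthogonality argument; the paper itself gives no details at all, merely citing {\color{red}\cite[Characters Sums]{LN1997}}, so your self-contained translation argument is in fact more complete than the paper's own proof while being exactly the computation that reference would supply.
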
 
\begin{proof}[\textbf{Proof}]These are the finite Fourier transforms of the functions $\chi$ and $\psi$ respectively, see {\color{red}\cite[Characters Sums]{LN1997}}.
\end{proof}

\section{Estimates for Character Sums} \label{S2727CEES-I}\hypertarget{S2727CEES-I}
The new characteristic function for primitive elements and normal elements in finite fields require certain exponential sums over certain subgroups. There are several methods for estimating these exponential sums. Some of these methods are employed here. An estimate of the character sum similar to \hyperlink{lem2727CEES.650A}{Lemma} \ref{lem2727CEES.650A} over the set of prime numbers is proved in {\color{red}\cite[Section 2]{ES1957}}. The general version over finite fields is presented here. Let $ \mathcal{R}$ be the finite ring of polynomials $r(x)\in\F_q[x]$ modulo $x^n-1$. \\

\subsection{Estimates for Incomplete Character Sums over Finite Rings} \label{S2727SGFR-EFR}\hypertarget{S2727SGFR-EFR}
The finite ring of integers is denoted by $ \mathcal{R}=\Z/q^n\Z$ and its group of units is denoted by $\mathcal{R}^{\times}=\left( \Z/q^n\Z\right)^{\times}$. Some basic estimates of the incomplete character sums over the finite ring  $ \mathcal{R}$ are computed in this section. These estimates are essentially the same as those appearing in \cite{ES1957}, and \cite{GS2008}.
\begin{lem}   \label{lem2727FRI.675I}\hypertarget{lem2727FRI.675I} Let $q=p^k$ be a prime power and let $\mathcal{U},\mathcal{V}\subset \mathcal{R}^{\times} $ be a pair of subsets of integers of cardinalities $\#\mathcal{U},\#\mathcal{V}\leq q^n$. If $\psi\ne1 $ is an additive character over the finite ring $\mathcal{R}$, then
	\begin{equation} \label{eq2727FRI.650Id}
		\sum _{v\in \mathcal{V}}\sum _{u\in \mathcal{U}} \psi(uv) 
		\ll q^{n/2} \cdot \sqrt{\#U\cdot \#V }\nonumber.
	\end{equation} 
\end{lem}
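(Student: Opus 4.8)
The statement to prove is Lemma \ref{lem2727FRI.675I}, a bilinear character sum estimate over the finite ring $\mathcal{R} = \Z/q^n\Z$: for subsets $\mathcal{U},\mathcal{V}\subset\mathcal{R}^\times$ of size at most $q^n$ and a nontrivial additive character $\psi$, one has $\sum_{v\in\mathcal{V}}\sum_{u\in\mathcal{U}}\psi(uv)\ll q^{n/2}\sqrt{\#\mathcal{U}\cdot\#\mathcal{V}}$. The natural approach is the classical Cauchy--Schwarz argument. First I would fix $\psi$ nontrivial and write $S = \sum_{v\in\mathcal{V}}\sum_{u\in\mathcal{U}}\psi(uv)$. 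Viewing $v$ as the outer variable, apply Cauchy--Schwarz in $v$ over $\mathcal{V}$, extending the outer sum to all of $\mathcal{R}$ (or to $\mathcal{R}^\times$) with nonnegative terms to get
\[
|S|^2 \le \#\mathcal{V}\cdot \sum_{v\in\mathcal{R}} \left| \sum_{u\in\mathcal{U}} \psi(uv)\right|^2.
\]

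**Main step.** Next I would expand the square: $\sum_{v\in\mathcal{R}}\big|\sum_{u\in\mathcal{U}}\psi(uv)\big|^2 = \sum_{u_1,u_2\in\mathcal{U}}\sum_{v\in\mathcal{R}}\psi((u_1-u_2)v)$. The inner sum over $v\in\mathcal{R}$ of $\psi((u_1-u_2)v)$ is a complete additive character sum; by the orthogonality relations for additive characters on $\mathcal{R}$ it equals $q^n$ when $(u_1-u_2)$ annihilates $\psi$ — and since $\psi$ is nontrivial with $\mathcal{R}=\Z/q^n\Z$, after reducing to the primitive part this forces $u_1 = u_2$ (here one uses that $u_1,u_2\in\mathcal{R}^\times$, so differences behave well with respect to the conductor of $\psi$) — and $0$ otherwise. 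Hence the double sum collapses to $q^n\cdot\#\mathcal{U}$. Combining, $|S|^2 \le \#\mathcal{V}\cdot q^n\cdot\#\mathcal{U}$, and taking square roots gives $|S|\le q^{n/2}\sqrt{\#\mathcal{U}\cdot\#\mathcal{V}}$, which is the claimed bound.

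**The delicate point.** The step that requires genuine care is the evaluation of $\sum_{v\in\mathcal{R}}\psi((u_1-u_2)v)$ when $\psi$ is a nontrivial but possibly \emph{imprimitive} character of $\mathcal{R}=\Z/q^n\Z$: orthogonality only gives $q^n\cdot\mathbf{1}[(u_1-u_2)\equiv 0 \bmod (q^n/\mathrm{cond}\,\psi)]$, so a priori the diagonal contribution could come from more pairs than just $u_1=u_2$. One resolves this by noting that this only \emph{helps} — it can only make the right-hand side smaller after one restricts $\psi$ to be nontrivial — or, to keep the clean bound, one observes that each $u_1\in\mathcal{U}$ is matched with at most all of $\mathcal{U}$, so trivially $\sum_{u_1,u_2}q^n\mathbf{1}[\cdots]\le q^n\cdot(\#\mathcal{U})^2/\#\mathcal{R}^\times \cdot \mathrm{cond}\,\psi$; comparing the two cases and using $\#\mathcal{U}\le q^n$ shows the stated bound always holds. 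In fact for the sharp constant one should assume, as is standard in this Erd\H{o}s--S\'ark\"ozy--type setting, that $\psi$ is primitive, in which case the argument above is clean and exact. I would present the primitive case in full and remark that the general case follows with the same constant. The other routine ingredient — orthogonality of additive characters on a finite ring — is standard and I would simply cite it alongside Lemma \ref{lem2727CEES.650GS}.
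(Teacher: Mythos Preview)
Your approach is essentially identical to the paper's: apply Cauchy--Schwarz in the outer variable (the paper phrases this via indicator functions to extend the sums to all of $\mathcal{R}$), expand the square, and invoke the orthogonality relation $\sum_{v\in\mathcal{R}}\psi((u_1-u_0)v)=q^n\cdot\mathbf{1}[u_0=u_1]$ to collapse the inner sum to $q^n\,\#\mathcal{U}$. The paper simply asserts this orthogonality for any $\psi\neq 1$ without discussing the imprimitive case you flag, so your ``delicate point'' paragraph (whose resolution is a bit muddled as written) goes beyond what the paper does; for the purposes of matching the paper you can drop it and just state the orthogonality as the paper does.
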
 

\begin{proof}[\textbf{Proof}]Let $\chi_{\scriptscriptstyle  \mathcal{U}} $ and $\chi_{\scriptscriptstyle\mathcal{V}}$ be the indicator functions of the subsets $\mathcal{U} \text{ and } \mathcal{V}\subset \Z/q^n\Z$ respectively. Now rewrite the character sum as
	\begin{equation} \label{eq2727FRI.675Id}
		\sum _{v\in \mathcal{V}}\sum _{u\in \mathcal{U}} \psi(uv) 
		=\sum _{v\in \mathcal{R}}\chi_{\scriptscriptstyle\mathcal{V}}(v) \sum _{u\in \mathcal{R}}\chi_{\scriptscriptstyle\mathcal{U}}(u)  \psi(uv) ,
	\end{equation} 
	and an application of the Schwarz inequality yields
	\begin{equation} \label{eq2727FRI.675If}
\left|	\sum _{v\in \mathcal{V}}\sum _{u\in \mathcal{U}} \psi(uv)\right|^2 
\leq \sum _{v\in\mathcal{R}}\left|\chi_{\scriptscriptstyle\mathcal{V}}(v)\right)|^2  \sum _{v\in \mathcal{R}}\left|\sum _{u\in \mathcal{R}}\chi_{\scriptscriptstyle\mathcal{U}}(u)  \psi(uv)\right|^2  .
	\end{equation} 
	
	The norm of the outer sum is
	\begin{equation} \label{eq2727FRI.675Ig}
		\sum _{v\in \mathcal{R}}\left|\chi_{\scriptscriptstyle\mathcal{V}}(v)\right)|^2 
		=\#V .
	\end{equation}
	And the norm of the inner sum is
	\begin{eqnarray}\label{eq2727FRI.675Ii}
		\sum _{v\in \mathcal{R}}\left|\sum _{u\in \mathcal{R}^{\times}}\chi_{\scriptscriptstyle\mathcal{U}}(u)  \psi(uv)\right|^2 
		&=& \sum _{v\in \mathcal{R}}\sum _{u_1\in \mathcal{R}}\chi_{\scriptscriptstyle\mathcal{U}}(u_1)  \psi(u_1v)   \sum _{u_0\in \mathcal{R}}\overline{\chi_{\scriptscriptstyle\mathcal{U}}(u_0)  \psi(uv)}\\[.3cm]
		&=&\sum _{u_1\in \mathcal{R}}\chi_{\scriptscriptstyle\mathcal{U}}(u_1)\sum _{u_0\in \mathcal{R}}\chi_{\scriptscriptstyle\mathcal{U}}(u_0)\sum _{v\in \mathcal{R}} \psi((u_1-u_0)v)\nonumber\\[.3cm]
		&=&\sum _{u_0\in \mathcal{R}}\chi_{\scriptscriptstyle\mathcal{U}}(u_0)^2\sum _{v\in \mathcal{R}} 1\nonumber\\[.3cm]
		&=&q^n\#	\mathcal{U},\nonumber
	\end{eqnarray}
	since for $\psi\ne1$, the character sum
	\begin{equation}\label{eq2727FRI.675Ij}
		\sum _{v\in \mathcal{R}} \psi((u_1-u_0)v)=\left \{
		\begin{array}{ll}
			q^n & \text{   \normalfont if } u_0=u_1,  \\
			0 & \text{   \normalfont if } u_0\ne u_1, \\
		\end{array} \right .
	\end{equation}
	and the indicator function $\chi_{\mathcal{U}}(u)=\overline{\chi_{\mathcal{U}}(u)}$ is a real value function. Merging \eqref{eq2727FRI.675Ig} and \eqref{eq2727FRI.675Ii} into \eqref{eq2727FRI.675If} returns
	\begin{equation} \label{eq2727FRI.675Iv}
		\left|	\sum _{v\in \mathcal{V}}\sum _{u\in \mathcal{U}} \psi(uv)\right|^2 
		\leq q^n\#	\mathcal{U}\#V  .
	\end{equation} 
	This completes the verification.
\end{proof}

\subsection{Estimates for Incomplete Character Sums over Finite Fields} \label{S2727CEES-IB}\hypertarget{S2727CEES-IB}
The basic technique used in the proofs of these incomplete character sums are essentially the same as those appearing in \cite{ES1957}, and \cite{GS2008}.
\begin{lem}   \label{lem2727CEES.675I}\hypertarget{lem2727CEES.675I} Let $q=p^k$ be a prime power and let $\mathcal{U},\mathcal{V}\subset \F_{q^n}$ be a pair of subsets of elements of cardinalities $\#\mathcal{U},\#\mathcal{V}\leq q^n$. If $\psi\ne1 $ is an additive character over the finite field $\F_{q^n}$, then
	\begin{equation} \label{eq2727CEES.650Id}
		\sum _{v\in \mathcal{V}}\sum _{u\in \mathcal{U}} \psi(uv) 
		\ll q^{n/2} \cdot \sqrt{\#U\cdot \#V }\nonumber.
	\end{equation} 
\end{lem}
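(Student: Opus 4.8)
The plan is to run the same Cauchy--Schwarz argument used in \hyperlink{lem2727FRI.675I}{Lemma} \ref{lem2727FRI.675I} for the finite ring $\mathcal{R}=\Z/q^n\Z$, but now with $\mathcal{R}$ replaced by the finite field $\F_{q^n}$. The one step that genuinely changes is the orthogonality relation for additive characters, and over a field it in fact becomes \emph{simpler}: there is no need to restrict to a group of units, since every nonzero element of $\F_{q^n}$ is invertible. So I expect no real obstacle; the only place to be careful is verifying that a nonzero scalar multiple of a nontrivial additive character of $\F_{q^n}$ is again nontrivial.

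First I would introduce the indicator functions $\chi_{\scriptscriptstyle\mathcal{U}},\chi_{\scriptscriptstyle\mathcal{V}}\colon\F_{q^n}\to\{0,1\}$ of the subsets $\mathcal{U},\mathcal{V}\subset\F_{q^n}$ and rewrite
$$\sum_{v\in\mathcal{V}}\sum_{u\in\mathcal{U}}\psi(uv)=\sum_{v\in\F_{q^n}}\chi_{\scriptscriptstyle\mathcal{V}}(v)\sum_{u\in\F_{q^n}}\chi_{\scriptscriptstyle\mathcal{U}}(u)\,\psi(uv).$$
Then I would apply the Schwarz inequality with the outer summation over $v$, obtaining
$$\Bigl|\sum_{v\in\mathcal{V}}\sum_{u\in\mathcal{U}}\psi(uv)\Bigr|^2\le\Bigl(\sum_{v\in\F_{q^n}}\chi_{\scriptscriptstyle\mathcal{V}}(v)^2\Bigr)\Bigl(\sum_{v\in\F_{q^n}}\Bigl|\sum_{u\in\F_{q^n}}\chi_{\scriptscriptstyle\mathcal{U}}(u)\,\psi(uv)\Bigr|^2\Bigr).$$
The first bracket is just $\#\mathcal{V}$, since $\chi_{\scriptscriptstyle\mathcal{V}}$ is $\{0,1\}$-valued.

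Next I would expand the modulus square in the second bracket and interchange the order of summation, using that $\chi_{\scriptscriptstyle\mathcal{U}}$ is real-valued:
$$\sum_{v\in\F_{q^n}}\Bigl|\sum_{u}\chi_{\scriptscriptstyle\mathcal{U}}(u)\,\psi(uv)\Bigr|^2=\sum_{u_1\in\F_{q^n}}\chi_{\scriptscriptstyle\mathcal{U}}(u_1)\sum_{u_0\in\F_{q^n}}\chi_{\scriptscriptstyle\mathcal{U}}(u_0)\sum_{v\in\F_{q^n}}\psi\bigl((u_1-u_0)v\bigr).$$
Here is where the field hypothesis enters: writing $\psi=\psi_c$ with $c\ne0$ (possible since $\psi\ne1$), the map $v\mapsto\psi\bigl((u_1-u_0)v\bigr)=\psi_{c(u_1-u_0)}(v)$ is an additive character of $\F_{q^n}$, which is trivial if and only if $c(u_1-u_0)=0$, i.e.\ (because $\F_{q^n}$ has no zero divisors) if and only if $u_0=u_1$. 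Hence the innermost sum equals $q^n$ when $u_0=u_1$ and $0$ otherwise, so the second bracket collapses to $q^n\sum_{u}\chi_{\scriptscriptstyle\mathcal{U}}(u)^2=q^n\,\#\mathcal{U}$.

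Finally, combining the two brackets gives $\bigl|\sum_{v\in\mathcal{V}}\sum_{u\in\mathcal{U}}\psi(uv)\bigr|^2\le q^n\,\#\mathcal{U}\,\#\mathcal{V}$, and taking square roots yields the claimed estimate $\ll q^{n/2}\sqrt{\#\mathcal{U}\cdot\#\mathcal{V}}$. As noted, no step is delicate; this is a standard large-sieve-type inequality, and the passage from $\Z/q^n\Z$ to $\F_{q^n}$ only removes the unit-group restriction present in \hyperlink{lem2727FRI.675I}{Lemma} \ref{lem2727FRI.675I}.
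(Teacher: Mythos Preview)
Your proposal is correct and follows essentially the same argument as the paper's own proof: rewrite via indicator functions, apply Cauchy--Schwarz in the $v$-variable, and collapse the inner square using additive-character orthogonality on $\F_{q^n}$. Your explicit remark that $c(u_1-u_0)=0$ forces $u_0=u_1$ because $\F_{q^n}$ has no zero divisors is the one detail you spell out more carefully than the paper does, but the route is identical.
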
 

\begin{proof}[\textbf{Proof}]Let $\chi_{\scriptscriptstyle  \mathcal{U}} $ and $\chi_{\scriptscriptstyle\mathcal{V}}$ be the indicator functions of the subsets $\mathcal{U} \text{ and } \chi_{\mathcal{V}}\subset \F_{q^n}$ respectively. Now rewrite the character sum as
\begin{equation} \label{eq2727CEES.675Id}
\sum _{v\in \mathcal{V}}\sum _{u\in \mathcal{U}} \psi(uv) 
=\sum _{v\in \F_{q^n}}\chi_{\scriptscriptstyle\mathcal{V}}(v) \sum _{u\in \F_{q^n}}\chi_{\scriptscriptstyle\mathcal{U}}(u)  \psi(uv) ,
	\end{equation} 
and an application of the Schwarz inequality yields
\begin{equation} \label{eq2727CEES.675If}
	\left|	\sum _{v\in \mathcal{V}}\sum _{u\in \mathcal{U}} \psi(uv)\right|^2 
	\leq \sum _{v\in \F_{q^n}}\left|\chi_{\scriptscriptstyle\mathcal{V}}(v)\right)|^2  \sum _{v\in \F_{q^n}}\left|\sum _{u\in \F_{q^n}}\chi_{\scriptscriptstyle\mathcal{U}}(u)  \psi(uv)\right|^2  .
\end{equation} 

The norm of the outer sum is
\begin{equation} \label{eq2727CEES.675Ig}
	\sum _{v\in \F_{q^n}}\left|\chi_{\scriptscriptstyle\mathcal{V}}(v)\right)|^2 
	=\#V .
\end{equation}
And the norm of the inner sum is
\begin{eqnarray}\label{eq2727CEES.675Ii}
 \sum _{v\in \F_{q^n}}\left|\sum _{u\in \F_{q^n}}\chi_{\scriptscriptstyle\mathcal{U}}(u)  \psi(uv)\right|^2 
&=& \sum _{v\in \F_{q^n}}\sum _{u_1\in \F_{q^n}}\chi_{\scriptscriptstyle\mathcal{U}}(u_1)  \psi(u_1v)   \sum _{u_0\in \F_{q^n}}\overline{\chi_{\scriptscriptstyle\mathcal{U}}(u_0)  \psi(uv)}\\[.3cm]
&=&\sum _{u_1\in \F_{q^n}}\chi_{\scriptscriptstyle\mathcal{U}}(u_1)\sum _{u_0\in \F_{q^n}}\chi_{\scriptscriptstyle\mathcal{U}}(u_0)\sum _{v\in \F_{q^n}} \psi((u_1-u_0)v)\nonumber\\[.3cm]
&=&\sum _{u_0\in \F_{q^n}}\chi_{\scriptscriptstyle\mathcal{U}}(u_0)^2\sum _{v\in \F_{q^n}} 1\nonumber\\[.3cm]
	&=&q^n\#	\mathcal{U},\nonumber
\end{eqnarray}
since for $\psi\ne1$, the character sum
\begin{equation}\label{eq2727CEES.675Ij}
	\sum _{v\in \F_{q^n}} \psi((u_1-u_0)v)=\left \{
	\begin{array}{ll}
		q^n & \text{   \normalfont if } u_0=u_1,  \\
		0 & \text{   \normalfont if } u_0\ne u_1, \\
	\end{array} \right .
\end{equation}
and the indicator function $\chi_{\mathcal{U}}(u)=\overline{\chi_{\mathcal{U}}(u)}$ is a real value function. Merging \eqref{eq2727CEES.675Ig} and \eqref{eq2727CEES.675Ii} into \eqref{eq2727CEES.675If} returns
\begin{equation} \label{eq2727CEES.675Iv}
	\left|	\sum _{v\in \mathcal{V}}\sum _{u\in \mathcal{U}} \psi(uv)\right|^2 
	\leq q^n\#	\mathcal{U}\#V  .
\end{equation} 
This completes the verification.
\end{proof}

\begin{lem}   \label{lem2727CEES.650U}\hypertarget{lem2727CEES.650U} Let $q$ be a prime power and let $\mathcal{U}\subset \mathcal{R}$ be the group of units in the finite ring $\mathcal{R}=\F_q[x]/(x^n-1)$. If $\psi\ne1 $ be an additive character of the finite field $\F_{q^n}$, then
	\begin{equation} \label{eq2727CEES.650Ud}
		\sum _{u\in \mathcal{U}} \psi(u) 
		\leq q^{n/2} .
	\end{equation} 
\end{lem}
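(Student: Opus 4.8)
The natural route is to convert this complete character sum over the unit group $\mathcal{U}=\mathcal{R}^{\times}$ into a product of local Gauss-type sums by the Chinese Remainder Theorem. Write $x^{n}-1=\prod_{i=1}^{s}P_i(x)^{a_i}$ for the factorization into powers of distinct monic irreducibles $P_i\in\F_q[x]$ of degrees $d_i=\deg P_i$, so that $\sum_{i=1}^{s}a_id_i=n$. Then $\mathcal{R}\cong\prod_{i=1}^{s}\mathcal{R}_i$ with $\mathcal{R}_i=\F_q[x]/(P_i(x)^{a_i})$ a finite local chain ring whose maximal ideal $\mathfrak{m}_i=(P_i)$ has residue field of order $q^{d_i}$; here $\#\mathcal{R}_i=q^{a_id_i}$, $\#\mathfrak{m}_i=q^{(a_i-1)d_i}$ and $\#\mathcal{R}_i^{\times}=q^{(a_i-1)d_i}(q^{d_i}-1)$. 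Under the same isomorphism $\mathcal{U}\cong\prod_{i}\mathcal{R}_i^{\times}$, and the additive character splits as a tensor product $\psi=\bigotimes_{i=1}^{s}\psi_i$ of additive characters $\psi_i$ of $\mathcal{R}_i$, with $\psi\neq1$ exactly when $\psi_{i}\neq1$ for some $i$. Consequently $\sum_{u\in\mathcal{U}}\psi(u)=\prod_{i=1}^{s}\Sigma_i$, where $\Sigma_i=\sum_{u\in\mathcal{R}_i^{\times}}\psi_i(u)$.

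The second step is to evaluate each local factor via $\mathcal{R}_i^{\times}=\mathcal{R}_i\setminus\mathfrak{m}_i$: then $\Sigma_i=\sum_{u\in\mathcal{R}_i}\psi_i(u)-\sum_{u\in\mathfrak{m}_i}\psi_i(u)$, and by orthogonality of characters (the mechanism underlying Lemma \ref{lem2727CEES.650GS}) each subsum equals the order of the subgroup it runs over if $\psi_i$ is trivial there and is $0$ otherwise. Hence $\Sigma_i=\#\mathcal{R}_i^{\times}$ if $\psi_i=1$; $\Sigma_i=-\#\mathfrak{m}_i=-q^{(a_i-1)d_i}$ if $\psi_i\neq1$ but $\psi_i$ is trivial on $\mathfrak{m}_i$; and $\Sigma_i=0$ if $\psi_i$ is nontrivial on $\mathfrak{m}_i$. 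If some $\Sigma_i=0$ the product vanishes and we are done; otherwise every $\psi_i$ is trivial on $\mathfrak{m}_i$, and the set $I=\{i:\psi_i\neq1\}$ is nonempty because $\psi\neq1$. Multiplying the factors and using $\sum_i a_id_i=n$ gives $\bigl|\sum_{u\in\mathcal{U}}\psi(u)\bigr|\le\prod_{i\notin I}q^{a_id_i}\prod_{i\in I}q^{(a_i-1)d_i}=q^{\,n-\sum_{i\in I}d_i}$.

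The step I expect to be the real obstacle is this final bound: the computation above only yields $q^{\,n-d_{i_0}}$ for some $i_0$, and reducing it to $q^{n/2}$ needs the ``trivial part'' $\sum_{i\notin I}d_i$ of $\psi$ to be at most $n/2$, equivalently the additive order $\Ord_q\psi=d(x)$ to satisfy $\deg d\ge n/2$. The clean case is $\Ord_q\psi=x^{n}-1$ (a primitive additive character): then for each $i$ with $a_i\ge2$ the local character $\psi_i$ is nontrivial on $\mathfrak{m}_i$, so $\Sigma_i=0$, while for $a_i=1$ one has $\mathfrak{m}_i=(0)$ and $\Sigma_i=-1$; hence $\sum_{u\in\mathcal{U}}\psi(u)\in\{0,\pm1\}$ and the estimate $\le q^{n/2}$ is immediate. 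I would therefore carry out the argument under this stronger hypothesis on $\psi$, or else restrict attention to the range in which the conductor bound $\deg d\ge n/2$ holds, since that is exactly where square-root cancellation is genuinely present; the remaining technical point — handling the inseparable modulus $p\mid n$, where the $\mathcal{R}_i$ are chain rings of length $a_i\ge2$ rather than fields — is already absorbed into the local computation of $\Sigma_i$ above.
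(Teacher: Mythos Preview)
Your approach is genuinely different from the paper's. The paper does not use the Chinese Remainder decomposition at all; instead it invokes the bilinear estimate of Lemma~\ref{lem2727CEES.675I} together with the group trick: since $\mathcal{U}$ is a multiplicative group, the map $u\mapsto uv$ permutes $\mathcal{U}$ for each $v\in\mathcal{U}$, so $\sum_{v\in\mathcal{U}}\sum_{u\in\mathcal{U}}\psi(uv)=\#\mathcal{U}\cdot\sum_{u\in\mathcal{U}}\psi(u)$, and then the bilinear bound with $\mathcal{V}=\mathcal{U}$ finishes. Your CRT factorisation, by contrast, yields an \emph{exact} evaluation of the sum as a product of local factors $\Sigma_i\in\{0,-q^{(a_i-1)d_i},\#\mathcal{R}_i^{\times}\}$, which is both sharper and more transparent.

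The obstacle you flag in your final paragraph is real, and your diagnosis is correct. Taking $\gcd(n,q)=1$ so that $x^n-1$ is squarefree, and choosing $\psi$ with $\psi_1\neq 1$ on the factor $\mathcal{R}_1=\F_q[x]/(x-1)$ but $\psi_i=1$ for all $i\geq 2$, your formula gives $\bigl|\sum_{u\in\mathcal{U}}\psi(u)\bigr|=\prod_{i\geq 2}(q^{d_i}-1)=\Phi\bigl((x^n-1)/(x-1)\bigr)$, which is of order $q^{n-1}$ and exceeds $q^{n/2}$ once $n\geq 3$. So the inequality $\sum_{u\in\mathcal{U}}\psi(u)\le q^{n/2}$ cannot hold for every nontrivial additive character; it requires exactly the conductor condition $\deg(\Ord_q\psi)\ge n/2$ that you isolate. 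The paper's argument hides this: the permutation step uses multiplication in $\mathcal{R}=\F_q[x]/(x^n-1)$, while the bilinear Lemma~\ref{lem2727CEES.675I} is proved using multiplication in the field $\F_{q^n}$, and these two products do not agree under the additive identification. Your restriction to primitive $\psi$ (where the sum lies in $\{0,\pm1\}$) is therefore the honest salvage.
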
 

\begin{proof}[\textbf{Proof}] The group of units in $\mathcal{R}$ is defined by 
	\begin{equation}\label{eq2727CEES.650Uf}
		\mathcal{U}=\{u(x)\in\F_q[x]:\deg u(x)\leq n-1 \text{ and }\gcd (u(x),x^n-1)=1\}.
	\end{equation}
	Furthermore, for each fixed $v\in  \mathcal{V}\subseteq \mathcal{U}$ the map $u\longrightarrow uv$ permutes the set $\mathcal{U}$. This implies that
	\begin{equation} \label{eq2727CEES.650Ui}
		\sum _{v\in \mathcal{V}}\sum _{u\in \mathcal{U}}  \psi(uv)=  \#\mathcal{V}\sum _{u\in \mathcal{U}} \psi(u).
	\end{equation}
	
	Consequently, applying \hyperlink{lem2727CEES.675I}{Lemma} \ref{lem2727CEES.675I} yields the upper bound
	\begin{eqnarray} \label{eq2727CEES.650Uj}
		q^{n/2} \cdot \sqrt{\#U\cdot \#V }&\geq& \left|	\sum _{v\in \mathcal{V}}\sum _{u\in \mathcal{U}} \psi(uv) \right|\\[.3cm]
		&=& \#\mathcal{V}	\left|	\sum _{u\in \mathcal{U}} \psi(u) \right|\nonumber.
	\end{eqnarray} 
	Setting $\mathcal{V}=\mathcal{U}$ completes the proof.
\end{proof}
\begin{lem}   \label{lem2727CEES.650V}\hypertarget{lem2727CEES.650V} Let $q$ be a prime power and let $\mathcal{U}\subset \mathcal{R}$ be the group of units in the finite ring $\mathcal{R}=\F_q[x]/(x^n-1)$. If $\chi\ne1 $ is a multiplicative character of the finite field $\F_{q^n}$, then
	\begin{equation} \label{eq2727CEES.650Vd}
		\sum _{v\in \mathcal{V}}\sum _{u\in \mathcal{U}}\chi(u+v) 
		\ll q^{n/2} \cdot \sqrt{\#U\cdot \#V }.
	\end{equation} 
\end{lem}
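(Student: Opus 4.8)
The plan is to run the same Cauchy--Schwarz template that proves Lemma \ref{lem2727CEES.675I} and Lemma \ref{lem2727CEES.650U}, the only new ingredient being that the complete additive orthogonality relation used there is replaced by an exact evaluation of a complete \emph{mixed multiplicative} sum. Fixing a normal element identifies $\mathcal{R}=\F_q[x]/(x^n-1)$ with $\F_{q^n}$ as additive groups, so one may regard $\mathcal{U}$ and $\mathcal{V}$ as subsets of $\F_{q^n}$ and adopt the usual convention $\chi(0)=0$. Writing $\chi_{\mathcal U},\chi_{\mathcal V}$ for the indicator functions of $\mathcal U$ and $\mathcal V$, the first step is to express
\[
\sum_{v\in\mathcal V}\sum_{u\in\mathcal U}\chi(u+v)=\sum_{v\in\F_{q^n}}\chi_{\mathcal V}(v)\sum_{u\in\F_{q^n}}\chi_{\mathcal U}(u)\,\chi(u+v),
\]
and then apply the Schwarz inequality in the variable $v$, pulling out $\sum_{v}|\chi_{\mathcal V}(v)|^2=\#\mathcal V$ and reducing everything to the estimate $\sum_{v\in\F_{q^n}}\bigl|\sum_{u}\chi_{\mathcal U}(u)\chi(u+v)\bigr|^2\le q^n\,\#\mathcal U$.

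Next I would expand that square and interchange summations, obtaining $\sum_{u_0,u_1}\chi_{\mathcal U}(u_0)\chi_{\mathcal U}(u_1)\,S(u_0,u_1)$ with $S(u_0,u_1)=\sum_{v\in\F_{q^n}}\chi(u_1+v)\overline{\chi(u_0+v)}$, and the heart of the argument is to evaluate $S(u_0,u_1)$ exactly. On the diagonal $u_0=u_1$ one has $S=\sum_{v}|\chi(u_0+v)|^2=q^n-1$, the number of $v$ with $u_0+v\ne0$. Off the diagonal, on the set where $u_0+v\ne0$ one has $\overline{\chi(u_0+v)}=\chi\bigl((u_0+v)^{-1}\bigr)$, so the summand equals $\chi\!\bigl((u_1+v)(u_0+v)^{-1}\bigr)$; since $t=(u_1+v)(u_0+v)^{-1}$ is a bijection from $\{v:u_0+v\ne0\}$ onto $\F_{q^n}\setminus\{1\}$ when $u_0\ne u_1$, it follows that $S(u_0,u_1)=\sum_{t\ne1}\chi(t)=\sum_{t\in\F_{q^n}}\chi(t)-\chi(1)=-1$, using the orthogonality $\sum_{t\in\F_{q^n}}\chi(t)=0$ valid for $\chi\ne1$ (equivalently the vanishing $G(\chi,\psi_0)=0$ recorded in Lemma \ref{lem2727CEES.650GS}).

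Finally I would assemble the pieces. Because $\chi_{\mathcal U}$ is $\{0,1\}$-valued, the inner norm equals $(q^n-1)\,\#\mathcal U-(\#\mathcal U^2-\#\mathcal U)=q^n\,\#\mathcal U-\#\mathcal U^2\le q^n\,\#\mathcal U$, hence $\bigl|\sum_{v\in\mathcal V}\sum_{u\in\mathcal U}\chi(u+v)\bigr|^2\le q^n\,\#\mathcal U\,\#\mathcal V$, which is precisely the asserted bound (with implied constant $1$). I expect the main obstacle to be the exact evaluation of $S(u_0,u_1)$: one must verify carefully that $t=(u_1+v)(u_0+v)^{-1}$ is indeed a bijection onto $\F_{q^n}\setminus\{1\}$ in the off-diagonal case, and handle the zero of $\chi$ consistently at $v=-u_0$ and $v=-u_1$; once that complete-sum identity is established, the remaining manipulations are the same bookkeeping as in the proof of Lemma \ref{lem2727CEES.675I}.
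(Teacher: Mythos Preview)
Your argument is correct, but it follows a genuinely different route from the paper's. The paper expands $\chi(u+v)$ via the Fourier identity of Lemma~\ref{lem2727CEES.650A}, writing $\chi(u+v)$ as a Gauss-sum-weighted superposition of additive characters $\psi(u+v)=\psi(u)\psi(v)$; it then pulls out $|G(\chi,\psi)|=q^{n/2}$, applies Cauchy--Schwarz over the character group, and invokes Parseval-type orthogonality $\sum_{\psi}|\sum_{u}\psi(u)|^{2}\le q^{n}\#\mathcal{U}$ to finish. Your approach instead applies Cauchy--Schwarz directly in the $v$-variable and reduces to evaluating the complete sum $S(u_0,u_1)=\sum_{v}\chi(u_1+v)\overline{\chi(u_0+v)}$, which you compute exactly via the M\"obius substitution $t=(u_1+v)/(u_0+v)$ as $q^{n}-1$ on the diagonal and $-1$ off it; this is the classical Jacobi-sum type calculation. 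Both yield the bound with implied constant $1$. Your route is more self-contained (no Gauss sums, no appeal to Lemma~\ref{lem2727CEES.650A} or Lemma~\ref{lem2727CEES.650GS} beyond the trivial $\sum_{t}\chi(t)=0$), while the paper's route fits more uniformly with the other character-sum lemmas in Section~\ref{S2727CEES-I} by passing everything through the additive Fourier transform. Your bijection check and handling of the vanishing terms at $v=-u_0$, $v=-u_1$ are correct as written.
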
 

\begin{proof}[\textbf{Proof}] Let $\chi \ne1$ be a nontrivial multiplicative character over the finite field $\F_{q^n}$. The canonical complete character Gauss sum has the form
	\begin{equation}\label{eq2727CEES.650If}
		G(\chi,\psi)=	\sum _{\rho \in \F_{q^n}^{\times}}\chi(\rho)\psi(\rho),
	\end{equation} 
	where $\psi(\rho)=e^{i2\pi \tr(\rho)/p}$ and $\psi_0(\rho)=1$. Now use \hyperlink{lem2727CEES.650A}{Lemma} \ref{lem2727CEES.650A} and \eqref{eq2727CEES.650If} to rewrite the sum over the subsets $\mathcal{U}$ and $\mathcal{V}$ as
	\begin{eqnarray}\label{eq2727CEES.650Ig}
		\sum _{v\in \mathcal{V}}\sum _{u\in \mathcal{U}} \chi(u+v)
		&=&\sum _{v\in \mathcal{V}}\sum _{u\in \mathcal{U}} \frac{1}{q^n-1}\sum _{\psi \in \mathscr{C}_m(q^n)}\psi(u+v)G(\overline{\chi},\psi)\\[.3cm]
		&=& \frac{1}{q^n-1}\sum _{v\in \mathcal{V}}\sum _{u\in \mathcal{U}} \psi_0(u+v)G(\overline{\psi_0},\psi)\nonumber\\[.3cm]
		&&\hskip 1 in +  \frac{1}{q^n-1}\sum _{1\ne\psi \in \mathscr{C}_m(q^n)}G(\overline{\chi},\psi)\sum _{v\in \mathcal{V}}\psi(v)\sum _{u\in \mathcal{U}}\psi(u)\nonumber\\[.3cm]
		&=&\frac{1}{q^n-1}\sum _{1\ne\chi \in \mathscr{C}_m(q^n)}G(\overline{\chi},\psi)\sum _{v\in \mathcal{V}}\chi(v)\sum _{u\in \mathcal{U}}\chi(u)\nonumber.
	\end{eqnarray}
The trivial character $\psi_0=1$ does not contribute since $ G(\chi,\psi_0)=0$, see \hyperlink{lem2727CEES.650GS}{Lemma} \ref{lem2727CEES.650GS}. Taking absolute value and using $|G(\chi,\psi)|=q^{n/2}$ yield
	\begin{eqnarray}\label{eq2727CEES.650Ih}
		\left|	\sum _{v\in \mathcal{V}}\sum _{u\in \mathcal{U}} \psi(uv)\right|
		&\leq&\frac{1}{q^n-1}\sum _{1\ne\chi \in \mathscr{C}_m(q^n)}\left|G(\overline{\chi},\psi)\right|\left|\sum _{v\in \mathcal{V}}\chi(v)\right|\left|\sum _{u\in \mathcal{U}}\chi(u)\right|\\[.3cm]
		&\leq&\frac{q^{n/2}}{q^n-1}\sum _{1\ne\chi \in \mathscr{C}_m(q^n)}\left|\sum _{v\in \mathcal{V}}\chi(v)\right|\left|\sum _{u\in \mathcal{U}}\chi(u)\right|	\nonumber.
	\end{eqnarray}
	Next observe that
	\begin{eqnarray}\label{eq2727CEES.650Ii}
		\sum _{1\ne\chi \in \mathscr{C}_m(q^n)}\left|\sum _{u\in \mathcal{U}}\chi(u)\right|^2
		&\leq&\sum _{\chi \in \mathscr{C}_m(q^n)}\sum _{u_0\in \mathcal{U}}\chi(u_0)\sum _{u_1\in \mathcal{U}}\overline{\chi(u_1)}\\[.3cm]
		&=&\sum _{1\ne\chi \in \mathscr{C}_m(q^n)}\sum _{u_0\in \mathcal{U}}\chi(u_0)\sum _{u_1\in \mathcal{U}}\chi(u_1^{-1})\nonumber\\[.3cm]
		&=&\sum _{u_0\in \mathcal{U}}\sum _{u_1\in \mathcal{U}}\sum _{1\ne\chi \in \mathscr{C}_m(q^n)}\chi(u_0u_1^{-1})\nonumber\\[.3cm]
		&=&(q^n-1)\#	\mathcal{U}\nonumber
	\end{eqnarray}
	since for $\chi\ne1$, the character sum
	\begin{equation}\label{eq2727CEES.650Ij}
		\sum _{1\ne\chi \in \mathscr{C}_m(q^n)}\chi(u_0u_1^{-1})=\left \{
		\begin{array}{ll}
			\#	\mathcal{U} & \text{   \normalfont if } u_0u_1^{-1}=1,  \\
			0 & \text{   \normalfont if } u_0u_1^{-1}\ne1. \\
		\end{array} \right .
	\end{equation}
	An application of the Cauchy-Schwarz inequality to \eqref{eq2727CEES.650Ih} and substituting \eqref{eq2727CEES.650Ii} yield
	
	\begin{eqnarray}\label{eq2727CEES.650Il}
		\left|	\sum _{v\in \mathcal{V}}\sum _{u\in \mathcal{U}} \psi(uv)\right|^2
		&\leq&\frac{q^n}{(q^n-1)^2}\sum _{1\ne\chi \in \mathscr{C}_m(q^n)}\left|\sum _{v\in \mathcal{V}}\chi(v)\right|^2 \sum _{1\ne\chi \in \mathscr{C}_m(q^n)}\left|\sum _{u\in \mathcal{U}}\chi(u)\right|^2\\[.3cm]
		&\leq&\frac{q^n}{(q^n-1)^2}\cdot (q^n-1)\#	\mathcal{V}\cdot (q^n-1)\#	\mathcal{U}	\nonumber.
	\end{eqnarray}
	Simplifying the last inequality completes the proof. 
\end{proof}

\section{Average Orders of Mobius Functions}\label{S2222MN}\hypertarget{S2222MN}
The support $\text{supp}(\mu)$ of the Mobius function $\mu: \N\longrightarrow \{-1,0,1\}$ is the subset of squarefree integers. The \textit{Mobius randomness principle} anticipates significant cancellation on the summatory function $\sum_{n\leq x}\mu(n)f(n)$, where $f:\R\longrightarrow \C$ is a bounded function.
\begin{thm} \label{thm2222.500}\hypertarget{lem2222.500} If $\mu$ is the Mobius function, then, for any large number $x\geq1$, the following statements are true.
	\begin{enumerate} [font=\normalfont, label=(\roman*)]
		\item $\displaystyle \sum_{n <x} \mu(n)=O \left (xe^{-c\sqrt{\log x}}\right )$, \tabto{8cm} unconditionally,
		\item $\displaystyle \sum_{n<x}\frac{\mu(n)}{n}=O\left( e^{-c\sqrt{\log x}} \right ), $ \tabto{8cm} unconditionally,
	\end{enumerate}where $c>0$ is an absolute constant.
\end{thm}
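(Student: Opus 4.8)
The plan is to derive both estimates from the zero-free region of the Riemann zeta function, i.e.\ from the quantitative de la Vall\'ee Poussin form of the Prime Number Theorem. First I would recall the generating identity $\zeta(s)^{-1}=\sum_{n\ge1}\mu(n)n^{-s}$, valid for $\Re(s)>1$, together with the classical zero-free region: there is an absolute constant $c_0>0$ such that $\zeta(s)\ne0$ for $\Re(s)\ge 1-c_0/\log(|\Im(s)|+2)$, and throughout this region $1/\zeta(s)\ll\log(|\Im(s)|+2)$. (Alternatively, both parts follow immediately by quoting the Prime Number Theorem with classical error term from any standard reference; the argument below is the self-contained route.)

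For part (i), set $M(x)=\sum_{n<x}\mu(n)$. By Perron's formula truncated at height $T$,
$$M(x)=\frac{1}{2\pi i}\int_{\kappa-iT}^{\kappa+iT}\frac{x^s}{s\,\zeta(s)}\,ds+O\!\left(\frac{x\log x}{T}\right),$$
with $\kappa=1+1/\log x$. I would then shift the contour to the broken line through $\Re(s)=1-c_0/(2\log T)$, picking up no residue: the simple pole of $\zeta$ at $s=1$ makes $1/\zeta$ vanish there rather than blow up, so the integrand is holomorphic inside the shifted rectangle. Bounding the integral over the new vertical segment and the two horizontal connectors by means of $1/\zeta(s)\ll\log T$ (together with $1/|s|$) yields a contribution $\ll x^{1-c_0/(2\log T)}\log^2 T$. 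Choosing $\log T=\sqrt{\log x}$ balances this against the Perron error $x\log x/T$ and gives $M(x)\ll x\exp(-c\sqrt{\log x})$ for a suitable $c>0$, which is (i).

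For part (ii), I would pass from $M(x)$ to $\sum_{n<x}\mu(n)/n$ by Abel summation:
$$\sum_{n<x}\frac{\mu(n)}{n}=\frac{M(x)}{x}+\int_1^x\frac{M(t)}{t^2}\,dt.$$
Inserting the bound from (i), the boundary term is $\ll\exp(-c\sqrt{\log x})$; and since $\exp(-c\sqrt{\log t})$ integrated against $dt/t$ has its mass concentrated near the upper limit, the integral is also $\ll\exp(-c\sqrt{\log x})$ after possibly shrinking $c$. This gives (ii).

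The main obstacle is the rigorous execution of the contour shift in part (i): one must control $1/\zeta(s)$ uniformly on the horizontal connectors, which requires the standard bound $\zeta(s)\ll\log t$ and, crucially, a matching lower bound for $|\zeta(s)|$ just inside the zero-free region obtained from the $3$–$4$–$1$ trigonometric inequality $3+4\cos\theta+\cos 2\theta\ge 0$, and one must also choose the height $T$ so as to avoid ordinates of zeros of $\zeta$. Everything else — the truncated Perron formula, the horizontal estimates, and the partial summation in (ii) — is routine.
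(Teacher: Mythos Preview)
Your outline is the standard textbook route, and in fact the paper does not give a proof at all: its ``proof'' consists solely of citations to DLMF, De~Koninck--Luca, and Montgomery--Vaughan. What you have sketched is essentially the argument one finds in those references (particularly \cite[Chapter~6]{MV2007}), so in spirit you are reproducing exactly what the paper is pointing to rather than offering an alternative.

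One genuine gap in your part~(ii), however: after writing
\[
\sum_{n<x}\frac{\mu(n)}{n}=\frac{M(x)}{x}+\int_1^x\frac{M(t)}{t^2}\,dt,
\]
you assert that the integral is $\ll e^{-c\sqrt{\log x}}$ because ``$\exp(-c\sqrt{\log t})$ integrated against $dt/t$ has its mass concentrated near the upper limit.'' This is false: the integrand $e^{-c\sqrt{\log t}}/t$ is largest near $t=1$, and the bound from~(i) alone only gives $\int_1^x |M(t)|/t^2\,dt=O(1)$, not $o(1)$. The fix is to first use that $\sum_{n\ge1}\mu(n)/n=0$ (equivalently $\int_1^\infty M(t)/t^2\,dt=0$, which follows by letting $x\to\infty$ in the identity above once you know $M(x)=o(x)$), and then bound the \emph{tail} $\int_x^\infty M(t)/t^2\,dt\ll\int_x^\infty e^{-c\sqrt{\log t}}\,dt/t\ll\sqrt{\log x}\,e^{-c\sqrt{\log x}}$, which is $O(e^{-c'\sqrt{\log x}})$ for any $c'<c$. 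With that correction your argument goes through.
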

\begin{proof}  See {\color{red} \cite[Equation 27.11.12]{DLMF}}, {\color{red} \cite[p.\ 6]{DL2012}}, {\color{red} \cite[p.\ 182]{MV2007}}, et alii.   
\end{proof}
The nontrivial cancellation in the finite sum $\sum_{n\leq x}\mu(n)f(n)$, where the function $f(n)=1/n$, is derived by partial summation from the prime number theorem $\sum_{n\leq x}\mu(n)=o(x)$, the precise statement is in \hyperlink{lem2222.500}{Theorem} \ref{thm2222.500}. Another simple case for the fractional part function $f(z)=\{z\}$ is derived from the largest integer $[z]$ function and the identity \eqref{eq2222.600f}. Many other special cases have been proved in the literature.
\begin{lem} \label{lem2222.600}\hypertarget{lem2222.600} If $x\geq 1$ is a large number, $\mu$ is the Mobius function and $f(z)=\{z\}$ is the fractional part function, then
	\begin{equation}\label{eq2222.600d}
		\sum_{n\leq x }\mu(n)\bigg\{\frac{x}{n}\bigg\} =-1 +O \left (xe^{-c\sqrt{\log x}}\right ) ,
	\end{equation}
	where $c>0$ is an absolute constant. 
\end{lem}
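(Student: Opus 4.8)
The plan is to split the fractional part into its two natural pieces, each of which is controlled by material already in the excerpt: the quantitative cancellation of $\sum_{n\le x}\mu(n)/n$ from \hyperlink{lem2222.500}{Theorem}~\ref{thm2222.500}(ii), and the exact Möbius identity \eqref{eq2222.600f}. Writing $\{x/n\}=x/n-[x/n]$, split
\begin{equation*}
\sum_{n\le x}\mu(n)\left\{\frac{x}{n}\right\}
= x\sum_{n\le x}\frac{\mu(n)}{n}-\sum_{n\le x}\mu(n)\left[\frac{x}{n}\right].
\end{equation*}
The first term is immediate: by \hyperlink{lem2222.500}{Theorem}~\ref{thm2222.500}(ii) we have $\sum_{n\le x}\mu(n)/n=O(e^{-c\sqrt{\log x}})$, so multiplying by $x$ gives a contribution $O\!\left(xe^{-c\sqrt{\log x}}\right)$ with the same absolute constant $c$.

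For the second term the plan is the standard Dirichlet interchange of summation. Since $[x/n]=\#\{m\le x:n\mid m\}$, one has
\begin{equation*}
\sum_{n\le x}\mu(n)\left[\frac{x}{n}\right]
=\sum_{n\le x}\mu(n)\sum_{\substack{m\le x\\ n\mid m}}1
=\sum_{m\le x}\sum_{d\mid m}\mu(d)
=\sum_{m\le x}\mathbf{1}_{\{m=1\}}=1,
\end{equation*}
where the inner sum collapses by the elementary identity $\sum_{d\mid m}\mu(d)=\mathbf{1}_{\{m=1\}}$ (namely \eqref{eq2222.600f}), valid for every $x\ge1$. Combining the two displays yields
\begin{equation*}
\sum_{n\le x}\mu(n)\left\{\frac{x}{n}\right\}=-1+O\!\left(xe^{-c\sqrt{\log x}}\right),
\end{equation*}
which is the assertion.

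There is essentially no genuine obstacle here: the only non-elementary ingredient is the Mertens-type cancellation in \hyperlink{lem2222.500}{Theorem}~\ref{thm2222.500}, which is already recorded (and which itself follows from the prime number theorem with classical error term by partial summation). The only points that deserve a line of care are bookkeeping ones: the constant $c$ in the error term is inherited verbatim from \hyperlink{lem2222.500}{Theorem}~\ref{thm2222.500}, and the explicit $-1$ is the exact secondary term produced by the Möbius identity rather than an artifact of the estimate — it is kept in the statement even though, for large $x$, it is absorbed by the growing error term $O\!\left(xe^{-c\sqrt{\log x}}\right)$.
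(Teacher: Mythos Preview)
Your proof is correct and follows essentially the same route as the paper: both split via $\{x/n\}=x/n-[x/n]$, invoke Theorem~\ref{thm2222.500}(ii) for the first piece, and use the identity $\sum_{n\le x}\mu(n)[x/n]=1$ for the second. The only cosmetic difference is that you re-derive this identity by Dirichlet interchange, whereas the paper simply quotes it (this is in fact what \eqref{eq2222.600f} records, so your parenthetical citation of \eqref{eq2222.600f} as the divisor-sum identity $\sum_{d\mid m}\mu(d)=\mathbf{1}_{\{m=1\}}$ is slightly off---\eqref{eq2222.600f} is its consequence, not the identity itself).
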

\begin{proof} Start with the identity
	\begin{align}\label{eq2222.600f}
		1&=\sum_{n\leq x}\mu(n)\bigg[\frac{x}{n}\bigg]\\[.3cm]
		&=x\sum_{n\leq x}\frac{\mu(n)}{n}-\sum_{n\leq x}\mu(n)\bigg\{\frac{x}{n}\bigg\}\nonumber,
	\end{align}
	see { \color{red}\cite[Theorem 3.12]{AT1976}}, the second line uses the largest integer function $[z]=z-\{z\}$. Applying \hyperlink{lem2222.500}{Theorem} \ref{thm2222.500} to the first finite sum on the left side and rearranging it complete the verification.
\end{proof}

\subsection{Estimates of Exponential Sums over Subgroups} 

\begin{lem} \label{lem7770.300}\hypertarget{lem7770.300}  Let \(p\geq 2\) be a large prime and let $\tau$ be a primitive root  mod \(p\). Assume the element $\alpha\ne0$ is not a primitive root. Then, 
	\begin{equation} \label{eq7770.l00c}
		\sum _{1\leq t_1\leq q^n-1}\sum _{\substack{1\leq s\leq q^n-1\\\gcd (s,q^n-1)=1}} e^{\frac{-i 2\pi(s-\log_{\tau}\alpha)t_1}{q^n}}=O\bigg(q^ne^{-c\sqrt{\log q^n}}\bigg)\nonumber,
	\end{equation} 
	where $c>0$ is an absolute constant.
\end{lem}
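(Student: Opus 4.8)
The plan is to collapse the inner sum over $t_1$ by orthogonality of the additive characters modulo $q^n$, then use the hypothesis that $\alpha$ is not a primitive root to kill the resulting diagonal term, and finally recast what is left as a M\"obius sum over the divisors of $q^n-1$ so that Theorem \ref{thm2222.500} and Lemma \ref{lem2222.600} can be brought to bear.

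First I would interchange the two summations and abbreviate $a=\log_{\tau}\alpha$, so that the left-hand side becomes $\sum_{\substack{1\le s\le q^n-1\\ \gcd(s,q^n-1)=1}}\big(\sum_{1\le t_1\le q^n-1}e^{-i2\pi(s-a)t_1/q^n}\big)$. For fixed $s$ the bracketed sum is $\big(\sum_{0\le t_1\le q^n-1}e^{-i2\pi(s-a)t_1/q^n}\big)-1$, which by orthogonality equals $q^n-1$ when $q^n\mid s-a$ and equals $-1$ otherwise. Hence the double sum reduces to
\[
q^n\cdot\#\{1\le s\le q^n-1:\ \gcd(s,q^n-1)=1,\ q^n\mid s-a\}\;-\;\#\{1\le s\le q^n-1:\ \gcd(s,q^n-1)=1\}.
\]
Next I would invoke the non-primitivity hypothesis: since $\tau$ is a primitive root while $\alpha$ is not, the exponent $a$ satisfies $\gcd(a,q^n-1)>1$; because $1\le s\le q^n-1<q^n$, the congruence $q^n\mid s-a$ forces $s=a$, which violates $\gcd(s,q^n-1)=1$. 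Thus the first counting set is empty, the $q^n$-sized term vanishes, and the whole expression collapses to $-\#\{1\le s\le q^n-1:\gcd(s,q^n-1)=1\}$.

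It remains to estimate this residual count. The natural move is to detect the coprimality condition by $\mathbf 1_{\gcd(s,q^n-1)=1}=\sum_{d\mid\gcd(s,q^n-1)}\mu(d)$ and interchange, turning the count into $\sum_{d\mid q^n-1}\mu(d)\lfloor (q^n-1)/d\rfloor$, and then to split off the integer parts with $\lfloor z\rfloor=z-\{z\}$, so that the count becomes a Mertens-type piece involving $\sum_{d\mid q^n-1}\mu(d)/d$ together with a fractional-part piece of the form $\sum_{d}\mu(d)\{x/d\}$; the unrestricted versions of exactly these two sums are controlled by Theorem \ref{thm2222.500}(ii) and Lemma \ref{lem2222.600}, which would give the asserted $O\!\big(q^n e^{-c\sqrt{\log q^n}}\big)$.

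The main obstacle lies in this last step, and it is genuinely delicate rather than bookkeeping. In Theorem \ref{thm2222.500} and Lemma \ref{lem2222.600} the M\"obius sums run over \emph{all} integers up to $x$, whereas here the variable $d$ is confined to the divisors of $q^n-1$, for which $\{(q^n-1)/d\}=0$ and $\sum_{d\mid q^n-1}\mu(d)\lfloor(q^n-1)/d\rfloor=\varphi(q^n-1)$ on the nose. So to reach a bound of the claimed exponential-saving size one must either (i) show that completing the divisor-restricted M\"obius sum to an unrestricted one over $n\le x$ introduces only an error of size $O\!\big(q^n e^{-c\sqrt{\log q^n}}\big)$, or (ii) re-run the partial-summation argument behind Lemma \ref{lem2222.600} directly for the divisor-restricted sum; in either case the crux is a Mertens-type estimate strong enough to pull the residual main term down from the Euler-phi scale to the $e^{-c\sqrt{\log q^n}}$ scale. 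By contrast, the orthogonality computation in the first step and the use of $\gcd(a,q^n-1)>1$ in the second are routine.
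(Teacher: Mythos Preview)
Your approach is exactly the paper's: detect coprimality via $\sum_{d\mid s,\,d\mid q^n-1}\mu(d)$, swap sums, use the non-primitivity of $\alpha$ to rule out the diagonal, collapse the $t_1$-sum to $-1$, and land on a M\"obius sum that one wants to treat via Theorem~\ref{thm2222.500} and Lemma~\ref{lem2222.600}. The only difference is that the paper does the M\"obius-detection \emph{before} the $t_1$-sum rather than after, but the resulting expression is the same.

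The obstacle you flag is real, and it is precisely the point where the paper's argument and yours diverge. After the swap the paper writes the outer sum as $\sum_{d\le q^n-1}\mu(d)$ \emph{without} the constraint $d\mid q^n-1$, and it is this unrestricted sum to which Theorem~\ref{thm2222.500}(ii) and Lemma~\ref{lem2222.600} are applied; this is your option~(i). You are right to be uneasy: with the divisibility constraint in place the expression is exactly $-\varphi(q^n-1)$, and since $\varphi(q^n-1)$ can be as large as $q^n-2$ (e.g.\ when $q^n-1$ is prime) it is not $O\!\big(q^n e^{-c\sqrt{\log q^n}}\big)$. The passage from the restricted to the unrestricted sum is therefore not a cosmetic completion but changes the value by a main term of size $\asymp q^n$, and the paper offers no justification for it. So your ``genuinely delicate'' step is not merely delicate---as written it is a gap that your argument and the paper's share, and neither your options~(i) nor~(ii) can close it without additional input.
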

\begin{proof}[\textbf{Proof}] Employing the characteristic function
	\begin{align} \label{eq7770.l00d}
		\sum_{\substack{d\mid m\\d\mid n}}\mu(d)=
		\left \{\begin{array}{ll}
			1 & \text{   \normalfont if } \gcd(m,n)=1,  \\
			0 & \text{   \normalfont if } \gcd(m,n)\ne1,\\
		\end{array} \right.
	\end{align}
	
	of relatively prime integers to remove the double index in the first line of \eqref{eq7770.l00f} and switching the order of summation in the second line yield
	\begin{eqnarray} \label{eq7770.l00f}
		S(x)&=&\sum _{1\leq t\leq q^n-1}\sum _{\substack{1\leq s\leq q^n-1\\\gcd (s,q^n-1)=1}} e^{\frac{-i 2\pi(s-\log_{\tau}\alpha)t}{q^n}} \nonumber\\[.3cm]  
		&= &\sum _{1\leq t\leq q^n-1,}\sum _{1\leq s\leq q^n-1} e^{\frac{-i 2\pi(s-\log_{\tau}\alpha)t}{q^n}}\sum_{\substack{d\mid s\\d\mid q^n-1}}\mu(d)\nonumber\\[.3cm]
		&= &\sum_{d\leq  q^n-1}\mu(d)\sum _{1\leq m< (q^n-1)/d,}\sum _{1\leq t< q^n-1} e^{\frac{i2\pi t\left (dm-\log_{\tau}\alpha\right)}{q^n}}.
	\end{eqnarray} 
	
	By hypothesis $s-\log_{\tau}\alpha=dm-\log_{\tau}\alpha\ne0$ for any $s \geq 1$ such that $\gcd(s,q^n-1)=1$. Thus, evaluating the inner sum indexed by $t$ followed by the evaluation of the resulting inner sum indexed by $m$ yield
	\begin{eqnarray} \label{eq7770.l00h}
		S(x)&=&\sum_{d\leq  q^n-1}\mu(d)\sum _{1\leq m< (q^n-1)/d}(-1) \\[.3cm]  
		&= &-\sum_{d\leq  q^n-1}\mu(d)\left [\frac{q^n-1}{d}\right ]\nonumber.
	\end{eqnarray} 
	Replace the the largest integer function $[z]=z-\{z\}$ to obtain
	\begin{eqnarray} \label{eq7770.l00i}
		S(x)&=&-\sum_{d\leq  q^n-1}\mu(d)\Bigg(\frac{q^n-1}{d}-\Bigg\{\frac{q^n-1}{d}\Bigg\}\Bigg) \\[.3cm]  
		&= &-(q^n-1)\sum_{d\leq  q^n-1}\frac{\mu(d)}{d}+ \sum_{d\leq  q^n-1}\mu(d)\Bigg\{\frac{q^n-1}{d}\Bigg\}\nonumber.
	\end{eqnarray}
	
	Applying \hyperlink{thm2222.500}{Theorem} \ref{thm2222.500} to the first finite sum and \hyperlink{lem2222.600}{Lemma} \ref{lem2222.600} to the second finite sum return
	\begin{eqnarray} \label{eq7770.l00j}
		S(x)&=&O\Bigg((q^n-1)e^{-c_1\sqrt{\log q^n-1}}\Bigg) +\Bigg(-1+O\bigg((q^n-1)e^{-c_2\sqrt{\log (q^n-1)}}\bigg)\Bigg)\nonumber\\[.3cm]  
		&= &O\bigg(q^ne^{-c_2\sqrt{\log q^n}}\bigg),
	\end{eqnarray} 
	where $c_1,c_2>0$ are constants. 
\end{proof}

\section{Totient Functions over the Integers} \label{C5005}\hypertarget{C5005}
Various identities and properties of the totient functions are required in analytic number theory, for example, in the analysis of the primitive root, and the analysis of elliptic groups of points of elliptic curves. Some of the essential analytic methods are introduced here. 

\subsection{Basic Definitions and Properties} \label{S5005D}\hypertarget{S5005D}
Let $p_1, p_2, p_3, \ldots, p_k$ be the sequence of prime numbers in increasing order, and let $n=p_1^{e_1} p_2^{e_2} \cdots p_t^{e_t}$ be the prime decomposition of an arbitrary integer.  
\begin{dfn}\label{dfn5005.100d}\hypertarget{dfn5005.100d}
	The Euler totient function over the finite ring $\mathbb{Z}/n\mathbb{Z}$ is defined by  $$\varphi (n)=\sum_{\substack{k<n\\\gcd(k,n)=1}}1.$$ It counts the number of relatively prime integers up to $n\geq1$.
\end{dfn}

For each $n\in \mathbb{N}$, the totient functions have many identities, for example, there are additive identity, multiplicative identity, and other identities. Some of the identities and properties of the totient functions are recorded in this section.

\begin{lem}  \label{lem5005.100e}\hypertarget{lem5005.100e} The totient function has the following representations.
	\begin{enumerate} [font=\normalfont, label=(\roman*)]
		\item  
		$\displaystyle 	
		\varphi (n)=n\sum _{d \mid  n} \frac{\mu (d)}{d},$\tabto{8cm} additive identity.
		\item  
		$\displaystyle 	
		\varphi (n)=n\prod_{p \mid n}\left (1-\frac{1}{p}\right ),$\tabto{8cm} multiplicative identity.
		\item  
		$\displaystyle 	\varphi (n)=\sum _{1\leq m< n,} \sum _{\substack{d \mid m \\ d \mid n}} \mu (d)       ,$\tabto{8cm} counting identity.
	\end{enumerate}
\end{lem}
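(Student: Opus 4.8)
The plan is to establish the three identities in the order (iii), (i), (ii), deriving each from the previous one together with only elementary properties of the M\"obius function; no deep input is needed.

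First I would prove the counting identity (iii) directly from Definition \ref{dfn5005.100d}. Writing $\varphi(n)=\sum_{1\le m<n}[\gcd(m,n)=1]$ (the omitted term $m=n$ being harmless, since $\gcd(n,n)=n>1$ for $n\ge 2$ and $n=1$ is trivial), I would substitute the M\"obius characteristic function of coprimality from \eqref{eq7770.l00d}, namely $\sum_{d\mid m,\,d\mid n}\mu(d)=[\gcd(m,n)=1]$, observing that the pair of conditions $d\mid m$ and $d\mid n$ is exactly $d\mid \gcd(m,n)$. This gives (iii) at once.

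Next I would deduce the additive identity (i) by interchanging the order of summation in (iii). Since the inner divisor condition forces $d\mid n$, one obtains
\[
\varphi(n)=\sum_{d\mid n}\mu(d)\sum_{\substack{1\le m\le n\\ d\mid m}}1=\sum_{d\mid n}\mu(d)\cdot\frac{n}{d}=n\sum_{d\mid n}\frac{\mu(d)}{d},
\]
the only point to verify being that the number of multiples of $d$ not exceeding $n$ equals $n/d$ whenever $d\mid n$. Finally, (ii) follows from (i) by expanding the divisor sum as an Euler product over the primes dividing $n$: because $\mu$ is multiplicative and supported on squarefree integers, $\sum_{d\mid n}\mu(d)/d$ factors as $\prod_{p\mid n}\bigl(1+\mu(p)/p\bigr)=\prod_{p\mid n}\bigl(1-1/p\bigr)$, and multiplying through by $n$ yields the multiplicative identity. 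The main (indeed only) obstacle is bookkeeping --- keeping the endpoint conventions on the range of $m$ straight and invoking the fundamental theorem of arithmetic to pass from the sum over squarefree divisors to the product over prime divisors --- so I expect the proof to be short.
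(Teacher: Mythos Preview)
Your proposal is correct and uses the same core idea as the paper---the M\"obius characteristic function for coprimality---to obtain (iii); you then go further and derive (i) and (ii) from (iii) by a swap of summation and an Euler product, whereas the paper's proof merely cites \cite[Theorem 2.4]{AT1976} for (ii) and does not spell out (i) at all. So your argument is more self-contained but follows the same route in spirit; the only minor point to keep clean is the endpoint bookkeeping you already flagged (extending the range $1\le m<n$ to $1\le m\le n$ before swapping sums, which is harmless for $n\ge 2$ since $\sum_{d\mid n}\mu(d)=0$).
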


\begin{proof}[\textbf{Proof}] (ii) The product formula 
	\begin{equation}
		\frac{	\varphi(n)}{n}=\prod_{p\mid n}\left( 1-\frac{1}{p}\right),
	\end{equation} where $p\geq2$ is a prime divisor of $n$ is proved in {\color{red}\cite[Theorem 2.4]{AT1976}}. (iii) Employ the characteristic function of relatively prime integers
	\begin{equation} \label{eq997.103d}
		\sum _{d \mid  n} \mu (d) =
		\begin{cases}
			1 & \gcd (d,n)=1, \\
			0 & \gcd (d,n)\neq 1, \\
		\end{cases}
	\end{equation}
	to derive this identity.
\end{proof}
\begin{lem} \label{lem5005.100h}\hypertarget{lem5005.100h} Given a pair of integers $m,n \in \N$, the totient function satisfies the followings relations.
	\begin{enumerate} [font=\normalfont, label=(\roman*)]
		\item  
		$\displaystyle \varphi(mn)=\varphi(m)\varphi(n)$, \tabto{9cm} \text {\normalfont if}  $\gcd(m,n)=1.$
		\item 
		$\displaystyle \varphi(mn)=\frac{d}{\varphi(d)}\varphi(m)\varphi(n)$, \tabto{9cm} \text {\normalfont where}$ \gcd(m,n)=d.$
		\item 
		$\displaystyle \varphi(m)\varphi(n)=\sum_{d \mid \gcd(m,n)} \varphi(mn/d)\mu(d),$ \tabto{9cm} \text {\normalfont for any }$ m, n\geq 1.$
		\item 
		$\displaystyle \varphi(m) \mid \varphi(n)$, \tabto{9cm} \text{\normalfont if} $m \mid  n.$
		\item 
		$\displaystyle \varphi(n) \equiv 0 \bmod 2$, \tabto{9cm} \text{\normalfont if} $n \geq 3.$
	\end{enumerate}
\end{lem}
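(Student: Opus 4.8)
The plan is to deduce all five relations from two facts already in hand: the product formula $\varphi(n)=n\prod_{p\mid n}(1-1/p)$ of \hyperlink{lem5005.100e}{Lemma}~\ref{lem5005.100e}, and its immediate consequence $\varphi(p^a)=p^{a-1}(p-1)$ for prime powers with $a\geq1$ (and $\varphi(1)=1$). Parts (i), (iv) and (v) are then routine; the only step needing genuine care is the Möbius-type convolution identity (iii).

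For (i), when $\gcd(m,n)=1$ the set of primes dividing $mn$ is the disjoint union of the primes dividing $m$ and those dividing $n$, so $\prod_{p\mid mn}(1-1/p)=\left(\prod_{p\mid m}(1-1/p)\right)\left(\prod_{p\mid n}(1-1/p)\right)$; multiplying through by $mn$ gives $\varphi(mn)=\varphi(m)\varphi(n)$. For (ii), write $d=\gcd(m,n)$; the primes dividing $mn$ are the primes dividing $m$ together with those dividing $n$, the ones counted twice being exactly the primes dividing $d$, whence $\prod_{p\mid mn}(1-1/p)=\left(\prod_{p\mid m}(1-1/p)\right)\left(\prod_{p\mid n}(1-1/p)\right)\big/\prod_{p\mid d}(1-1/p)$. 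Substituting the product formula for each of $\varphi(mn)$, $\varphi(m)$, $\varphi(n)$, $\varphi(d)$ and rearranging the powers of $m$, $n$, $d$ yields $\varphi(mn)=\tfrac{d}{\varphi(d)}\varphi(m)\varphi(n)$.

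For (iii), I would reduce to the prime-power case: both sides of $\varphi(m)\varphi(n)=\sum_{e\mid\gcd(m,n)}\varphi(mn/e)\mu(e)$ factor over coprime decompositions of the pair $(m,n)$ by (i), so it suffices to take $m=p^a$ and $n=p^b$ with, say, $a\leq b$. Then $\gcd(m,n)=p^a$, only the terms $e=1$ and (if $a\geq1$) $e=p$ survive, and the right-hand side collapses to $\varphi(p^{a+b})-\varphi(p^{a+b-1})$, which one checks equals $p^{a-1}(p-1)\,p^{b-1}(p-1)=\varphi(p^a)\varphi(p^b)$ via $\varphi(p^k)=p^{k-1}(p-1)$; the case $a=0$ is immediate. (Equivalently, one can obtain (iii) directly from (ii) by Möbius inversion over the divisors of $d$, using Lemma~\ref{lem5005.100e}(i).) For (iv), by (i) one has $\varphi(m)=\prod_p\varphi(p^{v_p(m)})$ and $\varphi(n)=\prod_p\varphi(p^{v_p(n)})$; since $m\mid n$ forces $v_p(m)\leq v_p(n)$ for every prime $p$ and $\varphi(p^a)\mid\varphi(p^b)$ whenever $a\leq b$, the divisibility $\varphi(m)\mid\varphi(n)$ follows factor by factor. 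For (v), if $n$ has an odd prime divisor $p$ then $2\mid p-1\mid\varphi(p^{v_p(n)})\mid\varphi(n)$ by (iv); otherwise $n=2^a$ with $a\geq2$ because $n\geq3$, and $\varphi(2^a)=2^{a-1}$ is even.

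The main obstacle, such as it is, lies in (iii): one must justify that both sides are multiplicative functions of the pair $(m,n)$ before reducing to prime powers, or else carry out the Möbius inversion against (ii) cleanly. No real difficulty is anticipated elsewhere, since (i), (ii), (iv) and (v) reduce at once to the product formula and the explicit value of $\varphi$ at prime powers.
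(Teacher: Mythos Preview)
The paper states this lemma without proof; it is listed among the standard properties of $\varphi$ in Section~\ref{S5005D} and no argument is supplied. Your proposal is correct and follows the natural route via the product formula of Lemma~\ref{lem5005.100e}(ii), so there is nothing in the paper to compare it against beyond noting that your argument would serve perfectly well as the missing proof.
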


The more general power sum of relatively prime integers can be computed using the same approach as in the proof of identity (iii). 

\begin{lem}  \label{lem5005.100i}\hypertarget{lem5005.100i} The inverse totient function has the following representations.
	\begin{enumerate} [font=\normalfont, label=(\roman*)]
		\item  
		$\displaystyle 	
		\frac{1}{\varphi (n)}=\frac{1}{n}\sum _{d \mid  n} \frac{\mu^2 (d)}{\varphi (d)},$\tabto{8cm} additive inverse identity.
		\item  
		$\displaystyle 	
		\frac{1}{\varphi (n)}=\frac{1}{n}\prod _{p \mid  n} \left (1-\frac{1}{p}\right )^{-1},$\tabto{8cm} multiplicative inverse identity.
	\end{enumerate}
\end{lem}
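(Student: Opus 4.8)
The plan is to dispatch part (ii) immediately from the product formula and then reduce part (i) to a one-line check on prime powers by multiplicativity. For (ii): by Lemma \ref{lem5005.100e}(ii) one has $\varphi(n)=n\prod_{p\mid n}(1-1/p)$, and since $\varphi(n)\neq 0$ for $n\geq 1$ and $1-1/p\neq 0$ for every prime $p$, dividing through gives $1/\varphi(n)=(1/n)\prod_{p\mid n}(1-1/p)^{-1}$ with nothing further to verify.

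For (i) the plan is to set $g(d)=\mu^2(d)/\varphi(d)$ and observe that $g$ is multiplicative, being a product of the multiplicative functions $\mu^2$ and $1/\varphi$ (the latter multiplicative because $\varphi$ is, by Lemma \ref{lem5005.100h}(i), and $\varphi(1)=1$). Hence the summatory function $G(n)=\sum_{d\mid n}g(d)$ is multiplicative, as is $n/\varphi(n)$, so it suffices to check $G(p^{k})=p^{k}/\varphi(p^{k})$ for each prime power. On the left only the divisors $d=1$ and $d=p$ are squarefree, so $G(p^{k})=1+\tfrac{1}{p-1}=\tfrac{p}{p-1}$; on the right $p^{k}/\varphi(p^{k})=p^{k}/\bigl(p^{k-1}(p-1)\bigr)=\tfrac{p}{p-1}$. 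Agreement on prime powers then forces $\sum_{d\mid n}\mu^2(d)/\varphi(d)=n/\varphi(n)$ for all $n\geq 1$, and dividing by $n$ yields the stated identity; note that it is also consistent with (ii), since $\prod_{p\mid n}(1-1/p)^{-1}=\prod_{p\mid n}\bigl(1+\tfrac{1}{p-1}\bigr)$ is exactly the Euler product of $G(n)$.

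There is essentially no obstacle here: the only point requiring a word of care is that $\mu^2(d)=0$ annihilates every non-squarefree divisor, so the local factor at each $p\mid n$ collapses to $1+\tfrac{1}{p-1}$ irrespective of the exponent of $p$ in $n$; everything else is routine bookkeeping with multiplicative functions. An alternative, avoiding the appeal to multiplicativity of the summatory function, would be to expand $\sum_{d\mid n}\mu^2(d)/\varphi(d)$ directly as a finite product over the primes dividing $n$ and match it term by term with the expression from part (ii).
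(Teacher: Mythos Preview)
Your proof is correct. The paper states this lemma without proof, so there is nothing to compare against; your argument---inverting the product formula for (ii) and reducing (i) to a prime-power check via multiplicativity of $\mu^2/\varphi$ and of $n/\varphi(n)$---is the standard route and is carried out cleanly.
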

The Mobius inverse pair, which has the form
\begin{equation}\label{eq5005.100d}
	n=\sum_{d\mid n}	\varphi (d) \quad \text{ and }\quad	\varphi (n)=n\sum_{d\mid n} \frac{\mu (d)}{d},
\end{equation}
is used in many calculations. 
There are many generalizations of the totient function. One of the simplest generalizations is the generalized Mobius inverse pair 
\begin{equation}\label{eq5005.100j}
	n^s=\sum_{d\mid n}	\varphi_s (d) \quad \text{ and }\quad	\varphi_s (n)=n^s\sum_{d\mid n} \frac{\mu (d)}{d^{s}},
\end{equation}
where $s\in \C$ is a complex number. These are obtained via the generalized Mobius inverse formula.

\subsection{Extreme Values of the Totient Function} \label{S9977TFI}
Some estimates for the extreme values of the Euler totient function are stated in this subsection. Currently the best unconditional upper bound of this arithmetical function is 
\begin{equation} \label{eq9977TFI.175}
	\frac{n}{\varphi(n)} < 
	e^{\gamma} \log \log n +\frac{5}{2\log \log n}
\end{equation}  
for any integer $n \in \N$ with one exception for $n =2\cdot 3 \cdots 23$, see {\color{red}\cite[Theorem 15]{RS1962}}. The maximal values of the Euler function occurs at the prime arguments. Id est, $\varphi(p)=p-1<p$. There are other subsets of integers that have nearly maximal values. In fact, asymptotically, these integers and the primes number have the same order of magnitudes.

\begin{lem} \label{lem9977TFI.300}\hypertarget{lem9977TFI.300}{Lemma} Let $ x \geq 1$ be a large number, and let $n=1+\prod_{p \leq \log x}p$. Then
	\begin{enumerate} [font=\normalfont, label=(\roman*)]
		\item $\varphi(n)=n+O\left (n/\log \log n \right)$,\tabto{7cm}infinitely often as $n\to\infty.$
		\item $\varphi(n+1)=n/2+O\left (n/\log n \right)$,\tabto{7cm}infinitely often as $n\to\infty.$
	\end{enumerate}		 
\end{lem}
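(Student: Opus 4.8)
The plan is to exploit the single structural feature of $n=1+\prod_{p\le\log x}p$. Write $P=\prod_{p\le\log x}p$ for the primorial. Since every prime $p\le\log x$ divides $P$, it divides neither $n=P+1$ nor, if it is odd, $n+1=P+2$; hence the least prime factor of $n$ exceeds $\log x$, and every odd prime factor of $n+1$ exceeds $\log x$ as well. I also record the crude size estimate: by Chebyshev's bounds $\log P=\theta(\log x)\asymp\log x$, so $\log n\asymp\log x$ and therefore $\log\log n\asymp\log\log x$ (with the prime number theorem one in fact gets $\log n=(1+o(1))\log x$). These two facts --- ``all prime divisors are large'' and ``$n$ has size roughly $x$'' --- are the whole input; the rest is bookkeeping with $\varphi$.

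For (i): every prime divisor of $n$ lies in $(\log x,n]$, so $(\log x)^{\omega(n)}\le n$ gives $\omega(n)\le \log n/\log\log x\ll \log x/\log\log x$. By the multiplicative identity of Lemma~\ref{lem5005.100e},
\[
1\ \ge\ \frac{\varphi(n)}{n}=\prod_{p\mid n}\Bigl(1-\frac1p\Bigr)\ \ge\ 1-\sum_{p\mid n}\frac1p\ \ge\ 1-\frac{\omega(n)}{\log x}\ \ge\ 1-\frac{c}{\log\log n},
\]
whence $\varphi(n)=n+O(n/\log\log n)$. Because $n=1+\prod_{p\le\log x}p$ takes a strictly larger value each time $\log x$ crosses a prime, this holds along an infinite increasing sequence of $n$, which is the meaning of ``infinitely often''.

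For (ii): write $n+1=P+2=2^a m$ with $m$ odd and $a\ge1$ (allowing $m=1$). By Lemma~\ref{lem5005.100h}, $\varphi(n+1)=\varphi(2^a)\varphi(m)=2^{a-1}\varphi(m)=\frac{n+1}{2}\cdot\frac{\varphi(m)}{m}$. Every prime dividing $m$ is odd, hence $>\log x$, so exactly the estimate of (i) applies to $m$: $\omega(m)\ll\log x/\log\log x$ and $\varphi(m)/m=1+O(1/\log\log n)$. Therefore $\varphi(n+1)=\frac{n+1}{2}+O(n/\log\log n)=\frac n2+O(n/\log\log n)$, again along the same infinite sequence of $n$. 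The point needing care is the error term $O(n/\log n)$ asserted in (ii), which is sharper than the $O(n/\log\log n)$ produced above; getting it would require a better handle on $\sum_{p\mid m}1/p$ than the worst-case bound $\omega(m)/\log x$ --- i.e.\ ruling out that the small prime factors of $m$ all cluster just above $\log x$ --- and I see no mechanism forcing that. Absent such an argument I would either state $O(n/\log\log n)$ uniformly in both parts or treat the $\log n$ in (ii) as a misprint; everything else in the proof is routine.
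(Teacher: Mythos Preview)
Your argument for (i) is correct and is essentially the paper's approach: both use that every prime factor of $n=P+1$ exceeds $\log x\asymp\log n$, and then control $\prod_{p\mid n}(1-1/p)$. Your route via the crude bounds $\omega(n)\le\log n/\log\log x$ and $\sum_{p\mid n}1/p\le\omega(n)/\log x$ is in fact cleaner than the paper's, which bounds the product below by $\prod_{\log n<p\le 2\log n}(1-1/p)$ and appeals to Mertens.

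For (ii) you and the paper diverge, and your caution is justified. The paper's proof of (ii) records only the \emph{upper} bound
\[
\varphi(n+1)=(n+1)\prod_{p\mid n+1}\Bigl(1-\frac1p\Bigr)\le \frac{n+1}{2}\Bigl(1-\frac{1}{\log n}\Bigr)=\frac{n}{2}+O\!\left(\frac{n}{\log n}\right),
\]
using the factor $2$ and a single prime $q>\log n$; it does not supply a matching lower bound. Your analysis shows why: writing $n+1=2^a m$ with $m$ odd, the method yields only $\varphi(m)/m=1+O(1/\log\log n)$, because nothing prevents $m$ from having $\asymp \log n/\log\log n$ prime factors clustered just above $\log x$. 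So the two-sided estimate the argument actually delivers is $\varphi(n+1)=n/2+O(n/\log\log n)$, exactly as you obtained; the sharper $O(n/\log n)$ in the statement is not substantiated by either proof and should be read as you suggest.
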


\begin{proof}[\textbf{Proof}] (i) Observe that $\log n \geq \sum_{p \leq \log x} \log p$, so that $p \leq \log x \leq 2\log n$. Hence, a prime divisor $q \mid n=1+\prod_{p \leq \log x}p$ implies that $q>\log n$. Consequently, there is the upper bound
	\begin{eqnarray}\label{eq997.11}
		\varphi(n)&= &n \prod_{p\mid n}\left ( 1-\frac{1}{p}\right)\\
		&\leq & n\left ( 1-\frac{1}{\log n}\right)\nonumber\\[.2cm]
		&=& n +O\left (\frac{n}{\log n}\right) \nonumber.
	\end{eqnarray}
The frequency of occurrences is obvious since $n=1+\prod_{p \leq \log x}p$ has no restrictions. In the other direction, there is the lower bound
	\begin{eqnarray}\label{eq997.13}
		\varphi(n)&=&n \prod_{p \mid n}\left ( 1-\frac{1}{p}\right)\\
		&\geq &n \prod_{\log n< p \leq 2\log n}\left ( 1-\frac{1}{p}\right)\nonumber\\[.2cm]
		&=&n+O\left ( \frac{n}{\log \log n} \right )\nonumber.
	\end{eqnarray}
	Both relations \eqref{eq997.11} and \eqref{eq997.13} confirm the claim.
	(ii) The prime divisors of $n+1$ are $q=2$ and some prime $q> \log n$, so the claim follows from
	\begin{equation} \label{eq997.15}
		\varphi(n+1)=(n+1) \prod_{p|(n+1)}\left ( 1-\frac{1}{p}\right)\leq \frac{n}{2}\left (1-\frac{1}{\log n} \right )=\frac{n}{2}+O\left ( \frac{n}{\log n} \right ).
	\end{equation} 
\end{proof}

\begin{thm} \label{thm9192.21}  Let \(p\geq 2\) be a large prime. Then, the followings extreme values hold.
	\begin{enumerate} [font=\normalfont, label=(\roman*)]
		\item$ \displaystyle 
		\frac{\varphi(n)}{n}\leq 1-\frac{1}{n}$,  \tabto{6cm} if $n\geq 2$ is an integer.
		\item$ \displaystyle 
		\frac{\varphi(n)}{n}\geq \frac{e^{-\gamma}}{4\log \log n}$,  \tabto{6cm} if $n\geq 2$ is a highly composite integer.
		\item$ \displaystyle 
		\frac{\varphi(n)}{n}\approx \frac{e^{-\gamma}}{\log \log \log n}$,    \tabto{6cm} if $n\geq 2$ is an average integer.
	\end{enumerate}
\end{thm}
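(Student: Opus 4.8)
I would treat the three parts separately, since they differ greatly in depth. \textbf{Part (i)} is immediate from \hyperlink{dfn5005.100d}{Definition} \ref{dfn5005.100d}: when $n\geq 2$ the residue $n$ itself satisfies $\gcd(n,n)=n\neq 1$, so $\varphi(n)$ counts at most the $n-1$ residues $1,2,\dots,n-1$. Hence $\varphi(n)\leq n-1$, and dividing by $n$ gives $\varphi(n)/n\leq 1-1/n$; one line suffices.

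\textbf{Part (ii)} I would derive from the Rosser--Schoenfeld estimate \eqref{eq9977TFI.175}, namely $n/\varphi(n)<e^{\gamma}\log\log n+\tfrac{5}{2\log\log n}$, which holds for every integer $n$ with the single exception $n=2\cdot 3\cdots 23$. Taking reciprocals,
\[
\frac{\varphi(n)}{n}>\left(e^{\gamma}\log\log n+\frac{5}{2\log\log n}\right)^{-1},
\]
so it suffices to check that the denominator is at most $4e^{\gamma}\log\log n$; this reduces to $(\log\log n)^{2}\geq \tfrac{5}{6e^{\gamma}}$, which holds once $n$ exceeds an absolute constant. A highly composite $n$ tending to infinity is eventually larger than that constant and distinct from the exceptional modulus, so the bound follows. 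This step is routine; the hypothesis ``highly composite'' serves only to force $n$ large, and it is precisely the regime in which the lower bound is essentially sharp.

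\textbf{Part (iii)} is the substantive part and, I expect, the main obstacle, because ``average integer'' and the symbol ``$\approx$'' must be assigned a working meaning first: the mean value of $\varphi(n)/n$ over $n\leq x$ equals the constant $6/\pi^{2}$, so the intended reading must be the typical (normal) order. The approach I would take starts from the multiplicative identity $\varphi(n)/n=\prod_{p\mid n}(1-1/p)$ of \hyperlink{lem5005.100e}{Lemma} \ref{lem5005.100e}(ii) and feeds in two classical inputs: Mertens' third theorem $\prod_{p\leq y}(1-1/p)=\tfrac{e^{-\gamma}}{\log y}\,(1+o(1))$, and the Hardy--Ramanujan normal order $\omega(n)=(1+o(1))\log\log n$, valid for a density-one set of $n\leq x$. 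Modelling a typical such $n$ by an integer whose $\omega(n)$ prime divisors are the first $\omega(n)$ primes $p_{1}<\cdots<p_{\omega(n)}$, one gets $p_{\omega(n)}\asymp (\log\log n)(\log\log\log n)$, hence $\log p_{\omega(n)}=(1+o(1))\log\log\log n$, and Mertens then yields $\varphi(n)/n\asymp e^{-\gamma}/\log\log\log n$ for this model. I would present the proof of (iii) in exactly this form, stating explicitly that ``$\approx$'' denotes the order of magnitude attained along this density-one family, and noting that for a completely arbitrary $n$ one obtains only the one-sided estimate $\varphi(n)/n\gg e^{-\gamma}/\log\log\log n$ among integers with at most $\log\log n$ distinct prime factors. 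Turning this informal assertion into a theorem with a quantified error term is the delicate point.
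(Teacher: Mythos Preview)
The paper states Theorem~\ref{thm9192.21} without proof, so there is nothing to compare against directly for parts (i) and (ii); your one-line argument for (i) and your derivation of (ii) from the Rosser--Schoenfeld bound \eqref{eq9977TFI.175} quoted just above the theorem are both correct and are exactly what the paper's context suggests.

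For part (iii), the paper's substance is in the immediately following Theorem~\ref{thm1.4}, which asserts $n/\varphi(n)\asymp\log\log\log n$ for almost all $n$ and is proved by precisely the mechanism you outline: invoke the Hardy--Ramanujan/Erd\H{o}s--Kac normal order $\omega(n)\ll\log\log n$, then control $\prod_{p\mid n}(1-1/p)^{-1}$ by a Mertens-type product. The only cosmetic difference is that the paper majorises the prime divisors of $n$ by the primes in $[1,(\log\log n)^{2}]$ (using $\pi((\log\log n)^{2})\gg\log\log n$), whereas you model them by the first $\omega(n)$ primes and estimate $p_{\omega(n)}$; both routes feed the same cutoff $y$ with $\log y\asymp\log\log\log n$ into Mertens and yield the same conclusion. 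Your caveat about the meaning of ``$\approx$'' is well placed---the paper resolves it only implicitly, by restating the claim as the two-sided $\ll$ bound of Theorem~\ref{thm1.4}.
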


The totient function have a wide range of values, as confirmed by \hyperlink{lem9977TFI.300}{Lemma} \ref{lem9977TFI.300}, and this accounts for the wide range and large gaps in the sequence of totient gaps  
\begin{equation} \label{eq997.17}
	\varphi(2)-\varphi(1), \; \;	\varphi(3)-\varphi(2),	\; \;\varphi(4)-\varphi(3),\; \; \ldots, \; \;	\varphi(n+1)-\varphi(n), \; \; \ldots .
\end{equation}
The gap can be as small as $\varphi(n+1)-\varphi(n)=0$, and it can be as large as $\varphi(n+1)-\varphi(n)=n/2+O\left (n/\log n \right)$. For example, $\varphi(4)-\varphi(3)=0$, and $\varphi(2\cdot3\cdot5+1)-\varphi(2\cdot3\cdot5+2)=14$.

\subsection{Lower And Upper Bounds for Almost All Integers} \label{s7}
The totient function has the explicit lower bound
\begin{equation}\label{eq997.30}
	\frac{\varphi(n)}{n}> \frac{1}{e^{\gamma} \log \log n+5/(2 \log \log n)}
\end{equation} 
for any integer $n \geq 1$, see {\color{red}\cite[Theorem 7]{RS1962}}. The subset of integers that satisfy the extreme values of this inequality is a very thin subset of integers. In contrast, almost every integers has a large lower and upper bounds. The lower and upper bounds for almost every integer are significantly smaller by an iterated factor of log as demonstrated below.
\begin{thm} \label{thm1.4} For almost all integers $n \geq 1$, the ratio $n/\varphi(n)$ has the followings bounds.
	\begin{enumerate} [font=\normalfont, label=(\roman*)]
		\item $ \displaystyle 
		\log \log \log n \ll \frac{n}{\varphi(n)} .$
		\item 
		$ \displaystyle \frac{n}{\varphi(n)} \ll \log \log \log n.$
	\end{enumerate}
\end{thm}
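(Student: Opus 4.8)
The plan is to locate the normal order of $n/\varphi(n)$ and then read off both inequalities from it; this is, in effect, the ``average integer'' case already recorded in \ref{thm9192.21}(iii). Passing to logarithms via the multiplicative identity of Lemma \ref{lem5005.100e}(ii), $\log(n/\varphi(n))=\sum_{p\mid n}\log\frac{p}{p-1}$, so the quantity is an additive function of $n$ governed entirely by the prime divisors of $n$; since $\log\frac{p}{p-1}\ll 1/p$, the large prime divisors contribute only $O(1)$ in total, and the size of $n/\varphi(n)$ is dictated by how many small primes divide $n$.

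First I would invoke the Hardy--Ramanujan normal order $\omega(n)=(1+o(1))\log\log n$, valid for all but $o(x)$ integers $n\le x$ and provable by the second-moment (Tur\'an--Kubilius) method, in the same spirit as the M\"obius-sum estimates of Section \ref{S2222MN}; and, more sharply, the companion fact that $\#\{p\le y:\ p\mid n\}=(1+o(1))\log\log y$ for almost all $n$ in a suitable range of $y$. Choosing a cut-off $y=y(n)$ with $\log\log y(n)\asymp\omega(n)\asymp\log\log n$ gives $\log y(n)\asymp\log\log n\cdot\log\log\log n$, hence $\log\log y(n)=\log\log\log n+O(\log\log\log\log n)\sim\log\log\log n$. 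Then Mertens' third theorem, $\prod_{p\le y}(1-1/p)^{-1}=e^{\gamma}\log y\,(1+o(1))$, applied at $y=y(n)$, would yield $n/\varphi(n)=e^{\gamma}\log y(n)\,(1+o(1))\asymp\log\log\log n$ for almost all $n$, which contains both (i) and (ii). (For (ii) there is also the cruder route: the additive function $\log(n/\varphi(n))$ has mean $\sum_{p\le x}\tfrac1p\log\tfrac{p}{p-1}+o(1)=O(1)$ over $n\le x$, so Markov's inequality gives $\log(n/\varphi(n))\le\log\log\log\log n$, hence $n/\varphi(n)\le\log\log\log n$, off a set of density $o(1)$; compare the unconditional bound $n/\varphi(n)\ll\log\log n$ of \eqref{eq9977TFI.175}.)

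The main obstacle I anticipate lies entirely in the second step: turning the count $\omega(n)\sim\log\log n$ into genuine two-sided control of $\sum_{p\mid n}1/p$, and in particular producing the lower bound needed for (i). One must show that for almost all $n$ the small prime divisors are numerous enough that $\sum_{p\mid n}1/p\gg\log\log\log n$, uniformly enough to survive the passage through Mertens' formula; this requires a delicate variance estimate for the truncated additive function $n\mapsto\sum_{p\mid n,\ p\le y}1/p$ together with an optimal, $n$-dependent choice of the cut-off $y(n)$, and it is precisely there that the implied constants and the shapes of the error terms have to be pinned down.
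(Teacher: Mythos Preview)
Your route to (ii) is essentially the paper's: use the normal order $\omega(n)\ll\log\log n$, replace the prime divisors of $n$ by the smallest $\omega(n)$ primes (which lie below roughly $(\log\log n)^2$), and apply Mertens to get $\prod_{p\mid n}(1-1/p)^{-1}\ll\log\log\log n$. That part is fine.

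The genuine gap is (i), and you have in fact already written down the reason it cannot be closed. You observed that the additive function $f(n)=\log\bigl(n/\varphi(n)\bigr)=\sum_{p\mid n}\log\frac{p}{p-1}$ has mean
\[
\frac{1}{x}\sum_{n\le x}f(n)=\sum_{p\le x}\frac{1}{p}\log\frac{p}{p-1}+o(1)\ \ll\ \sum_{p}\frac{1}{p^{2}}\ <\ \infty,
\]
and a parallel computation gives bounded variance. By the Erd\H{o}s--Wintner theorem (or Schoenberg's 1928 result for this very function), $n/\varphi(n)$ therefore possesses a continuous limiting distribution on $[1,\infty)$. Consequently, for \emph{any} function $g(n)\to\infty$ the set $\{n:\ n/\varphi(n)\ge g(n)\}$ has density $0$; in particular $\log\log\log n\ll n/\varphi(n)$ fails for almost all $n$, not holds. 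Your proposed rescue---a second-moment estimate for $\sum_{p\mid n,\,p\le y}1/p$ with a tuned cutoff $y(n)$---cannot work, because the full sum $\sum_{p\mid n}1/p$ already has bounded mean and variance uniformly in any truncation. Knowing that $n$ has about $\log\log y$ prime factors below $y$ says nothing about their reciprocals being large: those factors are typically spread across dyadic ranges, contributing $O(1)$ in total.

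The paper's proof asserts that ``the reverse inequality is similar''; it is not, and for the same reason your obstacle is insurmountable: part (i) as stated is false.
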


\begin{proof}[\textbf{Proof}] (ii) By the Erdos-Kac theorem, (confer \cite{EK1940}, {\color{red}\cite[Theorem 431]{HW1979}}, et cetera), almost every integer $n \geq 1$ has $\omega(n) \ll \log \log n$ prime divisors. In addition, the interval $[1, (\log \log n)^2]$ contains $\pi((\log \log n)^2) \gg \log \log n$ primes. Thus, 
	\begin{eqnarray}
		\frac{n}{\varphi(n)} &=& \prod_{p \mid n }\left( 1- \frac{1}{p} \right) ^{-1}  \\
		&\ll&  \prod_{p \ll (\log \log n)^2}\left( 1- \frac{1}{p} \right) ^{-1} \nonumber\\[.3cm]
		&\ll& \log \log \log n \nonumber.
	\end{eqnarray}
	The reverse inequality is similar.
\end{proof}

\begin{thm} \label{thm1.5}  Let $ x \geq 1$ be a large number. Then, there are constants $c_0>0$ and $c_1>0$ for which the Euler totient function has the followings bounds.
	\begin{enumerate} [font=\normalfont, label=(\roman*)]
		\item  $\displaystyle \frac{\varphi(n)} {n} \geq \frac{c_0}{\log \log \log n}$ \tabto{6cm}
		for almost all large integers $n \geq 1$.
		\item $\displaystyle \frac{\varphi(n)} {n} \leq \frac{c_1}{\log \log \log n}$ \tabto{6cm}
		for almost all large integers $n \geq 1$.
	\end{enumerate}
\end{thm}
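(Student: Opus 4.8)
The plan is to deduce Theorem~\ref{thm1.5} directly from Theorem~\ref{thm1.4} by passing to reciprocals. Recall that Theorem~\ref{thm1.4} furnishes absolute constants $c_0',c_1'>0$ and a subset $\mathcal{E}\subset\N$ of density zero (coming, via the Erdos--Kac theorem, from the density-one set of $n$ with $\omega(n)\ll\log\log n$) such that for every $n\notin\mathcal{E}$ which is large enough that $\log\log\log n>0$, i.e. $n>e^{e}$, one has simultaneously
\begin{equation}
c_0'\,\log\log\log n\;\leq\;\frac{n}{\varphi(n)}\qquad\text{and}\qquad \frac{n}{\varphi(n)}\;\leq\;c_1'\,\log\log\log n .
\end{equation}
Here ``almost all'' is understood in the sense that the exceptional set has at most $o(x)$ elements up to any large $x\geq1$, matching the hypothesis in the statement.

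First I would fix such an $n\notin\mathcal{E}$ with $n>e^{e}$, so that $\varphi(n)/n>0$ and $\log\log\log n>0$. Taking reciprocals in the right-hand inequality of Theorem~\ref{thm1.4} gives
\begin{equation}
\frac{\varphi(n)}{n}\;\geq\;\frac{1}{c_1'\,\log\log\log n}\;=\;\frac{c_0}{\log\log\log n},\qquad c_0:=\frac{1}{c_1'},
\end{equation}
which is part~(i). Taking reciprocals in the left-hand inequality gives
\begin{equation}
\frac{\varphi(n)}{n}\;\leq\;\frac{1}{c_0'\,\log\log\log n}\;=\;\frac{c_1}{\log\log\log n},\qquad c_1:=\frac{1}{c_0'},
\end{equation}
which is part~(ii). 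Adjoining to $\mathcal{E}$ the finitely many integers $n\leq e^{e}$ leaves a set of density zero, so both displayed bounds hold for almost all integers $n\geq1$, as claimed.

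The only point needing care is the bookkeeping of exceptional sets: one must be sure that a single density-one set of $n$ validates (i) and (ii) at once. This is immediate, because the two inequalities of Theorem~\ref{thm1.4} are established on the very same set of $n$ (those with $\omega(n)\ll\log\log n$, controlled by Erdos--Kac), and the reciprocal map is monotone and preserves that set. Consequently there is no genuine obstacle here; the mathematical content of Theorem~\ref{thm1.5} is entirely contained in Theorem~\ref{thm1.4}, and the present statement is merely its multiplicative inversion, recorded separately because it is the form of the bound used later in the analysis of $\varphi(q^{n}-1)/q^{n}$.
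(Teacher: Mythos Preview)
Your proposal is correct and aligns with the paper's own treatment: the paper simply states that ``the proof of this theorem uses the same technique as Theorem~\ref{thm1.4},'' and your reduction by taking reciprocals of the two-sided bound in Theorem~\ref{thm1.4} is the natural way to make that remark explicit. Your care in noting that both inequalities of Theorem~\ref{thm1.4} hold on a single density-one set (so that the reciprocal bounds also hold simultaneously for almost all $n$) is appropriate and matches the Erd\H{o}s--Kac input used there.
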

The proof of this theorem uses the same technique as Theorem \ref{thm1.4}.

\subsection{Lower Bound for All Integers}
This section provides the detailed proofs of the lower bounds of the totient function $\varphi(n)/n=\prod_{r\mid n}\left( 1-1/r\right)$, where $r\geq2$ is a prime divisor of $n$, see {\color{red}\cite[Theorem 2.4]{AT1976}}. These estimates are similar to the explicit lower bound
\begin{equation}\label{eq9955P.400Ta}
	\frac{\varphi(n)}{n}> \frac{1}{e^{\gamma} \log \log n+5/(2 \log \log n)}
\end{equation} 
for any integer $n \geq 1$, see {\color{red}\cite[Theorem 7]{RS1962}}, but one of the proofs is much simpler.

\begin{lem}  \label{lem9955P.400T}\hypertarget{lem9955P.400T} If $p$ is a large prime, then
	$$\frac{\varphi(p-1)}{p}\gg\frac{1}{\log \log p}.$$
\end{lem}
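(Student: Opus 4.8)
The plan is to bound $\varphi(p-1)/(p-1)$ from below and then absorb the harmless change from $p-1$ to $p$ in the denominator. Write $\varphi(p-1)/(p-1)=\prod_{r\mid p-1}(1-1/r)$, the product running over the prime divisors $r$ of $m:=p-1$. The standard estimate $\prod_{r\mid m}(1-1/r)^{-1}\ll \log\log m$ — which is exactly the content of \eqref{eq9955P.400Ta} (Rosser--Schoenfeld, {\color{red}\cite[Theorem 7]{RS1962}}) — gives
\begin{equation}\label{eq9955P.400Tp1}
\frac{\varphi(p-1)}{p-1}=\prod_{r\mid p-1}\left(1-\frac1r\right)\gg\frac{1}{\log\log(p-1)}.
\end{equation}
Since $\log\log(p-1)\leq\log\log p$ for $p$ large and $(p-1)/p\to1$, we get $\varphi(p-1)/p=\frac{p-1}{p}\cdot\frac{\varphi(p-1)}{p-1}\gg 1/\log\log p$, which is the claim.

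First I would recall, or quote directly from {\color{red}\cite[Theorem 7]{RS1962}} as displayed in \eqref{eq9955P.400Ta}, that for every integer $n\geq 3$ one has $n/\varphi(n)<e^{\gamma}\log\log n+5/(2\log\log n)$; inverting and applying this with $n=p-1$ yields \eqref{eq9955P.400Tp1} with an explicit implied constant (any constant strictly below $e^{-\gamma}$ works once $p$ is large enough that the secondary term $5/(2\log\log(p-1))$ is negligible against $e^{\gamma}\log\log(p-1)$). Second, I would handle the cosmetic replacement of $p-1$ by $p$: both $\log\log(p-1)\le\log\log p$ and $(p-1)/p\ge 1/2$ are trivial for $p\ge 3$, so the bound transfers with at worst a factor of $2$ loss in the implied constant. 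That completes the argument.

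The only thing to watch is that $\log\log$ is only sensible and positive once $p-1\ge 3$, i.e.\ for $p\ge 5$ say; this is why the statement is phrased for $p$ large, and no separate small-prime case analysis is needed. Otherwise the proof is entirely routine — it is a one-line consequence of the Rosser--Schoenfeld bound already quoted in the text — so I do not anticipate a genuine obstacle; the ``hard part,'' such as it is, is merely being careful that the implied constant in $\gg$ is a true constant independent of $p$, which it is since the Rosser--Schoenfeld inequality is uniform in $n$.
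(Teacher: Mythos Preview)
Your proof is correct but proceeds differently from the paper. You invoke the Rosser--Schoenfeld inequality \eqref{eq9955P.400Ta} directly with $n=p-1$, invert it, and then absorb the harmless shift from $p-1$ to $p$; this is the shortest possible route, since the needed bound is already displayed in the text. The paper instead argues more from scratch: it writes $\varphi(p-1)/p=\frac{p-1}{p}\prod_{r\mid p-1}(1-1/r)$, uses the bound $\omega(p-1)\le 2\log p$ (via {\color{red}\cite[Theorem 2.10]{MV2007}}) to replace the product over primes dividing $p-1$ by the full product $\prod_{r\le 2\log p}(1-1/r)$, and then applies an explicit Mertens-type estimate from {\color{red}\cite[Theorem 6.12]{DP2016}} to that product. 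Your approach is cleaner and more economical, since it treats Rosser--Schoenfeld as the finished tool it is; the paper's approach trades brevity for a more hands-on derivation that makes visible how the $\log\log p$ arises from the interaction of the prime-omega bound with Mertens' product.
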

\begin{proof}[\textbf{Proof}] For any prime $p$, the ratio $\varphi(p-1)/p$ can be rewritten as a product over the prime
	\begin{equation}\label{eq9955P.400Tc}
		\frac{\varphi(p-1)}{p}=		\frac{p-1}{p}\cdot \frac{\varphi(p-1)}{p-1}=\frac{p-1}{p}\prod_{r\mid p-1}\left( 1-\frac{1}{r}\right).
	\end{equation}	
	where $r\geq2$ ranges over the prime divisor of $p-1$. This step follows from the identity $\varphi(n)/n=\prod_{r\mid n}\left( 1-1/r\right)$, where $r\geq2$ ranges 
	over the prime divisors of $n$. Since the number $p-1$ has fewer than $ 2\log p$ prime divisors, see {\color{red}\cite[Theorem 2.10]{MV2007}}, let $ x=2\log p$. Then, an application of the lower bound of the product given in {\color{red}\cite[Theorem 6.12]{DP2016}} yields 
	\begin{eqnarray}\label{eq9955P.400Tf}
		\frac{\varphi(p-1)}{p}&\geq&\frac{p-1}{p}\prod_{r\leq 2\log p}\left( 1-\frac{1}{r}\right)\\[.3cm]
		&>&\frac{p-1}{p}\cdot \frac{e^{-\gamma}}{\log( 2\log p)}\left(1-\frac{0.2}{(\log (2\log p))^2} \right)\nonumber\\[.3cm]
		& \gg&\frac{1}{\log \log p}>0 \nonumber,
	\end{eqnarray}	
	where $\gamma>0$ is Euler constant.	
\end{proof}

The extreme values of the prime divisors counting function $\omega(p-1)$ occur on the set of primorial-type primes $\mathcal{P}=\{p=2^{v_2}\cdot3^{v_3}\cdots p_k^{v_k} +1\}$, where $v_p\geq1$ is the $p$-adic valuation. A \textit{primorial} prime has the form $p=2\cdot3\cdots p_k +1$, where $p_k$ denotes the $k$th prime in increasing order. The primorial primes are very rare, see \cite{CG2002} for some data. In fact, it is unknown whether or not the set $\mathcal{P}$ is finite or infinite. \\

An alternative result for the lower bound of the ratio $\varphi(n)/n$ appears in {\color{red}\cite[Theorem 2.9]{MV2007}}. Another technique is employed below to derive an explicit lower bound for all integers $n\geq10$. 

\begin{lem}  \label{lem9955P.400TL}\hypertarget{lem9955P.400TL} If $n\geq5$ is a large integer, then
	$$\frac{\varphi(n)}{n}\geq\frac{3}{e^{\gamma}\pi^2}\frac{1}{\log \log n},$$
	where $\gamma>0$ is Euler constant.
\end{lem}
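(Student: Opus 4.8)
The plan is to extract the factor $\pi^{2}$ from an Euler-product identity and then reduce the estimate to a Mertens-type bound. Starting from $\varphi(n)/n=\prod_{r\mid n}(1-1/r)$, where $r$ ranges over the prime divisors of $n$, I would use $1-1/r=(1-1/r^{2})/(1+1/r)$ to write
\[
\frac{\varphi(n)}{n}=\Bigl(\prod_{r\mid n}\bigl(1-r^{-2}\bigr)\Bigr)\Bigl(\prod_{r\mid n}\bigl(1+r^{-1}\bigr)^{-1}\Bigr)\ \geq\ \frac{6}{\pi^{2}}\,\prod_{r\mid n}\bigl(1+r^{-1}\bigr)^{-1},
\]
since $\prod_{r\mid n}(1-r^{-2})\geq\prod_{r}(1-r^{-2})=\zeta(2)^{-1}=6/\pi^{2}$, the product being taken over all primes. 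Hence it suffices to prove the upper bound $\prod_{r\mid n}(1+1/r)\leq\frac{6e^{\gamma}}{\pi^{2}}(1+o(1))\log\log n$.

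For that bound I would first use monotonicity: since $t\mapsto 1+1/t$ is decreasing and the $i$-th smallest prime divisor of $n$ is $\geq p_{i}$ (the $i$-th prime), we get $\prod_{r\mid n}(1+1/r)\leq\prod_{r\leq p_{k}}(1+1/r)$ with $k=\omega(n)$. Next I would bound $p_{k}$: since $n\geq\rad(n)=\prod_{r\mid n}r\geq p_{1}\cdots p_{k}=e^{\theta(p_{k})}$ with $\theta$ the Chebyshev function, one has $\theta(p_{k})\leq\log n$, and Chebyshev's lower bound $\theta(x)\gg x$ (say $\theta(x)\geq x/2$ for large $x$, cf.\ \cite{MV2007}) yields $p_{k}\leq 2\log n$, so $\log p_{k}\leq(1+o(1))\log\log n$; if instead $p_{k}$ stays bounded the target inequality is trivial, its right-hand side tending to $0$. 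Finally I would invoke Mertens in the form $\prod_{r\leq x}(1+1/r)=\prod_{r\leq x}(1-r^{-2})/\prod_{r\leq x}(1-r^{-1})=\frac{6e^{\gamma}}{\pi^{2}}(1+o(1))\log x$, where the denominator is the explicit estimate of \cite{DP2016} already used in the proof of Lemma \ref{lem9955P.400T}; combined with $\log p_{k}\leq(1+o(1))\log\log n$ this gives the required bound on $\prod_{r\mid n}(1+1/r)$.

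Putting the two halves together, the constants multiply as $\frac{6}{\pi^{2}}\cdot\frac{\pi^{2}}{6e^{\gamma}}=e^{-\gamma}$, so $\varphi(n)/n\geq e^{-\gamma}(1-o(1))/\log\log n$, and since $3/\pi^{2}\approx 0.304<1$ absorbs the factor $1-o(1)$ for all sufficiently large $n$, the stated inequality $\varphi(n)/n\geq \frac{3}{e^{\gamma}\pi^{2}}\cdot\frac{1}{\log\log n}$ follows. The main technical point — more a bookkeeping matter than a deep obstacle — is the primorial/Chebyshev step: one must ensure that after the monotonicity reduction the prime $p_{\omega(n)}$ is genuinely $O(\log n)$, and track the error terms carefully so that the generous slack in $3/\pi^{2}$ (together with the extra room $6/\pi^{2}<2$) swallows them. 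If the statement is wanted for every $n\geq 5$ rather than only for large $n$, one would replace the asymptotic Mertens and Chebyshev inputs by their explicit Rosser--Schoenfeld forms and verify the finitely many small $n$ directly.
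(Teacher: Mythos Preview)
Your argument is correct, but it is not the route the paper takes. You split each local factor via $1-1/r=(1-1/r^{2})/(1+1/r)$, extract the global factor $6/\pi^{2}$ from the Euler product for $\zeta(2)^{-1}$, and then bound $\prod_{r\mid n}(1+1/r)$ by reducing to the first $\omega(n)$ primes and invoking Chebyshev plus Mertens; this is essentially the technique already used in Lemma~\ref{lem9955P.400T}, and in fact yields the sharper asymptotic constant $e^{-\gamma}$, of which the stated $3/(e^{\gamma}\pi^{2})$ is a generous weakening. The paper instead exploits the duality between $\varphi$ and $\sigma$: from the identity
\[
\frac{\varphi(n)}{n}\cdot\frac{\sigma(n)}{n}=\prod_{p^{v}\,\|\,n}\Bigl(1-\frac{1}{p^{v+1}}\Bigr)\ \geq\ \prod_{p}\Bigl(1-\frac{1}{p^{2}}\Bigr)=\frac{6}{\pi^{2}},
\]
one simply divides by $\sigma(n)/n$ and plugs in Gronwall's classical upper bound $\sigma(n)\leq 2e^{\gamma}n\log\log n$ (cf.\ \cite[Theorem~323]{HW1979}); the constant $3/(e^{\gamma}\pi^{2})$ then drops out exactly, with no asymptotic bookkeeping. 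So the paper trades the Chebyshev/Mertens tracking you do for a single black-box appeal to the $\sigma$-bound, while your approach, though longer, is more self-contained and recovers the optimal leading constant before throwing it away.
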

\begin{proof}[\textbf{Proof}] Start with the sigma-phi identity
	\begin{equation}\label{eq9955P.400Tm}
		\frac{\varphi(n)}{n}	\frac{\sigma(n)}{n}=\prod_{p^{v}\mid \mid n	}\left( 1-\frac{1}{p^{v+1}}\right) ,
	\end{equation}	
	where $p^{v}\mid \mid n$ is the maximal prime power divisor of $n$.  Since the sum of divisors function has the unconditional upper bound
	\begin{equation}\label{eq9955P.400To}
		\sigma(n)=\sum_{d\mid n}d\leq 2e^{\gamma}n\log\log n
	\end{equation} 
	for all integers $n\geq 5$, it follows that the phi function has the lower bound
	\begin{eqnarray}\label{eq9955P.400Tp}
		\frac{\varphi(n)}{n}&=&	\frac{n}{\sigma(n)}\prod_{p^{v}\mid \mid n	}\left( 1-\frac{1}{p^{v+1}}\right) \\
		&\geq&	\frac{n}{2e^{\gamma}n\log\log n}\prod_{p\geq 2	}\left( 1-\frac{1}{p^{2}}\right)\nonumber \\[.2cm]
		&\geq&	\frac{1}{2e^{\gamma}\log\log n}\cdot \frac{6}{\pi^2}\nonumber 
	\end{eqnarray}		
	as claimed.	
\end{proof}
The constant is approximately
\begin{equation}\label{eq9955P.400Tq}
	\frac{3}{e^{\gamma}\pi^2}=	0.17066321832663749538636772818453491728130\ldots.
\end{equation}
The upper bound of the sum of divisors function $\sigma(n)$, displayed in \eqref{eq9955P.400To}, is a classical result, see {\color{red}\cite[Theorem 323]{HW1979}}. The maximization at the subset colossally abundant integers is proved in \cite{AE1944}, \cite{RS1997} et cetera. Similarly, the phi function $\varphi(n)$ is minimized at the subset of colossally abundant integers.

\section{Totient Functions over Dedekind Domains}\label{S7171TFDD-A}\hypertarget{S7171TFDD-A}
This section introduced one of the essential part in the generalization of the totient function over integral domains. 
Let $\mathcal{D}$ be a Dedekind domain. An element $\mathfrak{n}\in\mathcal{D}$ has a decomposition as a product of prime ideals $\mathfrak{n}=\prod_{\mathfrak{p}\mid \mathfrak{n}} \mathfrak{p}^{v}$, where $\mathfrak{p}\in \mathcal{D}$ ranges over the prime ideals, and $v\geq0$.\\

For a pair of ideals $\mathfrak{m}$ and $\mathfrak{n}$ in a Dedekind domain $\mathcal{D}$ the greatest common divisor is denoted by $\gcd(\mathfrak{m},\mathfrak{n})$. Similarly, the lowest common multiple is denoted by $\lcm(\mathfrak{m},\mathfrak{n})$, see {\color{red}\cite[p.\ 8]{NW2004}} or similar reference. The norm is a real valued function $N: \mathcal{D} \longrightarrow \R$ denoted by $N(\mathcal{I})$.\\

\begin{dfn}\label{dfn7171TFDD.200c}\hypertarget{dfn7171TFDD.200c} {\normalfont For any element $\mathfrak{n}\in \mathcal{D}$, the Mobius function is defined by
	\begin{equation} \label{eq7171TFDD.200c}
		\mu_\mathcal{D}(\mathfrak{n})=
		\begin{cases}
			0 & \text{ if some prime ideal }\mathfrak{p}^2\mid \mathfrak{n}, \\
			1 & \text{ if no prime ideal }\mathfrak{p}^2\mid \mathfrak{n}, \\
		\end{cases}
	\end{equation}
}
\end{dfn}

The totient function over many domains have similar properties. 
\begin{dfn}\label{D-totient.100} {\normalfont Let $\mathcal{R}$ be a Dedekind domain, and let $\mathcal{I}$ be an ideal. The Euler totient function over the Dedekind domain $\mathcal{R}$ is defined by $$\Phi (\mathcal{I})=\sum_{\substack{\mathfrak{a}\in \mathcal{R}/\mathcal{I} \\\gcd(\mathfrak{a},\mathcal{I})=1}}1.$$ It counts the number of invertible elements $\mathfrak{a}\ne0$ in the factor ring $\mathcal{R}/\mathcal{I}$.
}
\end{dfn}

The additive identity, multiplicative identity, and other identities have analogous versions in this domain.

\begin{lem}  \label{lem7171TFDD.200f}\hypertarget{lem7171TFDD.200f} The totient function has the following representations.
	\begin{enumerate} [font=\normalfont, label=(\roman*)]
		\item  
		$\displaystyle 	
		\Phi (\mathcal{I})=N(\mathcal{I})\sum _{\mathfrak{d} \mid  \mathcal{I}} \frac{\mu (\mathfrak{d})}{N(\mathfrak{d})},$\tabto{9cm} additive identity.
		\item  
		$\displaystyle 	
		\Phi (\mathcal{I})=N(\mathcal{I})\prod_{\mathfrak{p} \mid  \mathcal{I}}\left (1-\frac{1}{N(\mathfrak{p})}\right ),$\tabto{9cm} multiplicative identity.
		\item  
		$\displaystyle 	\Phi (\mathfrak{n})=\sum _{1\leq N(\mathfrak{m})<N(\mathfrak{n}) ,} \sum _{\substack{\mathfrak{d} \mid \mathfrak{m} \\ \mathfrak{d} \mid \mathfrak{n}}} \mu (\mathfrak{d})       ,$\tabto{9cm} counting identity.
		\item  
		$\displaystyle 	\Phi (\mathfrak{n})= \sum _{ \mathfrak{d} \mid \mathfrak{n}} \Phi  (\mathfrak{d})       ,$\tabto{9cm} self-linearity relation identity.
	\end{enumerate}
\end{lem}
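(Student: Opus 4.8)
The plan is to transcribe, almost verbatim, the proof of the integer identities in Lemma~\ref{lem5005.100e}, replacing the divisor lattice of $\N$ by the lattice of ideals of $\mathcal{R}$ and the absolute value by the ideal norm $N$. The one arithmetic input is the characteristic-function property of the Dedekind--M\"obius function of Definition~\ref{dfn7171TFDD.200c}: for any ideal $\mathfrak{n}$,
$$\sum_{\mathfrak{d}\mid\mathfrak{n}}\mu_{\mathcal{D}}(\mathfrak{d})=\begin{cases}1&\text{if }\mathfrak{n}=(1),\\ 0&\text{if }\mathfrak{n}\ne(1),\end{cases}$$
equivalently, $\sum_{\mathfrak{d}\mid\gcd(\mathfrak{a},\mathcal{I})}\mu_{\mathcal{D}}(\mathfrak{d})$ is the indicator of $\gcd(\mathfrak{a},\mathcal{I})=(1)$. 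By unique factorization of ideals and multiplicativity of $\mu_{\mathcal{D}}$, this reduces to the prime-power case, where it is immediate from the definition.

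First I would prove the counting identity (iii). Working with a fixed complete residue system modulo $\mathfrak{n}$ --- this is the index set written $1\le N(\mathfrak{m})<N(\mathfrak{n})$ --- and inserting the indicator above into Definition~\ref{D-totient.100} gives $\Phi(\mathfrak{n})=\sum_{\mathfrak{m}}\sum_{\mathfrak{d}\mid\mathfrak{m},\,\mathfrak{d}\mid\mathfrak{n}}\mu_{\mathcal{D}}(\mathfrak{d})$, which is exactly (iii). Swapping the order of summation, and using that the number of residues modulo $\mathfrak{n}$ divisible by a fixed $\mathfrak{d}\mid\mathfrak{n}$ is $N(\mathfrak{n})/N(\mathfrak{d})$ (strict multiplicativity of the norm), yields
$$\Phi(\mathfrak{n})=\sum_{\mathfrak{d}\mid\mathfrak{n}}\mu_{\mathcal{D}}(\mathfrak{d})\,\frac{N(\mathfrak{n})}{N(\mathfrak{d})}=N(\mathfrak{n})\sum_{\mathfrak{d}\mid\mathfrak{n}}\frac{\mu_{\mathcal{D}}(\mathfrak{d})}{N(\mathfrak{d})},$$
which is the additive identity (i). For the multiplicative identity (ii), I would either expand $\prod_{\mathfrak{p}\mid\mathfrak{n}}\bigl(1-N(\mathfrak{p})^{-1}\bigr)$ over the finitely many prime divisors of $\mathfrak{n}$ and collect terms --- matching $\sum_{\mathfrak{d}\mid\mathfrak{n}}\mu_{\mathcal{D}}(\mathfrak{d})/N(\mathfrak{d})$ because $\mu_{\mathcal{D}}$ is supported on squarefree ideals --- or compute $\Phi(\mathfrak{p}^{v})=N(\mathfrak{p})^{v}-N(\mathfrak{p})^{v-1}$ directly and invoke multiplicativity of $\Phi$ over coprime ideals (Chinese Remainder Theorem). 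Finally, for the divisor-sum relation (iv), I would partition a complete residue system modulo $\mathfrak{n}$ by the value $\mathfrak{e}=\gcd(\mathfrak{a},\mathfrak{n})$: writing $\mathfrak{a}=\mathfrak{e}\mathfrak{b}$ shows the class of each $\mathfrak{e}\mid\mathfrak{n}$ has $\Phi(\mathfrak{n}/\mathfrak{e})$ members, so summing and reindexing $\mathfrak{d}=\mathfrak{n}/\mathfrak{e}$ gives $N(\mathfrak{n})=\sum_{\mathfrak{d}\mid\mathfrak{n}}\Phi(\mathfrak{d})$.

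The computations themselves are routine; the real obstacle is structural bookkeeping. Every fact imported from $\Z$ --- that $\mathcal{R}/\mathcal{I}$ is finite with $\#(\mathcal{R}/\mathcal{I})=N(\mathcal{I})$, unique factorization of ideals, the Chinese Remainder Theorem (hence multiplicativity of $\Phi$), and strict multiplicativity of the ideal norm --- must actually be available over $\mathcal{R}$. These hold when $\mathcal{R}$ is a Dedekind domain with finite residue fields (e.g.\ the ring of integers of a number field, or $\F_q[x]$), and I would state that as a standing hypothesis at the start of the proof. Granting it, identities (i)--(iv) follow by the same three maneuvers --- insert the M\"obius indicator, interchange summations, count residues in a divisor class --- used for Lemma~\ref{lem5005.100e}.
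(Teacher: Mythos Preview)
Your proposal is correct and follows the same approach as the paper: both rest on the M\"obius indicator $\sum_{\mathfrak{d}\mid\gcd(\mathfrak{a},\mathfrak{n})}\mu_{\mathcal{D}}(\mathfrak{d})$ for coprimality, with the paper's proof in fact only sketching part (iii) via this device while you carry the argument through all four parts. Note also that you correctly derive $N(\mathfrak{n})=\sum_{\mathfrak{d}\mid\mathfrak{n}}\Phi(\mathfrak{d})$ for (iv), which is the intended identity --- the left-hand side $\Phi(\mathfrak{n})$ in the stated lemma is a typo (compare the integer version \eqref{eq5005.100d}).
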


\begin{proof}[\textbf{Proof}] (iii) The standard approach is to employ the characteristic function of relatively prime ideals
	\begin{equation} \label{eq7171TFDD2103}
		\sum _{\mathfrak{d} \mid  \mathfrak{n}} \mu (\mathfrak{d}) =
		\begin{cases}
			1 & \gcd (\mathfrak{d},\mathfrak{n})=1, \\
			0 & \gcd (\mathfrak{d},\mathfrak{n})\neq 1, \\
		\end{cases}
	\end{equation}
	to derive this identity.
\end{proof}

\begin{lem}  \label{lem7171TFDD.200r}\hypertarget{lem7171TFDD.200r} The inverse totient function has the following representations.
	\begin{enumerate} [font=\normalfont, label=(\roman*)]
		\item  
		$\displaystyle 	
		\frac{1}{\Phi (n)}=\frac{1}{N(\mathfrak{n})}\sum _{\mathfrak{d} \mid  \mathfrak{n}} \frac{\mu^2 (\mathfrak{d})}{\Phi (\mathfrak{d})},$\tabto{8cm} additive inverse identity.
		\item  
		$\displaystyle 	
		\frac{1}{\Phi (\mathfrak{n})}=\frac{1}{N(\mathfrak{n})}\prod _{\mathfrak{p} \mid  \mathfrak{n}} \left (1-\frac{1}{N(\mathfrak{p})}\right )^{-1},$\tabto{8cm} multiplicative inverse identity.
	\end{enumerate}
\end{lem}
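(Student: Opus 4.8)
The plan is to deduce both representations from the multiplicative identity of \hyperlink{lem7171TFDD.200f}{Lemma} \ref{lem7171TFDD.200f}(ii), which reads $\Phi(\mathfrak{n}) = N(\mathfrak{n})\prod_{\mathfrak{p}\mid\mathfrak{n}}\bigl(1 - N(\mathfrak{p})^{-1}\bigr)$. Statement (ii) is then immediate: since $\Phi(\mathfrak{n})\neq 0$ and each local factor $1 - N(\mathfrak{p})^{-1}$ is nonzero, one inverts both sides to obtain
\[
\frac{1}{\Phi(\mathfrak{n})} = \frac{1}{N(\mathfrak{n})}\prod_{\mathfrak{p}\mid\mathfrak{n}}\left(1 - \frac{1}{N(\mathfrak{p})}\right)^{-1}.
\]

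For statement (i) I would first record two routine facts about $\Phi$ over the Dedekind domain. It is multiplicative, $\Phi(\mathfrak{a}\mathfrak{b}) = \Phi(\mathfrak{a})\Phi(\mathfrak{b})$ whenever $\gcd(\mathfrak{a},\mathfrak{b}) = 1$; this follows from the Chinese Remainder Theorem $\mathcal{D}/\mathfrak{a}\mathfrak{b}\cong\mathcal{D}/\mathfrak{a}\times\mathcal{D}/\mathfrak{b}$, or equivalently by reading off the product in \ref{lem7171TFDD.200f}(ii) together with the multiplicativity of the ideal norm. Also $\Phi(\mathfrak{p}) = N(\mathfrak{p}) - 1$ for every prime ideal $\mathfrak{p}$, since $\mathcal{D}/\mathfrak{p}$ is a field with $N(\mathfrak{p})$ elements (the $\mathfrak{n}=\mathfrak{p}$ case of the multiplicative identity). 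With these in hand, rewrite each Euler factor in the reciprocal identity above as
\[
\left(1 - \frac{1}{N(\mathfrak{p})}\right)^{-1} = 1 + \frac{1}{N(\mathfrak{p}) - 1} = 1 + \frac{1}{\Phi(\mathfrak{p})},
\]
and expand the finite product over the primes dividing $\mathfrak{n}$. Each summand of the expansion is indexed by a subset of those primes, that is, by a squarefree divisor $\mathfrak{d}\mid\mathfrak{n}$, and equals $\prod_{\mathfrak{p}\mid\mathfrak{d}}\Phi(\mathfrak{p})^{-1} = \Phi(\mathfrak{d})^{-1}$ by multiplicativity. Therefore
\[
\frac{N(\mathfrak{n})}{\Phi(\mathfrak{n})} = \sum_{\substack{\mathfrak{d}\mid\mathfrak{n}\\ \mathfrak{d}\text{ squarefree}}}\frac{1}{\Phi(\mathfrak{d})} = \sum_{\mathfrak{d}\mid\mathfrak{n}}\frac{\mu^2(\mathfrak{d})}{\Phi(\mathfrak{d})},
\]
since $\mu^2(\mathfrak{d})$ is exactly the indicator function of squarefree ideals, by \hyperlink{dfn7171TFDD.200c}{Definition} \ref{dfn7171TFDD.200c}. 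Dividing through by $N(\mathfrak{n})$ gives (i).

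I do not anticipate a genuine obstacle: this is the ideal-theoretic transcription of the classical identity $1/\varphi(n) = n^{-1}\sum_{d\mid n}\mu^2(d)/\varphi(d)$ from \hyperlink{lem5005.100i}{Lemma} \ref{lem5005.100i}(i), and the only points needing a sentence of justification --- multiplicativity of $\Phi$ and its value $N(\mathfrak{p})-1$ on primes --- both drop out of \hyperlink{lem7171TFDD.200f}{Lemma} \ref{lem7171TFDD.200f}(ii). A completely parallel alternative is to verify $N(\mathfrak{n})/\Phi(\mathfrak{n}) = \sum_{\mathfrak{d}\mid\mathfrak{n}}\mu^2(\mathfrak{d})/\Phi(\mathfrak{d})$ on prime powers $\mathfrak{n}=\mathfrak{p}^a$ and then invoke multiplicativity of both sides, which bypasses the product expansion altogether.
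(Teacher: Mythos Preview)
The paper states this lemma without proof; it is the Dedekind-domain analogue of \hyperlink{lem5005.100i}{Lemma}~\ref{lem5005.100i}, which is likewise left unproved. Your argument is correct and is exactly the standard derivation one would expect the paper to have in mind: invert the multiplicative identity of \hyperlink{lem7171TFDD.200f}{Lemma}~\ref{lem7171TFDD.200f}(ii) to get (ii), then rewrite each Euler factor as $1+1/\Phi(\mathfrak{p})$ and expand over squarefree divisors to get (i). There is nothing to compare against and no gap to flag.
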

\begin{lem} \label{lem1771TFDD.200t}\hypertarget{lem1771TFDD.200t} Given a pair of elements $\mathfrak{m},\mathfrak{n} \in \mathcal{D}$, the totient function satisfies the followings relations.
	\begin{enumerate} [font=\normalfont, label=(\roman*)]
		\item  
		$\displaystyle \Phi(\mathfrak{m}\mathfrak{n})=\Phi(\mathfrak{m})\Phi(\mathfrak{n})$, \tabto{10cm} \text {\normalfont if}  $\gcd(\mathfrak{m},\mathfrak{n})=1.$
		\item 
		$\displaystyle \Phi(\mathfrak{m}\mathfrak{n})=\frac{N(\mathfrak{d})}{\Phi(\mathfrak{d})}\Phi(\mathfrak{m})\Phi(\mathfrak{n})$, \tabto{10cm} \text {\normalfont where} $ \gcd(\mathfrak{m},\mathfrak{n})=\mathfrak{d}.$
	\end{enumerate}
\end{lem}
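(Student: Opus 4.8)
The plan is to deduce both identities from the Euler product form of the totient recorded in \hyperlink{lem7171TFDD.200f}{Lemma}~\ref{lem7171TFDD.200f}(ii), namely $\Phi(\mathcal{I})=N(\mathcal{I})\prod_{\mathfrak{p}\mid\mathcal{I}}\!\left(1-\tfrac{1}{N(\mathfrak{p})}\right)$, together with two standard facts in a Dedekind domain: the ideal norm is completely multiplicative, $N(\mathfrak{m}\mathfrak{n})=N(\mathfrak{m})N(\mathfrak{n})$, and every ideal factors uniquely into prime ideals, so the set of primes dividing a product is the union of the sets of primes dividing the factors. Since (i) is the case $\mathfrak{d}=(1)$ of (ii) (with $N((1))=1$ and $\Phi((1))=1$), it is enough to prove (ii), though I would record the clean direct argument for (i) first.

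First I would treat (i): if $\gcd(\mathfrak{m},\mathfrak{n})=(1)$, the Chinese Remainder Theorem gives a ring isomorphism $\mathcal{R}/\mathfrak{m}\mathfrak{n}\cong\mathcal{R}/\mathfrak{m}\times\mathcal{R}/\mathfrak{n}$, hence an isomorphism of unit groups, and comparing cardinalities yields $\Phi(\mathfrak{m}\mathfrak{n})=\Phi(\mathfrak{m})\Phi(\mathfrak{n})$. Alternatively: no prime ideal divides both $\mathfrak{m}$ and $\mathfrak{n}$, so the Euler product over $\mathfrak{p}\mid\mathfrak{m}\mathfrak{n}$ splits as the product of the Euler products over $\mathfrak{m}$ and over $\mathfrak{n}$, and multiplicativity of $N$ finishes the factorization.

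For (ii), set $\mathfrak{d}=\gcd(\mathfrak{m},\mathfrak{n})$. The primes dividing $\mathfrak{m}\mathfrak{n}$ are exactly $\{\mathfrak{p}:\mathfrak{p}\mid\mathfrak{m}\}\cup\{\mathfrak{p}:\mathfrak{p}\mid\mathfrak{n}\}$, whose intersection is $\{\mathfrak{p}:\mathfrak{p}\mid\mathfrak{d}\}$, so by inclusion--exclusion on the index sets,
\[
\prod_{\mathfrak{p}\mid\mathfrak{m}\mathfrak{n}}\!\left(1-\frac{1}{N(\mathfrak{p})}\right)=\frac{\prod_{\mathfrak{p}\mid\mathfrak{m}}\!\left(1-\frac{1}{N(\mathfrak{p})}\right)\prod_{\mathfrak{p}\mid\mathfrak{n}}\!\left(1-\frac{1}{N(\mathfrak{p})}\right)}{\prod_{\mathfrak{p}\mid\mathfrak{d}}\!\left(1-\frac{1}{N(\mathfrak{p})}\right)}.
\]
Multiplying by $N(\mathfrak{m}\mathfrak{n})=N(\mathfrak{m})N(\mathfrak{n})$ and using $\prod_{\mathfrak{p}\mid\mathfrak{d}}(1-1/N(\mathfrak{p}))^{-1}=N(\mathfrak{d})/\Phi(\mathfrak{d})$ — again \hyperlink{lem7171TFDD.200f}{Lemma}~\ref{lem7171TFDD.200f}(ii) — each of the three products is absorbed into the corresponding totient, which gives $\Phi(\mathfrak{m}\mathfrak{n})=\frac{N(\mathfrak{d})}{\Phi(\mathfrak{d})}\Phi(\mathfrak{m})\Phi(\mathfrak{n})$.

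I do not expect a genuine obstacle: the whole argument is bookkeeping once the Euler product is available. The only points deserving a sentence are the multiplicativity of the ideal norm and the behaviour of prime divisors under products and gcd's, both immediate from unique factorization of ideals in $\mathcal{D}$; one should also note that $N(\mathfrak{p})>1$ for every proper prime $\mathfrak{p}$, so no factor $1-1/N(\mathfrak{p})$ vanishes and the division above is legitimate.
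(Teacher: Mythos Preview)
Your argument is correct: the Euler product from Lemma~\ref{lem7171TFDD.200f}(ii), together with multiplicativity of the norm and inclusion--exclusion on the sets of prime-ideal divisors, yields (ii), and (i) is the special case $\mathfrak{d}=(1)$ (or, as you note, follows directly from CRT). The paper itself states this lemma without proof---it is the Dedekind-domain analogue of the unproved Lemma~\ref{lem5005.100h} for $\Z$---so there is nothing to compare against; your approach is the standard one and matches what the paper presumably has in mind.
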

\begin{exa}\label{exa7171TFDD.200j1}\hypertarget{exa7171TFDD.200j}{\normalfont Let $q=p^m$ be a prime power and $\mathcal{R}_q=\F_q[x]/\langle f(x)\rangle$. If $f(x)\in \F_q[x]$ is irreducible of degree $\deg f=n$, then the totient function over the finite ring $\mathcal{R}_q$ is defined as follows. 
		\begin{align}\label{eq1771TFDD.200exei1}
			\Phi_q(f)&=\#\{r(x)\in \F_q[x]:\gcd(f,r)=c\}\\[.2cm]
			&=N(f)\prod_{r(x) \mid  f(x)}\left (1-\frac{1}{N(r)}\right )\nonumber \\[.2cm]
			&=N(f)-1,\nonumber 
		\end{align}		
where the polynomial $r$ ranges over the irreducible factors of $f$, $N(f)=q^{\deg f}$ is the norm of $f$ and $c\in \F_q^{\times}$ is a constant, see \hyperlink{lef7171TFDD.200f}{Lemma} \ref{lem7171TFDD.200f}. Observe that $\mathcal{R}_q^{\times}$ is the multiplicative group of units and $\#\mathcal{R}_q^{\times}=N(f)-1=q^n-1$.\\
	}		
\end{exa}
Earlier applications of the example above appears in {\color{red}\cite[Theorem 11]{OO1934}}, \cite{CL1952A}, \cite{DH1968}, {\color{red}\cite[p.\;219]{LS1987}}, et alia. \\

\begin{exa}\label{exa7171TFDD.200j2}\hypertarget{exa7171TFDD.200j}{\normalfont Let $q=p^m$ be a prime power and $\mathcal{R}_q=\F_q[x]/\langle f(x)\rangle$. If $g(x)\in \F_q[x]$, then the totient function over the finite ring $\mathcal{R}_q$ is defined as follows. 
		\begin{align}\label{eq1771TFDD.200exei2}
			\Phi_q(g)&=\#\{r(x)\in \F_q[x]:\gcd(g,r)=c\}\\[.2cm]
			&=N(g)\prod_{r(x) \mid  g(x)}\left (1-\frac{1}{N(r)}\right ),\nonumber 
		\end{align}		
where the polynomial $r(x)$ ranges over the irreducible factors of $g(x)$, $N(g)=q^{\deg g}$ is the norm of $g(x)$ and $c\in \F_q^{\times}$ is a constant,	see \hyperlink{lem7171TFDD.200f}{Lemma} \ref{lem7171TFDD.200f}.
	}		
\end{exa}

\begin{lem} \label{lem1771TFDD.300FF}\hypertarget{lem1771TFDD.300FF}{Lemma} If $q$ is an odd prime power and $n\geq2$, then the number of normal elements contained in the finite field $\F_{q^n}$ is 
	 $$\Phi(x^n-1)=q^n\prod_{d(x)\mid x^n-1}\left(1-\frac{1}{q^{\deg d(x)}} \right), $$
where $d(x)$ varies over the irreducible factors of $x^n-1$ .
\end{lem}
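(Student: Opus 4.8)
The plan is to realize the normal elements as the units of the polynomial residue ring $\mathcal{R}=\F_q[x]/(x^n-1)$ and then apply the multiplicative formula for its totient function.

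First I would equip $\F_{q^n}$ with its $\F_q[x]$-module structure in which $x$ acts by the Frobenius $\sigma$, so that $r(x)$ acts by the linearized operator $r\circ(\cdot)$ of Definition~\ref{dfn7171CFNE.200m}; since $\sigma^n=\mathrm{id}$, the polynomial $x^n-1$ annihilates $\F_{q^n}$ and the action factors through $\mathcal{R}$. By Definition~\ref{dfn2727PNT.200C}, $\alpha$ is normal precisely when $\Ord_q\alpha=x^n-1$, i.e.\ when the $\F_q$-linear map $\mathcal{R}\to\F_{q^n}$, $r\mapsto r\circ\alpha$, is injective; since both sides have $\F_q$-dimension $n$, injectivity is equivalent to $\alpha$ generating $\F_{q^n}$ as an $\mathcal{R}$-module.

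Next I would invoke the structural fact that $\F_{q^n}$ is a free $\mathcal{R}$-module of rank one, equivalently that the minimal polynomial of $\sigma$ on $\F_{q^n}$ is exactly $x^n-1$. This is the normal basis theorem; it follows from the $\F_q$-linear independence of the distinct automorphisms $1,\sigma,\dots,\sigma^{n-1}$ (Dedekind's linear independence of distinct characters), which forces the minimal polynomial to have degree at least $n=\dim_{\F_q}\F_{q^n}$, while $x^n-1$ is an annihilator of degree $n$. Fixing any normal element $\eta$ then gives an $\mathcal{R}$-module isomorphism $\mathcal{R}\to\F_{q^n}$, $r\mapsto r\circ\eta$, under which the generators of $\F_{q^n}$, and hence by the previous step the normal elements, correspond exactly to the units of $\mathcal{R}$. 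Therefore the number of normal elements equals $\#\mathcal{R}^{\times}=\Phi_q(x^n-1)$, the polynomial totient of Example~\ref{exa7171TFDD.200j2} (and the cardinality of the unit group $\mathcal{U}$ of Lemma~\ref{lem2727CEES.650U}).

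Finally, applying the multiplicative identity for the totient over the Dedekind domain $\F_q[x]$, namely Lemma~\ref{lem7171TFDD.200f}(ii) as specialized in Example~\ref{exa7171TFDD.200j2}, with $N(x^n-1)=q^{\deg(x^n-1)}=q^n$ and $N(d)=q^{\deg d}$ for each monic irreducible $d(x)\mid x^n-1$, gives $\Phi_q(x^n-1)=q^n\prod_{d(x)\mid x^n-1}\bigl(1-q^{-\deg d(x)}\bigr)$, which is the asserted formula; here the product runs over \emph{distinct} irreducible factors, so in fact neither the oddness of $q$ nor any condition on $\gcd(n,q)$ is needed. I expect the one genuine obstacle to be the third step, i.e.\ the cyclicity of $\F_{q^n}$ over $\mathcal{R}$ (the normal basis theorem). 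If one prefers to avoid quoting it, an alternative is a direct Mobius inclusion-exclusion over the divisors $e(x)\mid x^n-1$: count the $\alpha$ annihilated by $\bigl((x^n-1)/e(x)\bigr)\circ(\cdot)$ via the normal element test of Lemma~\ref{lem2727PNT-I}, and recognize the resulting alternating sum as the totient product, which is the same mechanism that proves the counting identity Lemma~\ref{lem7171TFDD.200f}(iii).
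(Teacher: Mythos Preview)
Your proposal is correct. The paper, however, does not supply its own proof of this lemma: it is stated without argument, followed by a worked example for $n=2$, and the subsequent integer-parameter variant (Lemma~\ref{lem1771TFDD.300FF-I}) is deferred to \cite{HT2018}. So there is nothing in the paper to compare against beyond the surrounding scaffolding you already cite (Definition~\ref{dfn2727PNT.200C}, Lemma~\ref{lem7171TFDD.200f}, Example~\ref{exa7171TFDD.200j2}).

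Your route---identify $\F_{q^n}$ with $\mathcal{R}=\F_q[x]/(x^n-1)$ as an $\mathcal{R}$-module via the normal basis theorem, so that normal elements correspond to units, and then read off $\#\mathcal{R}^{\times}$ from the multiplicative totient formula---is the standard one and is exactly in the spirit of the paper's Section~\ref{S7171TFDD-A}. Your justification of cyclicity via Dedekind's independence of characters (forcing the minimal polynomial of $\sigma$ to have degree $n$) is clean and self-contained. Your remark that the oddness hypothesis on $q$ is unnecessary is also correct: the product is over \emph{distinct} irreducible factors, and the formula $\Phi_q(f)=N(f)\prod_{r\mid f}(1-N(r)^{-1})$ holds regardless of whether $f$ is squarefree, so the case $p\mid n$ is covered as well.
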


In the smallest such finite fields $\F_{q^2}$, this counting function has the value
\begin{equation}\label{eq1771TFDD.200exei5}
	\Phi_q(x^2-1)=q^2\left (1-\frac{1}{q}\right )^2 =(q-1)^2
\end{equation}		
since  $x-1,x+1\mid x^2-1$. For example, $\F_{2^2}=\{0,1,\alpha, \alpha^2:\alpha^2+\alpha+1=0\}$ has a single normal element, namely $\alpha+1$. This is precisely as predicted $\Phi_q(x^2-1)=(q-1)^2=1$ normal element. \\

An integer version of this result for the umber of normal elements in finite fields suitable for computer calculation is also known. 

\begin{lem} \label{lem1771TFDD.300FF-I}\hypertarget{lem1771TFDD.300FF-I} Let $p = \text{char }\F_q$ and let $n = mp^v$ where $p$ does not divide $m$. For $d$ relatively prime to $q$ let $\ord_d(q)$ denote the multiplicative order of $q \bmod d$, and let $\varphi$ be the Euler totient function. Then, the finite field $\F_{q^n}$ contains
	
	 $$\Phi(x^n-1)=q^n\prod_{d\mid m}\left(1-\frac{1}{q^{\ord_d q}} \right)^{\varphi(d)/\ord_d q} $$ normal elements.
\end{lem}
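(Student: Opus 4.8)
The plan is to deduce the formula from \hyperlink{lem1771TFDD.300FF}{Lemma} \ref{lem1771TFDD.300FF} by first stripping the $p$-part of $n$ and then regrouping the product over the distinct irreducible factors of $x^n-1$ according to the cyclotomic factorization. Since $p=\tchar\F_q$ and $n=mp^v$ with $p\nmid m$, the Frobenius identity in $\F_q[x]$ yields
\[
x^n-1=x^{mp^v}-1=(x^m-1)^{p^v}.
\]
Hence $x^n-1$ and $x^m-1$ have exactly the same set of distinct monic irreducible factors, while $\deg(x^n-1)=n$; moreover $\gcd(m,q)=1$ (because $p\nmid m$ and $q=p^k$), so $x^m-1$ is separable and is the squarefree product of those irreducibles. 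Applying \hyperlink{lem1771TFDD.300FF}{Lemma} \ref{lem1771TFDD.300FF} (whose conclusion holds just as well when $q$ is even) gives
\[
\Phi(x^n-1)=q^n\prod_{r(x)}\left(1-\frac{1}{q^{\deg r(x)}}\right),
\]
the product running over the distinct monic irreducible factors $r(x)$ of $x^m-1$; notice the multiplicity $p^v$ survives only in the factor $q^n$.

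Next I would invoke the classical factorization $x^m-1=\prod_{d\mid m}\Phi_d(x)$ into cyclotomic polynomials, which holds over $\Z$ and hence over $\F_q$, the cyclotomic factors remaining pairwise coprime and squarefree since $\gcd(m,q)=1$. The only nonformal ingredient is the behaviour of $\Phi_d$ over a finite field (see, e.g., \cite{LN1997}): when $\gcd(d,q)=1$, a primitive $d$th root of unity lies in $\F_{q^e}$ exactly when $\ord_d q\mid e$, so it generates $\F_{q^{\ord_d q}}$ over $\F_q$ and has minimal polynomial of degree $\ord_d q$; the $\varphi(d)$ primitive $d$th roots therefore split into Frobenius orbits of size $\ord_d q$, and $\Phi_d(x)$ factors over $\F_q$ into $\varphi(d)/\ord_d q$ distinct monic irreducibles, each of degree $\ord_d q$. (As a check, each $d$ contributes $(\ord_d q)(\varphi(d)/\ord_d q)=\varphi(d)$ to the degree, and $\sum_{d\mid m}\varphi(d)=m$.)

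Finally, in the displayed product for $\Phi(x^n-1)$ I would collect the factors $r(x)$ by the unique $d\mid m$ with $r(x)\mid\Phi_d(x)$; each $d$ then contributes exactly $\varphi(d)/\ord_d q$ identical factors $1-q^{-\ord_d q}$, whence
\[
\Phi(x^n-1)=q^n\prod_{d\mid m}\left(1-\frac{1}{q^{\ord_d q}}\right)^{\varphi(d)/\ord_d q},
\]
as claimed. The only real obstacle is the degree-$\ord_d q$ factorization of $\Phi_d$ over $\F_q$, which is classical; the rest is the passage $x^n-1\mapsto x^m-1$ (justified by separability) together with the mechanical bookkeeping of grouping equal factors.
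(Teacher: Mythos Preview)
Your argument is correct and is the standard derivation: reduce to the separable kernel $x^m-1$ via the Frobenius identity $x^{mp^v}-1=(x^m-1)^{p^v}$, then regroup the product in \hyperlink{lem1771TFDD.300FF}{Lemma}~\ref{lem1771TFDD.300FF} using the cyclotomic decomposition and the classical fact that $\Phi_d(x)$ splits over $\F_q$ into $\varphi(d)/\ord_d q$ irreducibles of degree $\ord_d q$. The paper does not supply its own proof of this lemma; it simply cites \cite[Theorem~1]{HT2018} for a complete argument, so there is nothing to compare against. One remark: the cyclotomic factorization step you import from \cite{LN1997} is in fact recorded later in the paper as \hyperlink{lem4545EOFW.300c}{Lemma}~\ref{lem4545EOFW.300c}, so you could keep the argument self-contained by citing that instead; and your parenthetical that \hyperlink{lem1771TFDD.300FF}{Lemma}~\ref{lem1771TFDD.300FF} holds for even $q$ as well (despite the ``odd'' hypothesis in its statement) is a necessary observation, since the present lemma places no parity restriction on $q$.
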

A completely detailed proof appears in {\color{red} \cite[Theorem 1]{HT2018}}. Apparently, the normal base theorem for finite fields was conjecture by  Eisenstein and Schonemann circa 1850. The complete proof was given by Hensel in \cite{HK1888}. \\

This topic is important in the theory of primitive roots over certain rings and integral domains. There is an extensive literature on the generalization of the primitive root conjecture to number fields and function fields, consult \cite{HJ1983}, \cite{HJ1986}, \cite{RN1999}, \cite{ZV2006}, \cite{PS1995}, \cite{RH2002}, \cite{SP2021}, and the references within. \\
Assuming $n\mid q-1$, the probability of normal elements in finite field has the exact form
\begin{align}\label{eq1771TFDD.300FF0}
\frac{\Phi(x^n-1)}{q^n-1}
&=\frac{q^n}{q^n-1}\left(1-\frac{1}{q} \right)^{n
}\\[.3cm]
&=\frac{q^n}{q^n-1}e^{n\log \left(1-\frac{1}{q} \right)}\nonumber\\[.3cm]
&=1-\frac{n}{ q}+O\left (\frac{n(n-1}{ q^2}\right )\nonumber.
\end{align} 
A different asymptotic for the lower bound suitable for all $n\in [2,q-1]$ and in terms of $\log q^n$ is provided here. 
\begin{cor} \label{cor1771TFDD.300FF-A}\hypertarget{cor1771TFDD.300FF-A} Let $q=p^k\geq 2^k$ be a prime power and let $n\geq2$. Then $$\frac{\Phi(x^n-1)}{q^n-1}\gg \frac{1}{\log q^{n}} $$
	uniformly for all $q\geq 2$, $k\geq 1$ and $n\geq 2.$
\end{cor}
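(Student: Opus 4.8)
The plan is to start from the product formula of Lemma~\ref{lem1771TFDD.300FF-I} (equivalently Lemma~\ref{lem1771TFDD.300FF} when $q$ is odd). Write $n=mp^{v}$ with $p\nmid m$; then $x^{m}-1$ is separable and $x^{n}-1=(x^{m}-1)^{p^{v}}$ has the same distinct irreducible factors, so
$$
\frac{\Phi(x^{n}-1)}{q^{n}}=\prod_{\substack{d(x)\mid x^{m}-1\\ d\ \mathrm{irreducible}}}\Bigl(1-\frac{1}{q^{\deg d}}\Bigr).
$$
Grouping the factors by degree, let $g_{j}$ be the number of monic irreducible factors of $x^{m}-1$ of degree exactly $j$; since $x^{m}-1$ is its own radical, $\sum_{j\ge1}j\,g_{j}=\deg(x^{m}-1)=m\le n$. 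The crucial input is a two-sided bound on $g_{j}$: trivially $g_{j}\le m/j$, while $j\,g_{j}$ counts the roots of $x^{m}-1$ of exact degree $j$ over $\F_{q}$, all of which lie among the $\gcd(m,q^{j}-1)\le q^{j}-1$ elements of $\F_{q^{j}}^{\times}$ whose order divides $m$; hence also $g_{j}\le(q^{j}-1)/j$.

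I would then pass to logarithms. Applying $\log(1-t)\ge -t/(1-t)$ for $0<t<1$ with $t=q^{-j}$, so that $t/(1-t)=(q^{j}-1)^{-1}$, gives
$$
\log\frac{\Phi(x^{n}-1)}{q^{n}}=\sum_{j\ge1}g_{j}\log\Bigl(1-\frac{1}{q^{j}}\Bigr)\ \ge\ -\sum_{j\ge1}\frac{g_{j}}{q^{j}-1}\ =:\ -S ,
$$
and it suffices to prove $S\le\log\log q^{n}+O(1)$ with an absolute implied constant. Set $J:=\lfloor\log_{q}(m+1)\rfloor$. For $1\le j\le J$ use $g_{j}\le(q^{j}-1)/j$ to get $\sum_{j\le J}g_{j}/(q^{j}-1)\le\sum_{j\le J}1/j\le 1+\log J$; for $j>J$ use $g_{j}\le m/j$ together with $q^{J+1}>m$, so that $\sum_{j>J}m/\bigl(j(q^{j}-1)\bigr)$ is dominated by a geometric series bounded by an absolute constant. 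Since $J\le\log_{q}(n+1)=\log(n+1)/\log q$, one has $\log J\le\log\log(n+1)-\log\log q\le\log\log q^{n}+O(1)$; the residual case $m\le q-1$ (where $J=0$) is immediate, since there $S=O(1)$ outright.

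Finally, using $q^{n}/(q^{n}-1)\ge1$,
$$
\frac{\Phi(x^{n}-1)}{q^{n}-1}\ \ge\ \frac{q^{n}}{q^{n}-1}\,e^{-S}\ \ge\ e^{-S}\ \gg\ \frac{1}{\log q^{n}},
$$
uniformly for all $q\ge2$, $k\ge1$, $n\ge2$, which is the claim. I expect the main obstacle to be the uniformity of the constants as $q$ descends to $2$: the estimate must simultaneously handle the regime where linear factors dominate — where the bound $g_{1}\le q-1$ is essential and the weaker $g_{1}\le m$ is useless for small $q$ — and the regime where the irreducible factors are spread over all degrees up to $\log_{q}m$, which is exactly where the $\sum_{j}1/j\approx\log\log$ term is born. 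Splitting at $J=\lfloor\log_{q}(m+1)\rfloor$ and using on each range whichever of the two bounds on $g_{j}$ is sharper is the device that makes both regimes fit under a single absolute constant.
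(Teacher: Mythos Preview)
Your proof is correct and in fact more complete than the paper's own argument. The paper does not group factors by degree or introduce any threshold $J$; instead it asserts, without justification, that of the two cases $n\mid q-1$ and $n\nmid q-1$ the former ``specifies the lowest bound possible,'' and then treats only that case, where $x^{n}-1$ splits into $n$ linear factors and $\Phi(x^{n}-1)/q^{n}=(1-1/q)^{n}$. On the restricted range $2\le n\le q-1$ this quantity is bounded below by an absolute positive constant (since $(1-1/q)^{q-1}\to e^{-1}$), so the paper's final step---checking that $(1-1/x)^{n}-1/(5n\log x)>0$ on $[2,x-1]\times[2,\infty)$---is easy there. But the reduction itself, that the general $(q,n)$ is no worse than the fully split case, is never proved; and the naive comparison $\Phi(x^{n}-1)/q^{n}\ge(1-1/q)^{n}$ (each of the at most $n$ irreducible factors contributes $\ge 1-1/q$) is useless once $n\gg q$, e.g.\ $q=2$ and $n$ large and odd, where $(1/2)^{n}$ is far smaller than $1/(n\log 2)$.

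Your device---the two-sided bound $g_{j}\le\min\{m/j,(q^{j}-1)/j\}$ together with the cut at $J=\lfloor\log_{q}(m+1)\rfloor$---is exactly what is needed to cover that regime uniformly, and it actually yields the sharper conclusion $\Phi(x^{n}-1)/q^{n}\gg 1/J\gg(\log q)/\log(n{+}1)$, well beyond the stated $1/(n\log q)$. So your approach is genuinely different and, as written, closes a gap the paper leaves open.
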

\begin{proof}[\textbf{Proof}] There are two important cases indexed by $n\mid q-1$ and $n\nmid q-1$. The case $n\mid q-1$ specifies the lowest bound possible. Thus, to derive a lower bound in terms of $\log q^n$, it is sufficient to consider this case. Toward this goal, suppose that $n\mid q-1$. Furthermore, this case implies that $n<q$, the finite field contains all the $n$th roots of unity, and the polynomial $x^n-1\in \F_q[x]$ splits into linear factors. Next, by \hyperlink{lem1771TFDD.300FF}{Lemma} \ref{lem1771TFDD.300FF} there is a lower bound of the form
\begin{align}\label{eq1771TFDD.300FF1}
\frac{\Phi(x^n-1)}{q^n-1}&=\frac{q^n}{q^n-1}\prod_{d(x)\mid x^n-1}\left(1-\frac{1}{q^{\deg d(x)}} \right)\\[.3cm]
&=\frac{q^n}{q^n-1}\left(1-\frac{1}{q} \right)^{n}\nonumber\\[.3cm]
&\gg\frac{1}{\log q^n}\nonumber.
\end{align}
Set $x=q>0$ and let $n\in [2,x-1]$ be an integer. Then, the conclusion follows from the nonnegativity of the function
\begin{equation}
f(n,x)=\left(1-\frac{1}{x} \right)^{n}-\frac{1}{5\log x^{n}}>0
\end{equation}
over the domain $[2,x-1]\times[2,\infty]$. 
\end{proof}	
\begin{cor} \label{cor1771TFDD.300FF-C}\hypertarget{cor1771TFDD.300FF-C} Let $q=p^k\geq 2^k$ be a prime power and let $n\geq2$. Then $$\frac{\Phi(x^n-1)}{q^n-1}\gg \frac{1}{(\log\log q^{n})} $$
	uniformly for all $q\geq 8$, $k\geq 1$ and $n\geq 2.$
\end{cor}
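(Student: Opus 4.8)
The plan is to begin from Lemma~\ref{lem1771TFDD.300FF}, which gives
\[
\frac{\Phi(x^n-1)}{q^n-1}=\frac{q^n}{q^n-1}\prod_{d(x)\mid x^n-1}\Bigl(1-\frac{1}{q^{\deg d(x)}}\Bigr)>\prod_{d(x)\mid x^n-1}\Bigl(1-\frac{1}{q^{\deg d(x)}}\Bigr),
\]
where $d(x)$ runs over the distinct monic irreducible factors of $x^n-1$, so that $\sum_{d(x)\mid x^n-1}\deg d(x)\le n$. I would then fix the threshold $E=\lceil \log n/\log q\rceil$ (so that $q^E\ge n$) and split this product into the part coming from factors of degree $\le E$ and the part coming from factors of degree $>E$, bounding the two pieces separately.

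The high-degree part is easy: a factor of degree $>E$ is at least $1-q^{-E}\ge 1-1/n$, and since such factors have degree exceeding $E$ and total degree at most $n$, there are fewer than $n/E$ of them; hence this part is at least $(1-1/n)^{n/E}=\bigl((1-1/n)^n\bigr)^{1/E}\ge (1/4)^{1/E}\ge 1/4$. For the low-degree part I would first enlarge the index set to all monic irreducibles of degree $\le E$ (this only shrinks the product, since every factor is $<1$) and then invoke a Mertens-type bound in $\F_q[x]$: if $I_q(e)$ denotes the number of monic irreducibles of degree $e$ over $\F_q$, then $e\,I_q(e)\le q^e$, and combining this with $-\log(1-u)\le u+u^2$ for $u=q^{-e}\le\tfrac12$ gives
\[
\sum_{\deg g\le E}\Bigl(-\log\bigl(1-q^{-\deg g}\bigr)\Bigr)=\sum_{e=1}^{E}I_q(e)\bigl(-\log(1-q^{-e})\bigr)\le\sum_{e=1}^{E}\frac1e+\sum_{e\ge1}\frac{q^{-e}}{e}\le\log E+O(1),
\]
uniformly in $q\ge2$; exponentiating yields $\prod_{\deg g\le E}(1-q^{-\deg g})\gg 1/E$ with an absolute constant.

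Multiplying the two bounds gives $\Phi(x^n-1)/(q^n-1)\gg 1/E$, and it remains to pass from $1/E$ to $1/\log\log q^n$. This is where the hypothesis $q\ge 8$ is used: it forces $\log q\ge\log 8>2$, whence $E\le \log n/\log q+1<\log n+\log\log q=\log\log q^n$ for every $n\ge2$, an elementary inequality. Therefore $\Phi(x^n-1)/(q^n-1)\gg 1/\log\log q^n$ with an absolute implied constant, uniformly for all $q\ge8$, $k\ge1$, $n\ge2$.

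The main obstacle is sharpness in the low-degree estimate. The crude bound $-\log(1-q^{-e})\le 2q^{-e}$ together with $I_q(e)\le q^e/e$ produces only $\prod_{\deg g\le E}(1-q^{-\deg g})\gg 1/E^2$, hence $\Phi(x^n-1)/(q^n-1)\gg (\log q)^2/(\log n)^2$, which is $o(1/\log\log q^n)$ for fixed $q$ as $n\to\infty$ and so does not suffice; one must keep the exact leading term $q^{-e}$ in $-\log(1-q^{-e})$ and the precise count $I_q(e)=q^e/e+O(q^{e/2})$, and check that all error terms are uniform in $q$. Equivalently, one may run the $\sigma$--$\varphi$ identity of Lemma~\ref{lem9955P.400TL} in $\F_q[x]$, bounding the product of the totient and divisor sums of a squarefree $f$ below by $(1-q^{-1})q^{2\deg f}$ and then bounding the divisor sum of the radical of $x^n-1$ above by the same Mertens sum; this merely recasts the identical estimate.
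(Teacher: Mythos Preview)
Your argument is correct and considerably more detailed than what the paper offers. The paper's own proof is a single sentence, ``The proof is similar to the previous one,'' referring back to Corollary~\ref{cor1771TFDD.300FF-A}, whose proof in turn asserts that the extremal configuration is $n\mid q-1$ (so that $x^n-1$ splits into linear factors), reduces to bounding $(1-1/q)^n$ from below, and appeals to the positivity of an auxiliary two-variable function on a box. Your approach is genuinely different: rather than isolating a putative worst case, you bound the full product directly by splitting at the degree threshold $E=\lceil\log n/\log q\rceil$, controlling the large-degree factors by a counting argument and the small-degree factors by a Mertens-type estimate $\prod_{\deg g\le E}(1-q^{-\deg g})\gg 1/E$ in $\F_q[x]$. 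This buys you uniformity in $q$ and $n$ without having to justify that the completely-split case really is extremal, and it makes transparent exactly where the condition $q\ge 8$ enters (namely in the passage $E\ll\log\log q^n$). The paper's route is shorter on the page but leans on an unproved extremality claim and an unproved function inequality; yours is self-contained. One cosmetic point: your displayed inequality $E<\log n+\log\log q$ is a hair tight at $(q,n)=(8,2)$ and relies on the numerical margin $\log\log 8\approx 0.73$; since you only need $E\ll\log\log q^n$, it would be cleaner to state it as $E\le C\log\log q^n$ for an absolute $C$.
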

\begin{proof}[\textbf{Proof}] The proof is similar to the previous one.
\end{proof}
\section{Estimates of Omega Functions} \label{S4545EOFW-W}\hypertarget{S4545EOFW-W}
The little omega function $\omega:\N\longrightarrow \N$, defined by $\omega(n)=\#\{p\mid n:p \text{ is prime }\}$, is a well studied function in number theory. It has the explicit upper bound $\omega(n)\leq 2\log n/\log\log n$, see {\color{red}\cite[Proposition 7.10]{DL2012}} {\color{red}\cite[Theorem 2.10]{MV2007}} and \cite{EP1985}. It emerges in the finite sum
\begin{equation} \label{eq4545EOFW400j}
	\sum _{d\mid n}|\mu(n)| 
	=2^{\omega(n)}		
	\ll n^{\varepsilon},
\end{equation}
where $\varepsilon>$ is an arbitrarily small number, which is frequently used in finite field analysis. The corresponding estimates for the big omega function are also important in finite field analysis.

\begin{dfn}\label{dfn4545EOFW.300d}\hypertarget{dfn4545EOFW.300d} {\normalfont Let $q$ be a prime power and let $\F_q[x]$ be the ring of polynomials over the finite field $\F_q$. The big omega function $\Omega_q:\N\longrightarrow \N$ is defined by $$\Omega_q(f(x))=\#\{r(x)\mid f(x):r(x)\text{ is irreducible }\}.$$
} 
\end{dfn}
For any polynomial $f(x)\in \F_q[x]$ the trivial extreme values $1\leq \Omega_q(f(x))\leq n$ are clear, but sharper estimates are possible too. An estimate for the frequently used polynomial $x^n-1$ can be derived from its factorization in term of cyclotomic polynomials
\begin{equation} \label{eq4545EOFW.300d}
x^n-1=\prod_{d\mid n}\phi_d(x),
\end{equation} 
where $\phi_d(x)$ is the $d$th cyclotomic polynomial, and the regular structure of these polynomials. In a separable finite field of characteristic $q\mid n$ the nontrivial $q$th roots unity do not exist. Thus   
\begin{equation} \label{eq4545EOFW.300e}
	x^n-1=	(x^q)^{n/q}-1=\prod_{d\mid n/q}\phi_d(x^q).
\end{equation} 
\begin{lem}\label{lem4545EOFW.300c}\hypertarget{lem4545EOFW.300c} Let $n\geq 1$ and the prime power $q$ be relatively prime integers, and let $\ord_n q$ be the multiplicative order of $q$ modulo $n$. Then
	\begin{enumerate}[font=\normalfont, label=(\roman*)]
		\item $\displaystyle \phi_n(x)=f_1(x)f_2(x)\cdot f_t(x)$, where $\deg f_i=\ord_n q$ and $t=\varphi(n)/\ord_n q$.
	\end{enumerate}
\end{lem}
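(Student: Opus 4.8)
The plan is to identify the irreducible factors of $\phi_n(x)$ over $\F_q$ with the Frobenius orbits of the primitive $n$th roots of unity in $\overline{\F_q}$, and to show that every such orbit has the same size $\ord_n q$; the count $t=\varphi(n)/\ord_n q$ then drops out of a degree comparison.

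First I would record separability. Since $\gcd(n,q)=1$ and $q=p^k$, we have $p\nmid n$, so the formal derivative $n x^{n-1}$ of $x^n-1$ is a nonzero polynomial with $x\nmid x^n-1$; hence $\gcd(x^n-1,nx^{n-1})=1$ and $x^n-1\in\F_q[x]$ is separable. By \eqref{eq4545EOFW.300d}, $\phi_n(x)\mid x^n-1$, so $\phi_n(x)$ is separable as well and therefore splits into \emph{distinct} monic irreducible factors over $\F_q$. Moreover the roots of $\phi_n(x)$ in $\overline{\F_q}$ are exactly the elements of multiplicative order $n$, the primitive $n$th roots of unity, and there are $\varphi(n)$ of them.

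Next I would pin down the degree of an arbitrary irreducible factor $f(x)$ of $\phi_n(x)$. Let $\zeta\in\overline{\F_q}$ be a root of $f$, so $\zeta$ is a primitive $n$th root of unity and $\deg f=[\F_q(\zeta):\F_q]$. Since $\F_q(\zeta)$ is a finite extension of $\F_q$ it equals $\F_{q^m}$ for the least $m\geq1$ with $\zeta\in\F_{q^m}$; and because $\zeta$ has order $n$ in the cyclic group $\F_{q^m}^\times$, one has $\zeta\in\F_{q^m}$ iff $\zeta^{q^m-1}=1$ iff $n\mid q^m-1$. The least such $m$ is by definition $\ord_n q$, so $\deg f=\ord_n q=:d$ for \emph{every} irreducible factor $f$ of $\phi_n(x)$. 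Comparing degrees, $\varphi(n)=\deg\phi_n=t\cdot d$, whence $t=\varphi(n)/\ord_n q$; in particular $d\mid\varphi(n)$, so the stated count is an integer.

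The only point requiring care is the uniformity claim that all irreducible factors share the degree $\ord_n q$, equivalently that all primitive $n$th roots of unity generate the same extension $\F_{q^d}$ of $\F_q$. This is exactly where one uses that the membership condition ``$\zeta\in\F_{q^m}$'' depends on $\zeta$ only through its order $n$ (via $n\mid q^m-1$), and not on the particular primitive root chosen; once this is observed, the argument is a routine degree count, so I do not anticipate a genuine obstacle.
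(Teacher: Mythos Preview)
Your proof is correct and is the standard argument: identify the roots of $\phi_n$ with the primitive $n$th roots of unity in $\overline{\F_q}$, observe that the minimal field containing such a root is $\F_{q^d}$ with $d=\ord_n q$ (since $\zeta\in\F_{q^m}$ iff $n\mid q^m-1$), and count.

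The paper's printed proof, however, does not actually prove this lemma. It begins with ``(ii)'' (though the lemma has only a part (i)) and computes $\Omega_q(x^n-1)=\sum_{d\mid n}\varphi(d)/\ord_d q$, justifying the last step by asserting that ``each polynomial $\phi_d(x)$ has $\varphi(d)/\ord_d q$ irreducible factor[s] in $\F_q[x]$''---which is precisely the content of the lemma being proved. In other words, the paper's proof block is a misplaced proof of the following Lemma~\ref{lem4545EOFW.300d}(ii) that \emph{assumes} the present result rather than establishing it. Your argument therefore supplies exactly what the paper omits; the Frobenius-orbit/degree reasoning you give is the expected route, and nothing in the paper offers an alternative.
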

\begin{proof}[\textbf{Proof}] (ii) In light of the cyclotomic polynomial decomposition of the  polynomial $x^n-1$ in \eqref{eq4545EOFW.300d}, it is clear that
	\begin{eqnarray}\label{eq4545EOFW.300h}
		\Omega_q(x^n-1)&=& \Omega_q\left(\prod_{d\mid n}\phi_d(x) \right) \\[.2cm]
		&=&\sum_{d\mid n}\varphi(d)/\ord_d q\nonumber,
	\end{eqnarray} 
	since each polynomial $\phi_d(x)$ has $\varphi(d)/\ord_d q$ irreducible factor in $\F_q[x].$
\end{proof}

\begin{lem}\label{lem4545EOFW.300d}\hypertarget{lem4545EOFW.300d} Let $n\geq 1$ and the prime power $q$ be relatively prime integers, and let $\ord_d q$ be the multiplicative order of $q$ modulo $d$. Then
\begin{enumerate}[font=\normalfont, label=(\roman*)]
\item $\displaystyle \Omega_q(x^n-1)=2$\tabto{7cm}if $n$ is prime and $\ord_n q=n-1$.
\item $\displaystyle \Omega_q(x^n-1)=\sum_{d\mid n}\varphi(d)/\ord_d q,$
\tabto{7cm}if $n$ is any integer and $\ord_n q\geq1$.
\item $\displaystyle \Omega_q(x^n-1)\leq\varphi(n),$
\tabto{7cm}if $n$ is any integer and $\ord_n q\geq1$.
\end{enumerate}
\end{lem}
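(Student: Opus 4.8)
The three assertions all rest on the factorisation of $x^n-1$ over $\F_q$ into cyclotomic pieces, together with Lemma \ref{lem4545EOFW.300c}. I would begin by recording the structural input. Since $\gcd(n,q)=1$, the polynomial $x^n-1$ is separable and admits the pairwise coprime factorisation $x^n-1=\prod_{d\mid n}\phi_d(x)$; by Lemma \ref{lem4545EOFW.300c} each $\phi_d(x)$ splits over $\F_q$ into exactly $\varphi(d)/\ord_d q$ distinct monic irreducible polynomials, all of degree $\ord_d q$. Because distinct cyclotomic polynomials $\phi_d(x)$ share no common irreducible factor, the set of monic irreducible divisors of $x^n-1$ is the disjoint union, over $d\mid n$, of the sets of monic irreducible divisors of the $\phi_d(x)$. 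Counting cardinalities gives
\[
\Omega_q(x^n-1)=\sum_{d\mid n}\Omega_q(\phi_d(x))=\sum_{d\mid n}\frac{\varphi(d)}{\ord_d q},
\]
which is precisely part (ii). This computation is essentially the one already carried out inside the proof of Lemma \ref{lem4545EOFW.300c}, so part (ii) is little more than a restatement of it.

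For part (i) I would specialise the formula of part (ii) to a prime $n$. Then the only divisors of $n$ are $1$ and $n$: the factor $\phi_1(x)=x-1$ contributes a single irreducible, while the hypothesis $\ord_n q=n-1=\varphi(n)$ forces $\varphi(n)/\ord_n q=1$, that is, $\phi_n(x)$ is itself irreducible over $\F_q$ (equivalently $[\F_q(\zeta):\F_q]=\ord_n q=\deg\phi_n$ for $\zeta$ a primitive $n$th root of unity). Hence $\Omega_q(x^n-1)=1+1=2$.

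For part (iii) the plan is to bound the divisor sum coming from part (ii). The trivial estimate $\ord_d q\geq 1$ gives $\varphi(d)/\ord_d q\leq\varphi(d)$ for every $d\mid n$, and summing yields $\Omega_q(x^n-1)\leq\sum_{d\mid n}\varphi(d)=n$, which also follows at once from the degree bound $\Omega_q(f)\leq\deg f$. Sharpening this to the claimed $\Omega_q(x^n-1)\leq\varphi(n)$ is the genuine obstacle, and I expect it to be the hardest point of the lemma: one cannot afford the full contribution of the small divisors of $n$ (in particular $d=1$), so the argument must exploit that $\ord_d q$ is typically appreciably larger than $1$. For prime $n$, for instance, the sum collapses to $1+(n-1)/\ord_n q$, and this is $\leq n-1=\varphi(n)$ exactly when $\ord_n q\geq 2$; so I would attack (iii) either by restricting $n$ to the range where this margin is available, or by a more careful grouping of the divisors $d\mid n$ so that the terms with $\ord_d q=1$ are offset by terms with larger $\ord_d q$. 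Balancing this divisor sum is the step I expect to demand the most care.
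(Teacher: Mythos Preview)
Your treatment of parts (i) and (ii) is correct and matches the paper's approach exactly: the paper also invokes the cyclotomic factorisation $x^n-1=\prod_{d\mid n}\phi_d(x)$ together with Lemma \ref{lem4545EOFW.300c} to obtain $\Omega_q(x^n-1)=\sum_{d\mid n}\varphi(d)/\ord_d q$, and part (i) is (as you say) just the specialisation to prime $n$ with $\ord_n q=n-1$.

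For part (iii) your instinct that something is wrong is well founded. The paper's own argument for (iii) reads
\[
\sum_{d\mid n}\frac{\varphi(d)}{\ord_d q}\leq\sum_{d\mid n}\varphi(d)=\varphi(n),
\]
but the last equality is simply false: the Gauss identity gives $\sum_{d\mid n}\varphi(d)=n$, not $\varphi(n)$. In fact the inequality $\Omega_q(x^n-1)\leq\varphi(n)$ is false as stated. Your own computation already exhibits this: for prime $n$ the sum is $1+(n-1)/\ord_n q$, and when $n\mid q-1$ (so $\ord_n q=1$) this equals $n>n-1=\varphi(n)$. Concretely, $n=2$ and any odd $q$ gives $\Omega_q(x^2-1)=2>\varphi(2)=1$. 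So the difficulty you anticipated is not a matter of finding a sharper argument; the claim itself needs to be corrected (to $\Omega_q(x^n-1)\leq n$, which is what your trivial bound actually yields).
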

\begin{proof}[\textbf{Proof}] (ii) In light of the cyclotomic polynomial decomposition of the  polynomial $x^n-1$ in \eqref{eq4545EOFW.300d} and \hyperlink{lem4545EOFW.300c}{Lemma} \ref{lem4545EOFW.300c}, it is clear that for each $d\mid n$, the polynomial $\phi_d(x)$ has $t=\varphi(d)/\ord_d q$ irreducible factors $d_i(x)\in \F_q[x]$ of degree $\deg d_i=\ord_d q$, where $i\in [1,t]$. Hence,
\begin{eqnarray}\label{eq4545EOFW.300i}
\Omega_q(x^n-1)&=& \Omega_q\left(\prod_{d\mid n}\phi_d(x) \right) \\[.2cm]
&=&\sum_{d\mid n}\varphi(d)/\ord_d q\nonumber.
\end{eqnarray} 
(iii) This follows from 
\begin{equation} \label{eq4545EOFW.300k}
\sum_{d\mid n}\frac{\varphi(d)}{\ord_d q}\leq\sum_{d\mid n}\varphi(d)=\varphi(n),
\end{equation}
since $\ord_d q\geq1$ for any pair of integers $d$ and $q$.
\end{proof}

\section{Characteristic Functions for Primitive Elements}
\label{S7171CFPE-P} \hypertarget{S7171CFPE-P}
A primitive element generates the $\Z$-module

\begin{equation}\label{eq7171CFNE.100a}
	\F_{q^n}^{\times}\cong	\Z/(q^n-1)\Z.
\end{equation}	
The group of units $\left( \Z/(q^n-1)\Z\right)^{\times}$ has precisely $\varphi(q^n-1)$ units, and each unit is a primitive element. There several possible techniques that can be used to construct characteristic functions of primitive elements in the group of units $\left( \Z/(q^n-1)\Z\right)^{\times}$. The standard characteristic function for primitive elements in finite field $\F_{q^n}$ dependents on the factorization of the integer $q^n-1$. Whereas a new divisor-free characteristic function for primitive elements introduced here is not dependent on the factorization of the integer $q^n-1$. The basic analytic principles of these indicator functions are presented below. 
\subsection{Divisor Dependent Characteristic Functions for Primitive Elements}
A representation of the characteristic function dependent on the orders of the cyclic groups is given below. This representation is sensitive to the primes decompositions $q^n-1=p_1^{e_1}p_2^{e_2}\cdots p_t^{e_t}$, with $p_i$ prime and $e_i\geq1$, of the order of the cyclic group $q^n-1=\#\F_{q^n}^{\times}$. 

\begin{lem} \label{lem2727CFFR.100-F}\hypertarget{lem2727CFFR.100-F} If $q=p^k$ is a prime power and $\alpha\in \F_{q^n}$ is an invertible element in the cyclic group $\F_{q^n}^{\times}$, then
	\begin{equation}\label{eq2727CFFR.100d}
		\Psi (\alpha)=\frac{\varphi (q^n-1)}{q^n-1}\sum _{d \mid q^n-1} \frac{\mu (d)}{\varphi (d)}\sum _{\ord(\chi ) = d} \chi (\alpha)=
		\left \{\begin{array}{ll}
			1 & \text{ if } \ord_q (\alpha)=q^n-1,  \\
			0 & \text{ if } \ord_q (\alpha)\neq q^n-1. \\
		\end{array} \right .
	\end{equation}
\end{lem}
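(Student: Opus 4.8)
The plan is to abbreviate $N=q^n-1$, exploit the cyclic structure $\F_{q^n}^{\times}\cong\Z/N\Z$, and recognize the inner sums as Ramanujan sums. Since $\mu(d)/\varphi(d)$ is constant on the characters of a fixed order $d$, write $f_d(\alpha)=\sum_{\ord(\chi)=d}\chi(\alpha)$, so that $\Psi(\alpha)=\tfrac{\varphi(N)}{N}\sum_{d\mid N}\tfrac{\mu(d)}{\varphi(d)}f_d(\alpha)$. First I would compute the subgroup sum $S(e):=\sum_{\ord(\chi)\mid e}\chi(\alpha)$ for each $e\mid N$: the characters of order dividing $e$ form a subgroup of order $e$ in the character group of $\F_{q^n}^{\times}$, and its annihilator in $\F_{q^n}^{\times}$ is exactly the subgroup of $e$th powers; hence by orthogonality $S(e)=e$ when $\alpha$ is an $e$th power --- equivalently when $e\mid N/\ord_q(\alpha)$ --- and $S(e)=0$ otherwise. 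Möbius inversion over the divisor lattice then yields $f_d(\alpha)=\sum_{e\mid d}\mu(d/e)S(e)=\sum_{e\mid\gcd(d,\,N/\ord_q(\alpha))}\mu(d/e)\,e$, a Ramanujan sum.

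For the primitive case $\ord_q(\alpha)=N$, only $e=1$ contributes, so $f_d(\alpha)=\mu(d)$ and $\Psi(\alpha)=\tfrac{\varphi(N)}{N}\sum_{d\mid N}\tfrac{\mu(d)^2}{\varphi(d)}=\tfrac{\varphi(N)}{N}\cdot\tfrac{N}{\varphi(N)}=1$, where the closing evaluation is the additive inverse identity, \hyperlink{lem5005.100i}{Lemma} \ref{lem5005.100i}(i). For the non-primitive case put $m=\ord_q(\alpha)<N$ and let $Q$ be the nonempty set of primes dividing $N/m$. Since $\mu(d)=0$ unless $d$ is squarefree, only squarefree $d\mid N$ survive, and for such $d$ the Ramanujan sum evaluates to $f_d(\alpha)=\mu(d/\delta_d)\,\varphi(d)/\varphi(d/\delta_d)$ with $\delta_d=\gcd(d,N/m)$. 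Substituting into $\Psi(\alpha)$ and grouping the squarefree divisors $d$ by their prime support, the sum factors as a product over the primes $\ell\mid N$; every $\ell\in Q$ contributes a local factor $1+(-1)=0$, so $\Psi(\alpha)=0$.

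I expect the bookkeeping in the non-primitive case to be the main obstacle: one must identify $f_d(\alpha)$ correctly and check that $\Psi(\alpha)$ is multiplicative over the prime support of $N$, so that a single vanishing local factor annihilates the whole sum. Everything else --- the orthogonality computation of $S(e)$, the Möbius inversion, and the evaluation $\sum_{d\mid N}\mu^2(d)/\varphi(d)=N/\varphi(N)$ --- is routine given the identities already assembled in the paper. As a fallback for the non-primitive case, one can instead fix a single prime $\ell\mid N/m$, write $\alpha=\beta^{\ell}$, partition the characters according to whether $\ell\mid\ord(\chi)$, and verify directly that the two blocks cancel; I would keep this route in reserve if the prime-by-prime factorization becomes unwieldy.
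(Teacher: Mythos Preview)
Your argument is correct. The evaluation of $S(e)$ via orthogonality, the M\"obius inversion giving $f_d(\alpha)$ as a Ramanujan sum, the primitive case via \hyperlink{lem5005.100i}{Lemma}~\ref{lem5005.100i}(i), and the vanishing in the non-primitive case by multiplicativity of the Ramanujan sum over the prime support of $N$ all go through as you describe.

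The paper's proof reaches the same endpoint but by a more direct route: it passes immediately to the Euler-product form
\[
\Psi(\alpha)=\frac{\varphi(N)}{N}\prod_{r\mid N}\Bigl(1-\frac{1}{r-1}\sum_{\ord(\chi)=r}\chi(\alpha)\Bigr),
\]
the product taken over primes $r\mid N$, and then evaluates each local factor by a bare-hands computation of $\sum_{\ord(\chi)=r}\chi(\alpha)$ according to whether $\alpha$ is an $r$th power. Your approach, by contrast, stays in the additive (divisor-sum) form, identifies the inner sums as Ramanujan sums $c_d(N/m)$, invokes H\"older's evaluation, and only then factors over primes. What the paper's route buys is brevity and no appeal to Ramanujan-sum formulas; what yours buys is a cleaner separation of steps and a uniform treatment of all squarefree $d$ at once, rather than the paper's somewhat ad hoc case analysis at each prime. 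Your fallback (fix a prime $\ell\mid N/m$, write $\alpha=\beta^\ell$, and cancel the two blocks of characters) is in fact precisely the paper's mechanism, so both paths converge on the same local-factor cancellation.
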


\begin{proof}[\textbf{Proof}] Assume that $u=\tau^{rm}$ is a $r$th power residue in $\F_{q^n}$, where $r\mid q^n-1$ and $\gcd(m,q^n-1)=1$. Then, the inner sum
	\begin{equation} 
		\sum _{ \ord(\chi) = r} \chi (u)= \sum _{ \ord(\chi) = r} \chi (\tau^{rm})=\sum _{ \ord(\chi) = r} \chi (\tau^{m})^r=\varphi(r)=r-1,
	\end{equation}
	where $\chi(v)^r=1$ for any $v\in \F_{q^n}^{\times}$. Replacing this information into the product
	\begin{eqnarray} 
		\frac{\phi(q^n-1)}{q^n-1} \sum_{d \mid q^n-1}\frac{\mu(d)}{\varphi(d)} \sum_{\ord(\chi)=d}\chi(u)
		&=&\frac{\phi(q^n-1)}{q^n-1} \prod_{r \mid q^n-1} \left (1- \frac{\sum_{\ord(\chi)=r}\chi(u)}{r-1} \right )  \nonumber \\
		&=&\frac{\phi(q^n-1)}{q^n-1} \prod_{r \mid q^n-1} \left (1- \frac{r-1}{r-1} \right )=0,
	\end{eqnarray}
where $r$ varies over the primes, shows that both sides of the equation vanish if the element $u \in \F_{q^n}^{\times}$ has order $\ord_q(u) =r \mid q^n-1$ and $r < q^n-1$. Now, assume that $u=\tau^{m}$ is not $r$th power residue in $\F_{q^n}$ for any $r \mid q^n-1$, where $\gcd(m,q^n-1)=1$. Then, the inner sum
	\begin{equation} 
		\sum _{ \ord(\psi) = r} \chi (u)= \sum _{ \ord(\psi) = r} \chi (\tau^{m})=-1.
	\end{equation}
	Replacing this information into the product
	\begin{eqnarray} 
		\frac{\phi(q^n-1)}{q^n-1} \sum_{d \mid q^n-1}\frac{\mu(d)}{\varphi(d)} \sum_{\ord(\chi)=d}\chi(u)
		&=&\frac{\phi(q^n-1)}{q^n-1} \prod_{r \mid q^n-1} \left (1- \frac{\sum_{\ord(\chi)=r}\chi(u)}{r-1} \right )  \nonumber \\
		&=&\frac{\phi(q^n-1)}{q^n-1} \prod_{r \mid q^n-1} \left (1- \frac{-1}{r-1} \right )=1 .
	\end{eqnarray}
	These verify that both sides of the equation vanishes if and only if the element $u \in \F_{q^n}^{\times}$ has order $\ord_q(u) =r \mid q^n-1$ and $q < q^n-1$.
\end{proof}

There are a few other variant proofs of this result, these are widely available in the literature, the proof given in {\color{red}\cite[p.\;221 ]{LS1987}} has an error. Almost every result in the theory of primitive roots in finite fields is based on this characteristic function, but sometimes written in different forms. \\

The authors in \cite{DH1937}, \cite{WR2001} attribute the simplest case for prime finite fields $\F_{p}$ of this formula to Vinogradov, \cite{VI1927}, and other authors attribute this formula to Landau, \cite{LE1927}. The proof and other details on the characteristic function are given in {\color{red}\cite[p. 863]{ES1957}}, {\color{red}\cite[p.\ 258]{LN1997}}, {\color{red}\cite[p.\ 18]{MP2007}}.

\subsection{Divisorfree Characteristic Functions for Primitive Elements}
\label{S7171CFPE-B} \hypertarget{S7171CFPE-B}
Let \(\tau\) be a primitive root in $\F_{q^n}$, let $\log_{\tau}\alpha$ be the discrete logarithm with respect to $\tau$, and $s\in \mathcal{S}=\{s<q^n:\gcd(s,q^n-1)=1\}$. The discrete logarithm is defined by the map
\begin{align}\label{eq7171CFNE.150PDLM}
	\F_{q^n}^{\times}&\longrightarrow \left(  \Z(q^n-1)/\Z\right)^{\times}\\[.3cm]
	\alpha &\longrightarrow \log_{\tau}\alpha.
\end{align}
A new \textit{divisors-free} representation of the characteristic function of primitive element is introduced in this section. It detects the order $\ord_{q^n} \alpha$
of the element $\alpha \in \F_{q^n}$ by means of the solutions of the equation \begin{equation}\label{eq7171CFNE.150PDF0}
	s-\log_{\tau}\alpha=0.
\end{equation}  

\begin{lem} \label{lem7171CFPE.150PDF} \hypertarget{lem7171CFNE.150PDF}  Let $q=p^k$ be a prime power, let \(\tau\) be a primitive root in $\F_{q^n}$ and let \(\psi \neq 1\) be a nonprincipal additive character of order $\ord  \psi =q^n$. If $\alpha \in \F_{q^n}$ is a nonzero element, then
	\begin{eqnarray}\label{eq7171CFPE.150PDFd}
		\Psi (\alpha)&=&\sum _{\substack{1\leq s\leq q^n-1\\\gcd (s,q^n-1)=1}} \frac{1}{q^n}\sum _{0\leq t\leq q^n-1} \psi \left ((s-\log_{\tau}\alpha)t\right)\\[.2cm]
		&=&\left \{
		\begin{array}{ll}
			1 & \text{   \normalfont if } \ord_{q^n} (\alpha)=q^n-1,  \\[.2cm]
			0 & \text{   \normalfont if } \ord_{q^n} (\alpha)\ne q^n-1. \\
		\end{array} \right .\nonumber
	\end{eqnarray}
\end{lem}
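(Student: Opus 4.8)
The plan is to verify the indicator identity by evaluating the inner sum over $t$ first and then analyzing the resulting sum over $s$. Fix a primitive root $\tau \in \F_{q^n}$ and write $\ell = \log_\tau \alpha \in \Z/(q^n-1)\Z$ for the discrete logarithm of the nonzero element $\alpha$. Since $\psi$ has additive order $q^n$, it is a faithful character of the cyclic group $\Z/q^n\Z$, so the orthogonality relation gives
\begin{equation}\label{eq-proposal-orth}
\frac{1}{q^n}\sum_{0\leq t\leq q^n-1}\psi\big((s-\ell)t\big)=
\begin{cases}
1 & \text{if } s\equiv \ell \pmod{q^n},\\
0 & \text{otherwise.}
\end{cases}
\end{equation}
Here one must be slightly careful: $s$ runs over residues mod $q^n-1$ with $\gcd(s,q^n-1)=1$, whereas the character $\psi$ lives on $\Z/q^n\Z$; I would fix a canonical set of representatives $1\leq s\leq q^n-1$ and $0\leq t\leq q^n-1$, so that \eqref{eq7171CFNE.150PDF0}, namely $s-\ell=0$, is an honest equation of integers in the range $[0,q^n-1]$, and \eqref{eq-proposal-orth} is the usual finite Fourier orthogonality on $\Z/q^n\Z$.

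Substituting \eqref{eq-proposal-orth} into the definition of $\Psi(\alpha)$ collapses the double sum to
\begin{equation}\label{eq-proposal-collapse}
\Psi(\alpha)=\#\big\{\, s : 1\leq s\leq q^n-1,\ \gcd(s,q^n-1)=1,\ s=\ell \,\big\},
\end{equation}
which is either $0$ or $1$: it equals $1$ precisely when the representative $\ell$ of $\log_\tau\alpha$ is itself coprime to $q^n-1$, and $0$ otherwise. So it remains to observe that $\gcd(\log_\tau\alpha,\,q^n-1)=1$ is equivalent to $\ord_{q^n}(\alpha)=q^n-1$. This is the standard fact that $\tau^\ell$ is a primitive root if and only if $\gcd(\ell,q^n-1)=1$, since $\ord_{q^n}(\tau^\ell)=(q^n-1)/\gcd(\ell,q^n-1)$; I would cite Definition \ref{dfn2727PED.200P} and the structure of the cyclic group $\F_{q^n}^\times$. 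Combining this with \eqref{eq-proposal-collapse} yields the claimed two-valued formula.

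The main obstacle is not deep but bookkeeping: one must reconcile the two moduli $q^n$ (the order of $\psi$, and the range of the averaging variable $t$) and $q^n-1$ (the modulus governing the discrete logarithm and the coprimality condition on $s$). The cleanest way is to treat $s$ and $\ell$ as integers in $[0,q^n-1]$ so that "$s-\ell=0$" means literal equality, at which point the $q^n$-orthogonality in \eqref{eq-proposal-orth} applies verbatim and no genuine collision modulo $q^n$ versus modulo $q^n-1$ can occur. Once that is set up, the argument is a one-line Fourier inversion followed by the elementary order computation; the only thing to be careful about is that $\alpha\neq 0$ is needed for $\log_\tau\alpha$ to be defined, which is exactly the hypothesis imposed in the statement.
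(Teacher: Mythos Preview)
Your proposal is correct and follows essentially the same approach as the paper: evaluate the inner sum over $t$ by orthogonality (the paper phrases this via the geometric series formula), reducing $\Psi(\alpha)$ to the indicator of the event that $\log_\tau\alpha$ lies in $\{s:\gcd(s,q^n-1)=1\}$, which is equivalent to $\alpha$ being primitive. Your treatment of the $q^n$ versus $q^n-1$ modulus issue is in fact more careful than the paper's, which simply fixes representatives implicitly.
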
	

\begin{proof} Set the additive character $\psi(t) =e^{i 2\pi at/q^n}\in \C$, where $a\ne0$ is an integer. As the index $s\in \mathcal{S}=\{s<q^n:\gcd(s,q^n-1)=1\}$ ranges over the integers relatively prime to $q^n-1$, the element $\tau ^s\in \F_{q^n}^{\times}$ ranges over the primitive roots in $\F_{q^n}$. Accordingly, the equation \eqref{eq7171CFNE.150PDF0} has a unique solution $s\in \mathcal{S}$ if and only if the fixed element $\alpha \in \F_{q^n}$ is a primitive root. This implies that the inner sum in \eqref{eq7171CFPE.150PDFd} collapses to 
	\begin{equation}\label{eq7171CFNE.150PDF2}
		\sum _{0\leq t\leq q^n-1} e^{\frac{i 2\pi(s-\log_{\tau}\alpha)t}{q^n}}=q^n.
	\end{equation}
	This in turns reduces \eqref{eq7171CFPE.150PDFd} to
	\begin{equation}\label{eq7171CFPE.150PDF7}
		\sum _{\substack{1\leq s\leq q^n-1\\\gcd (s,q^n-1)=1}} \frac{1}{q^n}\sum _{0\leq t\leq q^n-1} e^{\frac{i 2\pi(s-\log_{\tau}\alpha)t}{q^n}}=1.
	\end{equation} 	
	
	Otherwise, if the element $\alpha \in \F_{q^n}$ is not a primitive root, then the equation \eqref{eq7171CFNE.150PDF0} has no solution $s\in \mathcal{S}$. Thus, the inner sum in \eqref{eq7171CFPE.150PDFd} collapses to 
	\begin{equation}\label{eq7171CFPE.150PDF3}
		\sum _{0\leq t\leq q^n-1} e^{\frac{i 2\pi(s-\log_{\tau}\alpha)t}{q^n}}=0,
	\end{equation}
	this follows from the geometric series formula $\sum_{0\leq d\leq  x-1} r^d =(r^x-1)/(r-1)$, where $r=e^{i 2\pi a/x}\ne1$ and $x=q^n$. This in turns forces \eqref{eq7171CFPE.150PDFd} to vanish.
\end{proof}

Other versions of the divisorfree characteristic function for primitive elements are possible.
\section{Characteristic Functions for Normal Elements} \label{S7171CFNE-N}\hypertarget{S7171CFNE-N}
A normal element generates the $\F_q$-module

\begin{equation}\label{eq7171CFNE.150a}
	\F_{q^n}\cong\F_{q}[x]/(x^n-1)\F_{q}.
\end{equation}	
The group of units $\left( \F_{q}[x]/(x^n-1)\F_{q}\right)^{\times}$ has precisely $\Phi(x^n-1)$ units, and each unit is a normal element. There several possible techniques that can be used to construct characteristic functions of normal elements in the group of units $\left( \F_{q}[x]/(x^n-1)\F_{q}\right)^{\times}$. The standard characteristic function for normal elements dependents on the factorization of the polynomial $x^n-1$. Whereas a new divisor-free characteristic function for normal elements introduced here is not dependent on the factorization of the polynomial $x^n-1$. The basic analytic principles of these indicator functions are presented below. 
\subsection{Divisors Dependent Characteristic Functions for Normal Elements} \label{S7171CFNE-C}\hypertarget{S7171CFNE-C}
A representation of the characteristic function of normal elements $\eta\in \F_q[x]/f(x)$ in finite rings is outline here. This representation is sensitive to the irreducible decompositions $x^n-1=p_1(x)^{e_1}p_2(x)^{e_2}\cdots p_k(x)^{e_k}$, with $p_i(x)\in\F_q[x]$ irreducible and $e_i\geq1$. 

\begin{lem} \label{lem7171CFNE.150b}\hypertarget{lem7171CFNE.150b}
	If \(\alpha\in \F_{q^n}\) be a nonzero element, then
	\begin{align}\label{eq7171CFNE.150d}
		\Psi_q (\alpha)&=\frac{\Phi_q (x^n-1)}{q^n}\sum _{d(x) \mid x^n-1} \frac{\mu_q (d(x))}{\Phi_q (d(x))}\sum _{\Ord(\psi ) = d(x)} \psi (\alpha)\\[.2cm]
		&=
		\left \{\begin{array}{ll}
			1 & \text{  \normalfont if } \Ord_q (\alpha)=x^n-1,  \\[.2cm]
			0 & \text{  \normalfont if } \Ord_q (\alpha)\neq x^n-1. \\
		\end{array} \right .\nonumber 
	\end{align}
\end{lem}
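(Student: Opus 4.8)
The plan is to mirror the proof of Lemma \ref{lem2727CFFR.100-F} for primitive elements, transporting everything to the additive $\F_q[x]$-module $M=\F_q[x]/(x^n-1)$ via the isomorphism \eqref{eq7171CFNE.150a}. Under this isomorphism a normal element of $\F_{q^n}$ corresponds to a generator of $M$ as an $\F_q[x]$-module, equivalently a unit of the ring $M$, so the number of normal elements is $\Phi_q(x^n-1)$. First I would record, using the self-duality of $M$ together with Lemma \ref{lem7171CFNE.200A}, the following character bookkeeping: for every divisor $d(x)\mid x^n-1$ the additive characters of order dividing $d(x)$ are exactly those that are trivial on $d(x)\circ M=\{d(x)\circ m:m\in M\}$, they form the group $\widehat{M/d(x)M}$ of cardinality $q^{\deg d}$, and $d(x)\circ M=\{\,m\in M:((x^n-1)/d(x))\circ m=0\,\}$. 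Combined with Definition \ref{dfn2727PNT.200C}, this gives: $\alpha\in p(x)\circ M$ precisely when $((x^n-1)/p(x))\circ\alpha=0$, i.e. when the $p(x)$-adic valuation of $\Ord_q(\alpha)$ is strictly less than that of $x^n-1$.

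Since $\mu_q$ is supported on squarefree divisors and $d(x)\mapsto \mu_q(d(x))/\Phi_q(d(x))$ is multiplicative, the next step is to convert the double sum in \eqref{eq7171CFNE.150d} into an Euler product over the irreducible factors $p(x)\mid x^n-1$. This needs the auxiliary fact that $d(x)\mapsto \sum_{\Ord(\psi)=d(x)}\psi(\alpha)$ is multiplicative on squarefree divisors; I would derive it from the Chinese Remainder decomposition $M/d(x)M\cong\prod_{p(x)\mid d(x)}M/p(x)M$ for squarefree $d(x)$, which is compatible with the grading of characters by additive order, so that $\sum_{\Ord(\psi)=d(x)}\psi(\alpha)=\prod_{p(x)\mid d(x)}\sum_{\Ord(\psi)=p(x)}\psi(\alpha)$.

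Then I would evaluate the prime-case sum. For an irreducible $p(x)\mid x^n-1$, orthogonality of characters on $M/p(x)M$ gives
\[
\sum_{\Ord(\psi)=p(x)}\psi(\alpha)=\left(\sum_{\Ord(\psi)\mid p(x)}\psi(\alpha)\right)-1=
\begin{cases}
N(p)-1 & \text{if }((x^n-1)/p(x))\circ\alpha=0,\\[2pt]
-1 & \text{otherwise,}
\end{cases}
\]
where $N(p)=q^{\deg p}$. Substituting this, together with $\mu_q(p(x))=-1$ and $\Phi_q(p(x))=N(p)-1$, each Euler factor $1+\tfrac{\mu_q(p)}{\Phi_q(p)}\sum_{\Ord(\psi)=p}\psi(\alpha)$ equals $0$ when $v_{p}(\Ord_q\alpha)<v_{p}(x^n-1)$ and equals $N(p)/(N(p)-1)$ otherwise.

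Finally the assembly: the product over $p(x)\mid x^n-1$ vanishes unless $v_{p}(\Ord_q\alpha)=v_{p}(x^n-1)$ for every irreducible $p(x)\mid x^n-1$, i.e. unless $\Ord_q(\alpha)=x^n-1$ — which is exactly the condition that $\alpha$ is a normal element. When $\alpha$ is normal the product equals $\prod_{p(x)\mid x^n-1}(1-1/N(p))^{-1}=q^n/\Phi_q(x^n-1)$ by the multiplicative identity in Lemma \ref{lem7171TFDD.200f}, which cancels the prefactor $\Phi_q(x^n-1)/q^n$ and yields $\Psi_q(\alpha)=1$; in all other cases $\Psi_q(\alpha)=0$. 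I expect the main obstacle to be the order bookkeeping in the first two steps: rigorously verifying that the self-duality and the Chinese Remainder isomorphism genuinely respect the grading of characters by additive order, and handling the case in which the characteristic of $\F_q$ divides $n$, where $x^n-1$ is not squarefree, so that $p(x)\circ M$ must be identified with the submodule annihilated by $(x^n-1)/p(x)$ through valuations rather than through divisibility of radicals.
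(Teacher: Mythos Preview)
Your approach is essentially the paper's: rewrite the sum as an Euler product over the irreducible factors $r(x)\mid x^n-1$, evaluate the inner character sum at each prime, and read off the dichotomy. The paper's proof does exactly this, stating the product form \eqref{eq7171CFNE.150f} and the prime evaluation \eqref{eq7171CFNE.150i}, then concluding.

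Where your write-up differs is in precision, and the differences are in your favor. First, you actually justify why $d(x)\mapsto\sum_{\Ord(\psi)=d(x)}\psi(\alpha)$ is multiplicative on squarefree divisors via CRT on $M/d(x)M$; the paper silently assumes this when passing to the product. Second, and more importantly, your condition for the prime sum to equal $N(p)-1$ is the correct one: it happens when $\alpha\in p(x)\circ M$, equivalently $v_p(\Ord_q\alpha)<v_p(x^n-1)$. The paper's stated condition in \eqref{eq7171CFNE.150i}, ``$\Ord_q(\alpha)\equiv 0\bmod r(x)$'', reads as $r(x)\mid\Ord_q(\alpha)$, which is the opposite; taken literally it would make every Euler factor vanish precisely when $\alpha$ \emph{is} normal. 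Your formulation via valuations also cleanly handles the case $p\mid n$ where $x^n-1$ is not squarefree, a subtlety the paper does not address. So your proposal is the same strategy, executed with the bookkeeping done right.
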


\begin{proof}[\textbf{Proof}]Since the arithmetic functions $\mu_q (f(x))$ and $\Phi_q (f(x))$ are multiplicative, consider the product version
	\begin{equation}\label{eq7171CFNE.150f}
	\Psi_q (\alpha)=\frac{\Phi_q (x^n-1)}{q^n}\prod _{r(x) \mid x^n-1}\left (1- \frac{1}{q^{\deg r}-1}\sum _{\Ord(\psi ) = r(x)} \psi (\alpha)\right ),
\end{equation}	
where $r(x)\mid x^n-1$ varies over the irreducible factors of degree $\deg r=d$. Here, the exponential sum over all the additive character of order $\Ord(\psi ) = r(x)$ reduces to
\begin{equation}\label{eq7171CFNE.150i}
\sum _{\Ord(\psi ) = r(x)} \psi (\alpha)
	=
	\left \{\begin{array}{ll}
		q^{\deg r}-1 & \text{ if } \Ord_q (\alpha)\equiv 0\bmod r(x),  \\[.2cm]
		-1& \text{ if } \Ord_q (\alpha)\not\equiv 0\bmod r(x), \\
	\end{array} \right.  
\end{equation}	
see \hyperlink{exe7575CFNE.082}{Exercise} \ref{exe7575CFNE.082}. Now, for any nonnormal element $\alpha \in\F_{q^n}$ of additive order $\Ord_q(\alpha)=d(x)\mid x^n-1$ such that $d(x)\ne x^n-1$, there is an additive character $\psi_(\alpha)$ of order $\Ord_q(\psi_q)=r(x)\mid d(x)$, where $r(x)\in \F_q[x]$ is irreducible. This leads to
	\begin{align}\label{eq7171CFNE.150j}
\Psi_q (\alpha)&=\frac{\Phi_q (x^n-1)}{q^n}\prod _{r(x) \mid x^n-1}\left (1- \frac{1}{q^{\deg r}-1}\sum _{\Ord(\psi ) = r(x)} \psi (\alpha)\right )\\[.2cm]
&=	\left \{\begin{array}{ll}
		1 & \text{  \normalfont if } \Ord_q (\alpha)=x^n-1,  \\[.2cm]
		0 & \text{  \normalfont if } \Ord_q (\alpha)\neq x^n-1 \\
	\end{array} \right .\nonumber 
\end{align}
as claimed, see \hyperlink{exe7171CFNE.012}{Exercise} \ref{exe7171CFNE.012}.
\end{proof}

The earliest development of this indicator function seems to be {\color{red}\cite[Theorem 11]{OO1934}} and {\color{red}\cite[Lemma 4]{CL1952A}}.
\subsection{Divisorfree Characteristic Functions for Normal Elements} 
A new divisor-free representation of the characteristic function of normal elements $\eta\in \F_q[x]/f(x)$, where $f(x)\in\F_{q}[x]$ is a polynomial of degree $\deg f=n$, in finite rings is outline here. This representation is not sensitive to the irreducible decompositions $x^n-1=p_1(x)^{e_1}p_2(x)^{e_2}\cdots p_k(x)^{e_k}$, with $p_i(x)\in\F_q[x]$ irreducible and $e_i\geq1$. The result is expressed in terms of the discrete logarithm, defined in \eqref{eq7171CFNE.150PDLM} in \hyperlink{S7171CFPE-P}{Section } \ref{S7171CFPE-P} and a nontrivial additive character $\psi(t)=e^{i2\pi at/q^n}$ with $a\ne0$, see \hyperlink{S7171CFNE-DNE}{Section } \ref{S7171CFNE-DNE}.

\begin{lem} \label{lem7171CFNE.150NDF}\hypertarget{lem7171CFNE.150NDF} Let $q=p^k$ be a prime power and let $\eta \in \F_{q^n}$ be a normal element. If \(\alpha\in \F_{q^n}\) is a nonzero element, then
	\begin{align}\label{eq7171CFNE.150C}
		\Psi_q (\alpha)&=\sum _{\substack{\deg s(x)\leq n-1\\\gcd (s(x),x^n-1)=1}} \frac{1}{q^n}\sum _{0\leq t\leq q^n-1} e^{ \frac{i2\pi(\log_{\tau}s(x)\circ\eta-\log_{\tau}\alpha)t}{q^n}}\\[.2cm]
		&=
		\left \{\begin{array}{ll}
			1& \text{  \normalfont if } \Ord_q (\alpha)=x^n-1,  \\[.2cm]
			0 & \text{  \normalfont if } \Ord_q (\alpha)\neq x^n-1. \\
		\end{array} \right .\nonumber 
	\end{align}
	
\end{lem}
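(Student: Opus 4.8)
The plan is to adapt the argument behind the divisor-free characteristic function for primitive elements (Lemma~\ref{lem7171CFPE.150PDF}), transporting it from the cyclic group $\F_{q^n}^{\times}$ to the $\F_q[x]$-module structure $\F_{q^n}\cong\mathcal{R}=\F_q[x]/(x^n-1)$ of \eqref{eq7171CFNE.150a}, in which $x$ acts as the Frobenius $\sigma$. The first step is to exploit that $\eta$ is normal: the map $s(x)\longmapsto s(x)\circ\eta=L_s(\eta)$ is an isomorphism of $\F_q[x]$-modules from $\mathcal{R}$ onto $\F_{q^n}$. Under it, for $s(x)$ reduced modulo $x^n-1$ one has $\Ord_q\big(s(x)\circ\eta\big)=(x^n-1)/\gcd\!\big(s(x),x^n-1\big)$, since $r(x)\circ\big(s(x)\circ\eta\big)=(r(x)s(x))\circ\eta=0$ exactly when $x^n-1\mid r(x)s(x)$. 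Hence $s(x)\circ\eta$ is a normal element precisely when $s(x)$ is a unit of $\mathcal{R}$, i.e. $\deg s\le n-1$ and $\gcd(s(x),x^n-1)=1$; and as $s(x)$ runs over the $\Phi_q(x^n-1)$ units of $\mathcal{R}$, the element $s(x)\circ\eta$ runs exactly once over the set $\mathcal{N}$ of normal elements of $\F_{q^n}$, of cardinality $\Phi(x^n-1)$ by Lemma~\ref{lem1771TFDD.300FF}.

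With this reparametrization the double sum defining $\Psi_q(\alpha)$ becomes
\[
\Psi_q(\alpha)=\sum_{\beta\in\mathcal{N}}\frac{1}{q^n}\sum_{0\le t\le q^n-1}e^{\frac{i2\pi a(\log_{\tau}\beta-\log_{\tau}\alpha)t}{q^n}},
\]
where $a\ne0$ with $p\nmid a$ is the integer attached to the additive character $\psi(t)=e^{i2\pi at/q^n}$ of full order $q^n$, and $\log_{\tau}$ denotes the discrete logarithm with values in $\{0,1,\dots,q^n-2\}$ (both $\alpha$ and $\beta$ are nonzero, so the logarithms are defined). The next step is to evaluate the inner geometric sum: setting $\zeta=e^{i2\pi a(\log_{\tau}\beta-\log_{\tau}\alpha)/q^n}$, it equals $q^n$ when $\zeta=1$ and $(\zeta^{q^n}-1)/(\zeta-1)=0$ otherwise, since $\zeta^{q^n}=1$. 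Because $\gcd(a,q^n)=1$, the condition $\zeta=1$ is equivalent to $\log_{\tau}\beta\equiv\log_{\tau}\alpha\pmod{q^n}$, and as both logarithms lie in $\{0,\dots,q^n-2\}$ this forces $\log_{\tau}\beta=\log_{\tau}\alpha$, that is $\beta=\alpha$.

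Combining the two steps gives $\Psi_q(\alpha)=\sum_{\beta\in\mathcal{N}}\mathbf{1}_{\{\beta=\alpha\}}$, which equals $1$ when $\alpha$ is normal (equivalently $\Ord_q(\alpha)=x^n-1$) and $0$ otherwise, as claimed. The point needing the most care is the first step — the precise verification that $L_s$ carries the group of units of $\mathcal{R}$ bijectively onto $\mathcal{N}$; this uses only the module isomorphism and the additive-order characterization (Definition~\ref{dfn2727PNT.200C}, Lemma~\ref{lem2727PNT-I}) and no factorization of $x^n-1$, which is exactly why this characteristic function is divisor-free, in contrast with Lemma~\ref{lem7171CFNE.150b}. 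A secondary but essential observation is that $\psi$ must have full order $q^n$, i.e. $p\nmid a$: if $p\mid a$, then $\zeta=1$ would only force $\log_{\tau}\beta\equiv\log_{\tau}\alpha$ modulo the proper divisor $q^n/\gcd(a,q^n)$ of $q^n$, which need not separate $\beta$ from $\alpha$, and the identity would fail.
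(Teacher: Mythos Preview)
Your proof is correct and follows essentially the same approach as the paper's: both arguments rest on the fact that, since $\eta$ is normal, the map $s(x)\mapsto s(x)\circ\eta$ carries the units of $\F_q[x]/(x^n-1)$ bijectively onto the set of normal elements, after which the inner geometric sum over $t$ detects whether $\alpha$ coincides with one of these $s(x)\circ\eta$. Your write-up is in fact more careful than the paper's on two points---you explicitly verify the additive-order formula $\Ord_q(s(x)\circ\eta)=(x^n-1)/\gcd(s(x),x^n-1)$ rather than merely citing the definition, and you flag the requirement $p\nmid a$ on the character---but the underlying argument is the same.
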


\begin{proof} Fix a normal element $\eta \in \F_{q^n}$ and let $\alpha \in \F_{q^n}$ be an arbitrary element. As  $s(x)=a_0+a_1x+a_2x^2+\cdots+a_{n-1}x^{n-1}\in\F_q[x]$ varies over the set polynomials of degree $\deg s<n$, the linear functional 
	\begin{equation}\label{eq7171CFNE.150D}
		s(x)\longrightarrow s(x)\circ\eta=a_0x+a_1x^q+a_2x^{q^2}+\cdots+a_{n-1}x^{q^{n-1}}
	\end{equation} generates the finite field $\F_{q^n}\cong\F_q[x]/f(x)$, see \hyperlink{S7171CFNE.200m}{Definition} \ref{dfn7171CFNE.200m}. Specifically, the element $s(x)\circ\eta$ is a normal element if and only if $\gcd(s(x),x^n-1)=1$, see \hyperlink{dfn2727PNT.200C}{Definition} \ref{dfn2727PNT.200C}. Accordingly, it follows that the equation 
	\begin{equation}\label{eq7171CFNE.150F}
		\log_{\tau}s(x)\circ\eta-\log_{\tau}\alpha=0
	\end{equation}
	has a unique solution $s(x)\in\F_q[x]$ if and only if $\alpha \in\F_{q^n}$ is a normal element. This implies that the inner sum in \eqref{eq7171CFNE.150C} collapses to 
	\begin{equation}\label{eq7171CFNE.150I}
		\sum _{0\leq t\leq q^n-1} e^{ \frac{i2\pi(\log_{\tau}s(x)\circ\eta-\log_{\tau}\alpha)t}{q^n}}=\sum _{0\leq t\leq q^n-1}1=q^n.
	\end{equation}
	This in turns reduces \eqref{eq7171CFNE.150C} to
	\begin{equation}\label{eq7171CFPE.150PDF9}
		\sum _{\substack{\deg s(x)\leq n-1\\\gcd (s(x),x^n-1)=1}} \frac{1}{q^n}\sum _{0\leq t\leq q^2-1} e^{ \frac{i2\pi(\log_{\tau}s(x)\circ\eta-\log_{\tau}\alpha)t}{q^n}}=1.
	\end{equation}

	Otherwise, the equation \eqref{eq7171CFNE.150F} has no solution $s(x)\in \mathcal{S}[x]=\{s(x)\in\F_q[x]:\gcd(s(x),x^n-1)=1 \text{ and }\deg s<n\}$.  Thus,  the inner sum in \eqref{eq7171CFNE.150C} collapses to 
	\begin{equation}\label{eq7171CFPE.150PDF5}
		\sum _{0\leq t\leq q^n-1} e^{ \frac{i2\pi(\log_{\tau}s(x)\circ\eta-\log_{\tau}\alpha)t}{q^n}}=0,
	\end{equation}
	this follows from the geometric series formula $\sum_{0\leq n\leq  x-1} r^n =(r^x-1)/(r-1)$, where $r=e^{i 2\pi a/p}\ne1$ and $x=q^n$. This in turns forces \eqref{eq7171CFNE.150C} to vanish.
	vanishes.
\end{proof}

Other versions of the divisorfree characteristic function for normal elements are possible.

\section{Intermediate Estimates and Evaluations} \label{S47171SNEFR-C}\hypertarget{S47171SNEFR-C}
The proof of \hyperlink{thm4343PNEFF.050}{Theorem} \ref{thm4343PNEFF.050} is broken up into four subsums. The estimates and evaluations of these four subsums are provided in  
\hyperlink{lem7171SNEFR.450-CaseI} {Lemma} \ref{lem7171SNEFR.450-CaseI} to \hyperlink{lem7171SNEFR.450-CaseIV} {Lemma} \ref{lem7171SNEFR.450-CaseIV} .
\begin{lem} \label{lem7171SNEFR.450-CaseI}\hypertarget{lem7171SNEFR.450-CaseI} Let $\tau\in \F_{q^n}$ be a fixed primitive normal element. If the element $\alpha$ is not a primitive normal element and $t_1=0$ and $t_2=0$, then
	\begin{eqnarray}\label{eq7171SNEFR.450-CaseI2}
		N_{00}(\mathcal{A})
		&=&\sum_{\alpha\in \mathcal{A}}\sum _{\substack{1\leq s\leq q^n-1\\\gcd (s,q^n-1)=1}} \frac{1}{q^n}\sum _{0\leq t_1\leq q^n-1} e^{\frac{i 2\pi(s-\log_{\tau}\alpha)t_1}{q^n}}\\[.3cm]
		&&\hskip 1.5 in \times\sum _{\substack{\deg s(x)\leq n-1\\\gcd (s(x),x^n-1)=1}} \frac{1}{q^n}\sum _{0\leq t_2\leq q^n-1} e^{ \frac{i2\pi(\log_{\tau}s(x)\circ\eta-\log_{\tau}\alpha)t_2}{q^n}}\nonumber\\[.3cm]
		&=&\frac{\varphi (q^n-1)}{q^{n}}\cdot \frac{\Phi (q^n-1)}{q^{n}}\cdot \# \mathcal{A}\nonumber	 .
	\end{eqnarray}	
\end{lem}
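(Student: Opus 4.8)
The plan is to substitute $t_1=0$ and $t_2=0$ directly into the double character sum that defines $N_{00}(\mathcal{A})$ and to evaluate the elementary sums that survive. When $t_1=0$ the factor $e^{i2\pi(s-\log_{\tau}\alpha)t_1/q^n}$ equals $1$ for every admissible $s$, and likewise when $t_2=0$ the factor $e^{i2\pi(\log_{\tau}s(x)\circ\eta-\log_{\tau}\alpha)t_2/q^n}$ equals $1$ for every admissible polynomial $s(x)$. Hence the two inner sums over $t_1$ and over $t_2$ contribute only their $t_i=0$ term, and the whole expression factors as a product of an $s$-sum, an $s(x)$-sum and an $\alpha$-sum, each of the first two carrying a normalizing factor $1/q^n$.

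Next I would count the two reduced sums. The sum $\sum_{1\le s\le q^n-1,\ \gcd(s,q^n-1)=1}1$ equals $\varphi(q^n-1)$ by Definition \ref{dfn5005.100d}. The sum $\sum_{\deg s(x)\le n-1,\ \gcd(s(x),x^n-1)=1}1$ counts the units of the finite ring $\F_q[x]/(x^n-1)$, hence equals the polynomial totient $\Phi(x^n-1)$, the description recorded in Example \ref{exa7171TFDD.200j2} and, for this particular modulus, in Lemma \ref{lem1771TFDD.300FF}. After these substitutions the summand no longer depends on $\alpha$, so the remaining sum over $\alpha\in\mathcal{A}$ simply contributes the factor $\#\mathcal{A}$, which gives
\[
N_{00}(\mathcal{A})=\frac{\varphi(q^n-1)}{q^n}\cdot\frac{\Phi(x^n-1)}{q^n}\cdot\#\mathcal{A},
\]
as claimed. (I read the $\Phi(q^n-1)$ printed in the statement as a typographical slip for the polynomial totient $\Phi(x^n-1)$, consistent with the main term in \eqref{eq4343PNEFF.050i7}.)

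There is essentially no analytic obstacle here: this lemma is the main-term extraction, and the only point deserving a line of care is that the two index sets $\{s:1\le s\le q^n-1,\ \gcd(s,q^n-1)=1\}$ and $\{s(x)\in\F_q[x]:\deg s<n,\ \gcd(s(x),x^n-1)=1\}$ are varied independently, so no cross terms arise and the factorization is legitimate. The fixed primitive normal element $\tau$ and the fixed normal element $\eta$ enter only to make the characteristic functions of Lemma \ref{lem7171CFPE.150PDF} and Lemma \ref{lem7171CFNE.150NDF} well defined; neither affects the value of $N_{00}$. Likewise the hypothesis that $\alpha$ is not primitive normal and that $\mathcal{A}$ is nonstructured play no role in this piece — they will be needed only for the three companion subsums $N_{01}$, $N_{10}$, $N_{11}$, where the nontrivial character sum estimates of Section \ref{S2727CEES-I} and Section \ref{S4545EOFW-W} are brought to bear.
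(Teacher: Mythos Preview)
Your proof is correct and follows essentially the same approach as the paper's own proof: substitute $t_1=0$ and $t_2=0$, reduce the exponentials to $1$, and count the surviving index sets as $\varphi(q^n-1)$, $\Phi(x^n-1)$, and $\#\mathcal{A}$. Your reading of $\Phi(q^n-1)$ as a typographical slip for $\Phi(x^n-1)$ is also correct, and your remarks on which hypotheses are actually used here are accurate and more explicit than the paper's treatment.
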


\begin{proof} Substitute the parameters $t_1=0$ and $t_2=0$ in \eqref{eq7171SNEFR.450-CaseI2} and assume that the element $\alpha\in \F_{q^n}$ is not a primitive normal element. These steps yield
	\begin{eqnarray}\label{7171SNEFR.450-CaseI4}
		N_{00}(\mathcal{A})
		&=&	\sum_{\alpha\in \mathcal{A}}\sum _{\substack{1\leq s\leq q^n-1\\\gcd (s,q^n-1)=1}} \frac{1}{q^n} \;\times\;\sum _{\substack{\deg s(x)\leq n-1\\\gcd (s(x),x^n-1)=1}} \frac{1}{q^n}\\[.3cm]
		&=&\frac{\varphi (q^n-1)}{q^{n}}\cdot \frac{\Phi (q^n-1)}{q^{n}}\cdot \# \mathcal{A}	\nonumber .
	\end{eqnarray}	
\end{proof}

\begin{lem} \label{lem7171SNEFR.450-CaseII}\hypertarget{lem7171SNEFR.450-CaseII} Let $\tau\in \F_{q^n}$ be a fixed primitive normal element. If the element $\alpha$ is not a primitive normal element and $t_1\in [0,p-1]$ and $t_2=0$, then
	\begin{eqnarray}\label{eq7171SNEFR.450-CaseII2}
		N_{01}(\mathcal{A})
		&=&\sum_{\alpha\in \mathcal{A}}\sum _{\substack{1\leq s\leq q^n-1\\\gcd (s,q^n-1)=1}} \frac{1}{q^n}\sum _{0\leq t_1\leq q^n-1} e^{\frac{i 2\pi(s-\log_{\tau}\alpha)t_1}{q^n}}\\[.3cm]
		&&\hskip 1.5 in \times\sum _{\substack{\deg s(x)\leq n-1\\\gcd (s(x),x^n-1)=1}} \frac{1}{q^n}\sum _{0\leq t_2\leq q^n-1} e^{ \frac{i2\pi(\log_{\tau}s(x)\circ\eta-\log_{\tau}\alpha)t_2}{q^n}}\nonumber\\[.3cm]
		&=&0\nonumber.
	\end{eqnarray}	
\end{lem}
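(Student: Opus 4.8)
The plan is to exploit that fixing $t_2=0$ removes the normality constraint altogether, collapsing the second characteristic‑function factor to a constant, and then to evaluate the surviving incomplete exponential sum over $t_1$ by the geometric‑series identity, in parallel with the computation in Lemma~\ref{lem7171SNEFR.450-CaseI} and the proof of Lemma~\ref{lem7171CFPE.150PDF}.

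First I would substitute $t_2=0$ in \eqref{eq7171SNEFR.450-CaseII2}; then the exponential $e^{i2\pi(\cdots)t_2/q^n}$ is trivial, so the sum over $s(x)$ and $t_2$ reduces to the $\alpha$-independent constant $\sum_{\gcd(s(x),x^n-1)=1}q^{-n}=\Phi_q(x^n-1)/q^n$, which by Lemma~\ref{lem1771TFDD.300FF} equals $\Phi(x^n-1)/q^n$. Pulling this factor out gives
\[
N_{01}(\mathcal{A})=\frac{\Phi(x^n-1)}{q^{2n}}\sum_{\alpha\in\mathcal{A}}\;\sum_{\substack{1\le s\le q^n-1\\\gcd(s,q^n-1)=1}}\;\sum_{1\le t_1\le q^n-1}e^{\frac{i2\pi(s-\log_{\tau}\alpha)t_1}{q^n}} .
\]
Next I would interchange the order of summation and evaluate the innermost sum by $\sum_{0\le d\le x-1}r^{d}=(r^{x}-1)/(r-1)$ with $x=q^n$ and $r=e^{i2\pi(s-\log_{\tau}\alpha)/q^n}$. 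Since $|s-\log_{\tau}\alpha|<q^n$, the modulus $q^n$ divides $s-\log_{\tau}\alpha$ only when $s=\log_{\tau}\alpha$, so the complete sum $\sum_{0\le t_1\le q^n-1}e^{i2\pi(s-\log_{\tau}\alpha)t_1/q^n}$ equals $q^n$ in that single event and $0$ otherwise; hence the incomplete sum over $1\le t_1\le q^n-1$ equals $q^n-1$ when $s=\log_{\tau}\alpha$ and $-1$ in every other case. The hypothesis that $\alpha$ is not a primitive normal element then enters to control whether the exceptional index $s=\log_{\tau}\alpha$ occurs among the $\varphi(q^n-1)$ admissible values of $s$: if $\alpha$ is not a primitive root then $\gcd(\log_{\tau}\alpha,q^n-1)>1$ and that index is absent, whereas if $\alpha$ is primitive but not normal it occurs exactly once. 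Summing the resulting $q^n-1$ and $-1$ contributions over $s$, and then over $\alpha\in\mathcal{A}$, I would collapse the triple sum to zero, giving $N_{01}(\mathcal{A})=0$.

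The step I expect to be the main obstacle is precisely this last collapse: one must show that the exceptional contribution produced whenever $\alpha$ is a primitive root that fails to be normal is cancelled, and that the residual $-1$ terms, once summed over the admissible $s$ and over the nonprimitive‑normal $\alpha$ in $\mathcal{A}$, also vanish; this seems to need either an equidistribution property of the admissible residues $s\bmod q^n$ against $\mathcal{A}$ or a more careful accounting than a naive term‑by‑term count. A secondary technical point is the mismatch between the modulus $q^n$ in the additive character $\psi(t)=e^{i2\pi t/q^n}$ and the modulus $q^n-1$ governing both $\log_{\tau}\alpha$ and the condition $\gcd(s,q^n-1)=1$: the clean geometric‑series collapse rests on the inequality $|s-\log_{\tau}\alpha|<q^n$, which has to be invoked explicitly, and the argument must be checked to be uniform in $q$, $k$ and $n$ before it can feed into Theorem~\ref{thm4343PNEFF.050}.
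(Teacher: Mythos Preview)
Your reduction is exactly the one the paper uses: set $t_2=0$, collapse the second factor to the constant $\Phi(x^n-1)/q^n$, and then try to kill the remaining incomplete geometric sum over $t_1$. The paper's proof is a single displayed line followed by the clause ``since the condition that $\alpha\in\F_{q^n}$ is a non primitive normal element implies that $\tr(\tau^r-\alpha)\ne0$ for all $r\in[1,q^n-1]$ such that $\gcd(r,q^n-1)=1$.'' It does not carry out the term-by-term evaluation you did.

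The obstacle you isolate is real, and the paper's one-line justification does not dispose of it. Your computation gives, for each fixed $\alpha$,
\[
\sum_{\substack{1\le s\le q^n-1\\ \gcd(s,q^n-1)=1}}\ \sum_{1\le t_1\le q^n-1} e^{i2\pi(s-\log_\tau\alpha)t_1/q^n}
=\begin{cases} q^n-\varphi(q^n-1), & \alpha\ \text{primitive},\\ -\varphi(q^n-1), & \alpha\ \text{not primitive},\end{cases}
\]
so that
\[
N_{01}(\mathcal{A})=\frac{\Phi(x^n-1)}{q^{2n}}\bigl(q^nP-\varphi(q^n-1)\,\#\mathcal{A}\bigr),
\]
with $P=\#\{\alpha\in\mathcal{A}:\alpha\text{ primitive}\}$. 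The hypothesis ``no $\alpha\in\mathcal{A}$ is primitive normal'' does \emph{not} force $P=0$: $\mathcal{A}$ may contain elements that are primitive but not normal, and for those the exceptional index $s=\log_\tau\alpha$ does lie in the admissible range. Hence the triple sum need not vanish. The paper's stated reason invokes a trace condition $\tr(\tau^r-\alpha)\ne0$ that does not appear in the exponent $(s-\log_\tau\alpha)t_1/q^n$ at all, and even read as $\tau^r-\alpha\ne0$ it fails precisely for primitive-but-not-normal $\alpha$. So the ``main obstacle'' you flagged is a genuine gap that the paper's own proof does not close; your cautious assessment of the last step is the correct one.
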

\begin{proof} Substitute the parameters $t_1\in [0,p-1]$ and $t_2=0$ in \eqref{eq7171SNEFR.450-CaseII2} and assume that the element $\alpha\in \F_{q^n}$ is not a primitive normal element. These steps yield
	
	\begin{align}\label{eq7171PNEFR.550j1}
		N_{01}(\mathcal{A})
		&=\sum_{\alpha\in \mathcal{A}}\sum _{\substack{1\leq s\leq q^n-1\\\gcd (s,q^n-1)=1}} \frac{1}{q^n}\sum _{0\leq t_1\leq q^n-1} e^{\frac{i 2\pi(s-\log_{\tau}\alpha)t_1}{q^n}}\; \times\;\sum _{\substack{\deg s(x)\leq n-1\\\gcd (s(x),x^n-1)=1}} \frac{1}{q^n}\nonumber\\[.3cm]
		&=0,
	\end{align}	
	since the condition that $\alpha\in \F_{q^n}$ is a non primitive normal element implies that $\tr\left (\tau^r-\alpha\right )\ne0$ for all $r\in [1,q^n-1]$ such that $\gcd (r,q^n-1)=1$.
\end{proof}
\begin{lem} \label{lem7171SNEFR.450-CaseIII}\hypertarget{lem7171SNEFR.450-CaseIII} Let $\tau\in \F_{q^n}$ be a fixed primitive normal element. If the element $\alpha$ is not a primitive normal element and $t_1\in [0,p-1]$ and $t_2=0$, then
	\begin{eqnarray}\label{eq7171SNEFR.450-CaseIII2}
		N_{10}(\mathcal{A})
		&=&\sum_{\alpha\in \mathcal{A}}\sum _{\substack{1\leq s\leq q^n-1\\\gcd (s,q^n-1)=1}} \frac{1}{q^n}\sum _{0\leq t_1\leq q^n-1} e^{\frac{i 2\pi(s-\log_{\tau}\alpha)t_1}{q^n}}\\[.3cm]
		&&\hskip 1.5 in \times\sum _{\substack{\deg s(x)\leq n-1\\\gcd (s(x),x^n-1)=1}} \frac{1}{q^n}\sum _{0\leq t_2\leq q^n-1} e^{i2\pi \frac{\tr(s(x)\circ\tau-\alpha)t_2}{q^n}}\nonumber\\[.3cm]
		&=&0\nonumber,
	\end{eqnarray}	
\end{lem}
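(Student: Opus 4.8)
The plan is to imitate the evaluation of $N_{01}$ in Lemma \ref{lem7171SNEFR.450-CaseII}: substitute the prescribed values of the parameters, collapse the double sum over $s(x)$ and $t_2$ to a positive constant, and then dispose of what remains by the same orthogonality computation that proves Lemma \ref{lem7171CFPE.150PDF}. First I would set $t_2=0$, so that the inner exponential $e^{i2\pi\tr(s(x)\circ\tau-\alpha)t_2/q^n}$ becomes identically $1$ and the second double sum reduces to $\sum_{\gcd(s(x),x^n-1)=1}q^{-n}=\Phi_q(x^n-1)/q^n$, the number of units of $\F_q[x]/(x^n-1)$ divided by $q^n$; this is the number of normal elements of $\F_{q^n}$ divided by $q^n$ (Lemma \ref{lem1771TFDD.300FF}), hence the positive constant written as $\Phi(x^n-1)/q^n$ in Lemma \ref{lem7171SNEFR.450-CaseI}, and it does not depend on $\alpha$. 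This leaves
\[
N_{10}(\mathcal{A})=\frac{\Phi(x^n-1)}{q^n}\sum_{\alpha\in\mathcal{A}}\sum_{\substack{1\le s\le q^n-1\\\gcd(s,q^n-1)=1}}\frac1{q^n}\sum_{0\le t_1\le q^n-1}e^{\frac{i2\pi(s-\log_{\tau}\alpha)t_1}{q^n}}.
\]

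Next, for each fixed $\alpha$ I would evaluate the innermost sum over $t_1$ by orthogonality. Writing the additive character as $\psi(t)=e^{i2\pi at/q^n}$ with $a$ a unit modulo $q^n$, the geometric-series identity $\sum_{0\le d\le q^n-1}r^d=(r^{q^n}-1)/(r-1)$ gives $\sum_{0\le t_1\le q^n-1}\psi\big((s-\log_{\tau}\alpha)t_1\big)=q^n$ when $s\equiv\log_{\tau}\alpha\pmod{q^n}$ and $0$ otherwise; since $0\le s,\log_{\tau}\alpha\le q^n-1$, this congruence is just the equality $s=\log_{\tau}\alpha$. Therefore the sum over $s$ collapses to the indicator that $\log_{\tau}\alpha$ is coprime to $q^n-1$, that is, the indicator that $\alpha$ is a primitive element. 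Since $\alpha$ is not a primitive normal element — and, for the subsum at hand, not a primitive element — this indicator is $0$ for every term, and summing over $\alpha\in\mathcal{A}$ yields $N_{10}(\mathcal{A})=0$.

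The delicate point is exactly this last step: the hypothesis "$\alpha$ is not a primitive normal element" does not by itself give "$\alpha$ is not a primitive element", so in the four-fold splitting of $N_q(\mathcal{A})$ one must arrange that the primitive-but-not-normal elements of $\mathcal{A}$ are assigned to a different subsum — they are detected through the normality factor handled in Lemma \ref{lem7171SNEFR.450-CaseII} — and this allocation has to be made explicit before the present lemma is applied. The rest is routine: keeping the normalization of $\psi$ consistent so that $q^n\mid a(s-\log_{\tau}\alpha)$ is equivalent to $q^n\mid s-\log_{\tau}\alpha$, and matching the polynomial totient $\Phi_q(x^n-1)$ to the symbol $\Phi(x^n-1)$ used in the companion lemmas.
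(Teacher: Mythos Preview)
Your computation is internally coherent, but you have been led astray by a typo in the stated hypothesis. The condition ``$t_1\in[0,p-1]$ and $t_2=0$'' is copied verbatim from Lemma~\ref{lem7171SNEFR.450-CaseII}; the paper's own proof of the present lemma instead substitutes $t_1=0$ and lets $t_2$ range, which is what the subscript in $N_{10}$ is meant to encode (first index for $t_1$, second for $t_2$). With $t_1=0$ it is the \emph{first} factor that collapses to the constant $\varphi(q^n-1)/q^n$, and it is the \emph{second} factor---the normal-element characteristic function of Lemma~\ref{lem7171CFNE.150NDF}---that is then claimed to vanish on the grounds that $\alpha$ is not normal. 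Your route, setting $t_2=0$ and invoking the primitive-element indicator, reproduces the $N_{01}$ computation rather than its intended dual.

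The logical gap you isolate in your final paragraph is genuine and applies with equal force to the paper's own argument: ``$\alpha$ is not a primitive normal element'' does not imply ``$\alpha$ is not normal'' any more than it implies ``$\alpha$ is not primitive,'' so the paper's assertion that $\log_{\tau}s(x)\circ\eta-\log_{\tau}\alpha\ne 0$ for every admissible $s(x)$ fails whenever $\alpha$ happens to be normal but not primitive. Neither version resolves this; as you say, the allocation of such $\alpha$ among the four subsums would have to be made explicit at the level of the decomposition in the proof of Theorem~\ref{thm4343PNEFF.050}, and the paper does not do so.
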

\begin{proof} Substitute the parameters $t_1=0$ and $t_2\in [0,p-1]$ in \eqref{eq7171PNEFR.550j1} and assume that the element $\alpha\in \F_{q^n}$ is not a primitive normal element. These steps yield
	
	\begin{align}\label{eq7171PNEFR.550k1}
		N_{10}(\mathcal{A})
		&=\sum_{\alpha\in \mathcal{A}}\sum _{\substack{1\leq s\leq q^n-1\\\gcd (s,q^n-1)=1}} \frac{1}{q^n}\times\sum _{\substack{\deg s(x)\leq n-1\\\gcd (s(x),x^n-1)=1}} \frac{1}{q^n}\sum _{0\leq t_2\leq q^n-1} e^{ \frac{i2\pi(\log_{\tau}s(x)\circ\eta-\log_{\tau}\alpha)t_2}{q^n}}\nonumber\\[.3cm]
		&=\sum_{\alpha\in \mathcal{A}}\sum _{\substack{1\leq r\leq q^n-1\\\gcd (r,q^n-1)=1}} \frac{1}{q^{n}}\sum _{\substack{\deg s(x)\leq n-1\\\gcd (s(x),x^n-1)=1}} \frac{1}{q^{n}}\sum _{0\leq t_2\leq q^n-1} e^{ \frac{i2\pi(\log_{\tau}s(x)\circ\eta-\log_{\tau}\alpha)t_2}{q^n}}\nonumber\\[.3cm]
		&=0,
	\end{align}	
	since the condition that $\alpha\in \F_{q^n}$ is a non primitive normal element implies that the trace $\tr(s(x)\circ\tau-\alpha)\ne0$ for all $\deg s(x)\leq n-1$ such that $\gcd (s(x),x^n-1)=1$.
\end{proof}

\begin{lem} \label{lem7171SNEFR.450-CaseIV}\hypertarget{lem7171SNEFR.450-CaseIV} Let $\tau\in \F_{q^n}$ be a fixed primitive normal element. If the element $\alpha$ is not a primitive normal element and $t_1\ne0$ and $t_2\ne0$, then
	\begin{eqnarray}\label{eq7171SNEFR.450-CaseIV2}
		N_{11}(\mathcal{A})
		&=&\sum_{\alpha\in \mathcal{A}}\sum _{\substack{1\leq s\leq q^n-1\\\gcd (s,q^n-1)=1}} \frac{1}{q^n}\sum _{0\leq t_1\leq q^n-1} e^{\frac{i 2\pi(s-\log_{\tau}\alpha)t_1}{q^n}}\\[.3cm]
		&&\hskip 1.5 in \times\sum _{\substack{\deg s(x)\leq n-1\\\gcd (s(x),x^n-1)=1}} \frac{1}{q^n}\sum _{0\leq t_2\leq q^n-1} e^{ \frac{i2\pi(\log_{\tau}s(x)\circ\eta-\log_{\tau}\alpha)t_2}{q^n}}\nonumber\\[.3cm]
		&=&O\left( 	\frac{\Phi (x^n-1)}{q^{n}}\cdot\# \mathcal{A}\cdot e^{-c\sqrt{\log q^n}}\right) \nonumber,
	\end{eqnarray}	
	where $c>0$ is a constant. 
\end{lem}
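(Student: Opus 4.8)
The plan is to read off the product structure already present in the display. Write $N_{11}(\mathcal{A})=\sum_{\alpha\in\mathcal{A}}E_1(\alpha)E_2(\alpha)$, where
\[
E_1(\alpha)=\frac{1}{q^n}\sum_{\substack{1\le s\le q^n-1\\\gcd(s,q^n-1)=1}}\ \sum_{1\le t_1\le q^n-1} e^{\frac{i2\pi(s-\log_{\tau}\alpha)t_1}{q^n}}
\]
is the $t_1\ne 0$ part of the divisor-free primitive indicator of Lemma \ref{lem7171CFPE.150PDF}, and
\[
E_2(\alpha)=\frac{1}{q^n}\sum_{\substack{\deg s(x)\le n-1\\\gcd(s(x),x^n-1)=1}}\ \sum_{1\le t_2\le q^n-1} e^{\frac{i2\pi(\log_{\tau}s(x)\circ\eta-\log_{\tau}\alpha)t_2}{q^n}}
\]
is the $t_2\ne 0$ part of the divisor-free normal indicator of Lemma \ref{lem7171CFNE.150NDF}. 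The proof then reduces to a uniform bound on each of the two factors, followed by the triangle inequality $|N_{11}(\mathcal{A})|\le\sum_{\alpha\in\mathcal{A}}|E_1(\alpha)|\,|E_2(\alpha)|$.

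First I would bound $E_2(\alpha)$ with only the trivial estimate. Since $\eta$ is normal, as $s(x)$ ranges over the $\Phi(x^n-1)$ units of $\F_q[x]/(x^n-1)$ the elements $s(x)\circ\eta$ range without repetition over the normal elements of $\F_{q^n}$; hence the inner geometric sum over $t_2$ equals $q^n-1$ for the unique $s(x)$ (if any) with $s(x)\circ\eta=\alpha$ and $-1$ for all the others. Therefore $|E_2(\alpha)|\le\Phi(x^n-1)/q^n$ whenever $\alpha$ is not normal (indeed with equality up to sign), and $|E_2(\alpha)|\le 1$ in all cases.

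Next I would extract the saving from $E_1(\alpha)$ by Lemma \ref{lem7770.300}: for $\alpha$ not a primitive root the inner double sum over $s$ coprime to $q^n-1$ and over $1\le t_1\le q^n-1$ is $O\!\left(q^n e^{-c\sqrt{\log q^n}}\right)$, whence $|E_1(\alpha)|\ll e^{-c\sqrt{\log q^n}}$, uniformly in $\alpha$. Feeding these two estimates into the triangle inequality gives
\[
|N_{11}(\mathcal{A})|\ll\frac{\Phi(x^n-1)}{q^n}\cdot\#\mathcal{A}\cdot e^{-c\sqrt{\log q^n}},
\]
which is the asserted bound.

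The whole difficulty is concentrated in the cancellation estimate of Lemma \ref{lem7770.300} (a Vinogradov/M\"obius-randomness bound for the incomplete sum over integers coprime to $q^n-1$, ultimately resting on Theorem \ref{thm2222.500} and Lemma \ref{lem2222.600}); once it is granted, the present lemma is one application of the triangle inequality. The one point that still needs care is that Lemma \ref{lem7770.300} controls $E_1(\alpha)$ only when $\alpha$ is not a primitive root, whereas here $\alpha$ is merely assumed not to be primitive \emph{and} normal, so an $\alpha\in\mathcal{A}$ that is primitive but not normal must be handled by transferring the cancellation to the second factor: for such $\alpha$ one unfolds the coprimality condition in $E_2(\alpha)$ by the M\"obius characteristic function over $\F_q[x]$ (as in the proof of Lemma \ref{lem7171TFDD.200f}) and invokes Corollary \ref{cor1771TFDD.300FF-A} to control the resulting divisor sum. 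Splitting $\mathcal{A}$ into its non-primitive and non-normal parts, whose union is all of $\mathcal{A}$, and bounding each contribution separately — absorbing the factor $q^n/\Phi(x^n-1)\ll\log q^n$ into a slightly smaller constant in the exponential — is the bookkeeping that completes the argument, and is the step where the substance lies.
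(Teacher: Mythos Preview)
Your approach is exactly the paper's: factor the summand as $E_1(\alpha)E_2(\alpha)$, evaluate the $t_2$-geometric sum to collapse $E_2(\alpha)$ to $-\Phi(x^n-1)/q^n$, then invoke Lemma~\ref{lem7770.300} on the remaining $(s,t_1)$ double sum and conclude by the triangle inequality over $\alpha\in\mathcal{A}$. The paper carries this out in the same order and with the same ingredients.

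You are in fact more careful than the paper on one point. When the paper writes $\sum_{s(x)}(-1)=-\Phi(x^n-1)$ after summing over $t_2$, it is tacitly assuming $\alpha$ is not \emph{normal} (so that $s(x)\circ\eta\ne\alpha$ for every admissible $s(x)$); and when it applies Lemma~\ref{lem7770.300} it is tacitly assuming $\alpha$ is not \emph{primitive}. The stated hypothesis is only ``not primitive normal'', so the paper's argument as written covers only those $\alpha$ that are neither primitive nor normal --- exactly the subtlety you flag in your last paragraph. Your proposed patch, transferring the cancellation to $E_2$ via a polynomial M\"obius unfolding for primitive-but-not-normal $\alpha$, is the right instinct, but note that Corollary~\ref{cor1771TFDD.300FF-A} gives only a lower bound on $\Phi(x^n-1)/q^n$ and no cancellation; what is actually needed is a polynomial-ring analogue of Lemma~\ref{lem7770.300}, which the paper does not supply. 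So your proof shares the paper's gap, but you have at least located it precisely.
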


\begin{proof}[\textbf{Proof}] Substitute the parameters $t_1\ne0$ and $t_2\ne0$ and assume that the element $\alpha\in \F_{q^n}$ is not a primitive normal element. Then, summing over $t_2$ and simplifying yield
	
	\begin{eqnarray}\label{eqeq7171SNEFR.450-CaseIV12}
		N_{11}(\mathcal{A})
		&=&\sum_{\alpha\in \mathcal{A}}\sum _{\substack{1\leq s\leq q^n-1\\\gcd (s,q^n-1)=1}} \frac{1}{q^n}\sum _{1\leq t_1\leq q^n-1} e^{\frac{i 2\pi(s-\log_{\tau}\alpha)t_1}{q^n}}\\[.3cm]
		&&\hskip 1.5 in \times\sum _{\substack{\deg s(x)\leq n-1\\\gcd (s(x),x^n-1)=1}} \frac{1}{q^n}\sum _{1\leq t_2\leq q^n-1} e^{ \frac{i2\pi(\log_{\tau}s(x)\circ\eta-\log_{\tau}\alpha)t_2}{q^n}}\nonumber\\[.3cm]
		&=&\frac{1}{q^{2n}} \sum_{\alpha\in \mathcal{A},}\sum _{1\leq t_1\leq q^n-1}\sum _{\substack{1\leq s\leq q^n-1\\\gcd (s,q^n-1)=1}}  e^{\frac{i 2\pi(s-\log_{\tau}\alpha)t_1}{q^n}}\times\sum _{\substack{\deg s(x)\leq n-1\\\gcd (s(x),x^n-1)=1}}  (-1)\nonumber\\[.3cm]
		&=&-\frac{\Phi (x^n-1)}{q^{2n}} \sum_{\alpha\in \mathcal{A},}\sum _{1\leq t_1\leq q^n-1}\sum _{\substack{1\leq s\leq q^n-1\\\gcd (s,q^n-1)=1}}  e^{\frac{i 2\pi(s-\log_{\tau}\alpha)t_1}{q^n}}\nonumber.
	\end{eqnarray}
	Taking absolute value and applying \hyperlink{lem7770.300}{Lemma} \ref{lem7770.300} to the inner double finite sum yield
	\begin{eqnarray} \label{eq7171SNEFR.450-CaseIV10}
		|N_{11}(\mathcal{A})|
		&=&\bigg |-\frac{\Phi (x^n-1)}{q^{2n}} \sum_{\alpha\in \mathcal{A},}\sum _{1\leq t_1\leq q^n-1}\sum _{\substack{1\leq s\leq q^n-1\\\gcd (s,q^n-1)=1}}  e^{\frac{i 2\pi(s-\log_{\tau}\alpha)t_1}{q^n}}\bigg |\\[.3cm]
		&\leq & \frac{\Phi (x^n-1)}{q^{2n}} \sum_{\alpha\in \mathcal{A}}\bigg |\sum _{1\leq t_1\leq q^n-1}\sum _{\substack{1\leq s\leq q^n-1\\\gcd (s,q^n-1)=1}}  e^{\frac{i 2\pi(s-\log_{\tau}\alpha)t_1}{q^n}}\bigg |\nonumber\\[.3cm]
		&\ll & \frac{\Phi (x^n-1)}{q^{2n}} \sum_{\alpha\in \mathcal{A}}q^ne^{-c\sqrt{\log q^n}} \nonumber
		\\[.3cm]
		&\ll & \frac{\Phi (x^n-1)}{q^{n}}\cdot\# \mathcal{A}\cdot e^{-c\sqrt{\log q^n}} \nonumber,
	\end{eqnarray}
	where $c>0$ is a constant.
\end{proof}
\section{Proof of the Main Theorem} \label{S4343PNEFF-T}\hypertarget{S4343PNEFF-T} 
The case $n=1$ is trivial since the map $f(x)=ax$ is one-to-one for every $a\ne0$. The case $n\geq2$ realizes the first nontrivial primitive element normal elements in quadratic or larger extension of a finite field $\F_q$ have both primitive elements and normal elements. Various partial results were achieved in \cite{CL1952} and \cite{DH1968}. The first complete result on the existence of primitive element normal elements in finite fields was proved in \cite{LS1987}. The first result on the existence of primitive normal elements in small subset is proved here. The basic proof presented here has the same structure of the earlier proofs, but uses two new characteristic functions for primitive elements and normal elements in $\F_{q^n}$ developed in \hyperlink{S7171CFPE-P}{Section} \ref{S7171CFPE-P} and \hyperlink{S7171CFNE-N}{Section} \ref{S7171CFNE-N}, respectively. This technique removes the dependence on the divisors of $q^n-1$ and the divisors of $x^n-1$, which plays a major role in the proofs given in \cite{CL1952}, \cite{DH1968}, \cite{LS1987}, et alia. \\

\subsection{Version I}
The characteristic functions for primitive elements and normal elements are defined by

\begin{equation}\label{eq4343PNEFF.500c2}
	\Psi (\alpha)=\left \{
	\begin{array}{ll}
		1 & \text{   \normalfont if } \ord_{q^n} (\alpha)=q^n-1,  \\[.3cm]
		0 & \text{   \normalfont if } \ord_{q^n} (\alpha)\ne q^n-1, \\
	\end{array} \right .
\end{equation}
and 
\begin{equation}\label{eq4343PNEFF.500c4}
	\Psi_q (\alpha)=	\left \{\begin{array}{ll}
		1 & \text{  \normalfont if } \Ord_q (\alpha)=x^n-1,  \\[.3cm]
		0 & \text{  \normalfont if } \Ord_q (\alpha)\neq x^n-1, \\
	\end{array} \right .
\end{equation}
respectively. 
Let $\mathcal{A}\subset \F_{q^n}$ be a nonstructured subset of elements and let 
\begin{equation}
	N_q(\mathcal{A})=\#\{\alpha \in \mathcal{A}:\ord_{q^n} (\alpha)=q^n-1 \text{ and }\Ord_q (\alpha)\neq x^n-1\}
\end{equation}
be the counting function for the number of primitive normal elements $\alpha\in \mathcal{A}$. \\

Various exponentially large subsets $\mathcal{S}\subset \F_{q^n}$ do not contain primitive normal elements, for example a proper subfield $\F_{q^d}\subset \F_{q^n}$, where $d\mid n$ and $d<n$. In view of this fact, some restrictions on the subset $\mathcal{A}\subset \F_{q^n}$ are required. There are many possible forms of nonstructured subsets, see \hyperlink{dfn7171CFNE.100S} {Definition} \ref{dfn7171CFNE.100S}. Some of these possibilities are \hyperlink{exa7171SBS.100Q1}{Example} \ref{exa7171SBS.100Q1}, \hyperlink{exa7171SBS.100Q2}{Example} \ref{exa7171SBS.100Q2}, \hyperlink{exa7171SBS.100Q3}{Example} \ref{exa7171SBS.100Q3}, $\mathcal{A}=\mathcal{A}_d(H)$ and the subset
\begin{align}\label{eq4343PNEFF.500k}
	\mathcal{A}_{d,r}&=\{\beta =a_0+a_1x+a_2x^2+\cdots+a_{d}x^{d}:\\
	&\hskip 1.05 in \gcd(a_0,a_1,\ldots,a_d)=1 
	\text{ and }a_0,\ldots, a_{r-1}\leq q^{\varepsilon/r}\}\nonumber,
\end{align}
where $r\geq 1$ and the $d-r$-tuple $(a_r,a_{b+1},\ldots,a_d)$ are fixed.

\begin{proof}[\textbf{\color{blue}Proof of \hyperlink{thm4343PNEFF.050}{Theorem} {\normalfont \ref{thm4343PNEFF.050}}}] Let $q$ be a prime power and let $n\geq2$. The hypothesis that the subset $\mathcal{A}\subset \F_{q^n}$ of cardinality $\# \mathcal{A}\ll (\log q^n)(\log\log q^n)^{1+\varepsilon}$ contain no primitive normal elements has the analytic expression
	
	\begin{equation}\label{eq4343PNEFF.500c6}
		N_q(\mathcal{A})
		=\sum_{\alpha\in \mathcal{A}}	\Psi (\alpha)	\Psi_q (\alpha)=0.
	\end{equation}
	Further, by definition the subset $\mathcal{A}$ is not a proper subfield nor a subspace of $\F_{q^n}$, see \eqref{eq4343PNEFF.500k}. Thus, the hypothesis \eqref{eq4343PNEFF.500c6} is not trivially true. Replacing the formulas of these weighted indicator functions, given in \hyperlink{lem7171CFPE.150PDF}{Lemma} \ref{lem7171CFPE.150PDF} and  \hyperlink{lem7171CFNE.150NDF}{Lemma} \ref{lem7171CFNE.150NDF}, respectively, yields
	\begin{align}\label{eq7171CNEFR.500e}
		N_q(\mathcal{A})
		&=\sum_{\alpha\in \mathcal{A}}	\Psi (\alpha)	\Psi_q (\alpha)\\[.2cm]
		&=\sum_{\alpha\in \mathcal{A}} \sum _{\substack{1\leq s\leq q^n-1\\\gcd (s,q^n-1)=1}} \frac{1}{q^n}\sum _{0\leq t_1\leq q^n-1} e^{\frac{i 2\pi(s-\log_{\tau}\alpha)t}{q^n}}\nonumber\\[.3cm]	
		&\hskip 1.75 in \times \sum _{\substack{\deg s(x)\leq n-1\\\gcd (s(x),x^n-1)=1}} \frac{1}{q^n}\sum _{0\leq t_2\leq q^n-1} e^{ \frac{i2\pi(\log_{\tau}s(x)\circ\eta-\log_{\tau}\alpha)t}{q^n}}\nonumber\\[.3cm]	
		&=N_{00}(\mathcal{A})\;+\;N_{01}(\mathcal{A})\;+\;N_{10}(\mathcal{A})\;+\;N_{11}(\mathcal{A})\nonumber.
	\end{align}
	The four subsums $N_{ij}(\mathcal{A})$ are determined by four proper subsets 
	\begin{equation}\label{eq7171CNEFR.500e5}
		\mathcal{T}_{ij}=\{(t_1,t_2)\}\subset[0,p-1]\times [0,p-1].
	\end{equation}
	These subsums are evaluated and estimated independently in \hyperlink{lem7171SNEFR.450-CaseI} {Lemma} \ref{lem7171SNEFR.450-CaseI} to \hyperlink{lem7171SNEFR.450-CaseIV} {Lemma} \ref{lem7171SNEFR.450-CaseIV}. Substituting these evaluations and estimates yield
	\begin{eqnarray}\label{eq7171PNEFR.550k11}
		N_{q}(\mathcal{A})
		&=&N_{00}(\mathcal{A})\;+\;N_{01}(\mathcal{A})\;+\;N_{10}(\mathcal{A})\;+\;N_{11}(\mathcal{A})\\[.3cm]
		&=&\frac{\varphi (q^n-1)}{q^{n}}\cdot\frac{\Phi (x^n-1)}{q^{n}}\cdot \# \mathcal{A}+0+0+O\left( \frac{\Phi (x^n-1)}{q^{n}}\cdot\# \mathcal{A}\cdot e^{-c\sqrt{\log q^n}}\right)\nonumber\\[.3cm]
		&=&\frac{\varphi (q^n-1)}{q^n}\frac{\Phi (x^n-1)}{q^n}\cdot\# \mathcal{A}\left (1+O\left ( \frac{q^n}{\varphi (x^n-1)}\cdot e^{-c\sqrt{\log q^n}}\right )\right )\nonumber.
	\end{eqnarray}	
	Replacing the lower bounds  
	\begin{equation}\label{eq7171PNEFR.550k15}
		\frac{1}{\log \log q^n}\ll \frac{\varphi(q^n-1)}{q^n}\ll 1
	\end{equation}
	see \hyperlink{lem9955P.400TL}{Lemma} \ref{lem9955P.400TL}, and 
	\begin{equation}\label{eq7171PNEFR.550k17}
		\frac{1}{\log q^n}\ll \frac{\Phi(q^n-1)}{q^n}\ll 1,
	\end{equation}
	see \hyperlink{cor1771TFDD.300FF-A}{Corollary} \ref{cor1771TFDD.300FF-A}, into \eqref{eq7171PNEFR.550k11} and any small nonstructured subset $\mathcal{A}\subset \F_{q^n}$ of cardinality $\# \mathcal{A}\gg (\log q^n)(\log\log q^n)^{1+\varepsilon}$, yield	\begin{eqnarray}\label{eq7171PNEFR.550k19}
		N_{q}(\mathcal{A})
		&=&\frac{\varphi (q^n-1)}{q^n}\cdot\frac{\Phi (x^n-1)}{q^n}\cdot\# \mathcal{A}\left (1+O\left ( \frac{q^n}{\varphi (x^n-1)}\cdot e^{-c_0\sqrt{\log q^n}}\right )\right )\nonumber\\[.3cm]
		&\gg&\frac{1}{(\log q^n)(\log\log q^{n})}\cdot\# \mathcal{A}
		\left (1+O\left ( \frac{\log\log q^n}{ e^{c_0\sqrt{\log q^n}}}\right )\right )\nonumber\\[.3cm]
		&\gg&(\log \log q^n)^{\varepsilon}
		\left (1+O\left ( \frac{\log\log q^n}{ e^{c_0\sqrt{\log q^n}}}\right )\right )\nonumber\\[.3cm]
		&\gg& 	1,
	\end{eqnarray}
	where $c_0>0$ is a constant. Clearly, as $q^n\to \infty$, in any characteristic $p\geq 2$, \eqref{eq7171PNEFR.550k19} contradicts the hypothesis \eqref{eq4343PNEFF.500c6}. Therefore, the small nonstructured subset $\mathcal{A}$ contains a primitive normal element.
\end{proof}

In synopsis, this result is effective and practical, for a fixed prime power $q=p^k\geq2^k$. In fact, any small nonstructured subset of cardinality
\begin{equation}\label{eq7171PNEFR.550k22}
 \# \mathcal{A}\gg (\log q^n)(\log\log q^n)^{1+\varepsilon},
\end{equation}
where $\varepsilon> 0$, as $n\to \infty$, contains a primitive normal element. This result is consistent with the high density of primitive normal elements encapsulated in the asymptotic formula

\begin{align}\label{eq7171PNEFR.550k24}
	N_{q}(\F_{q^n})
&=\frac{\varphi (q^n-1)}{q^n}\cdot\Phi (x^n-1)+O\left ( q^{n/2+\varepsilon}\right ),
\end{align}
proved in {\color{red}\cite[Theorem 2]{CL1952A}}. In fact, it can be viewed as a general version of \eqref{eq7171PNEFR.550k24} to any nonstructured subset or admissible subset $\mathcal{A}\subseteq \F_{q^n}$ that satisfies the inequality \eqref{eq7171PNEFR.550k22}.  

\subsection{Version II}
A sharper version of the main theorem, restricted to prime powers $q\geq8$, is included in this subsection.

\begin{thm} \label{thm4343PNEFF.050B}\hypertarget{thm4343PNEFF.050B}  Let $q=p^k$ be a prime power and $\varepsilon>0$ be a small number. Let 
$\mathcal{A}\subset \F_{q^n}$ be a nonstructured subset of cardinality 
\begin{equation}\label{eq4343PNEFF.050j10}
	\#\mathcal{A}\gg (\log\log q^n)^{2+\varepsilon}.
\end{equation}	
Then, as $q^n\to \infty$, the subset $\mathcal{A}$ contains primitive normal elements, uniformly for all $q\geq8$, $k\geq1$ and $n\geq2$. In particular, for large $q^n> q^2$, the weighted counting function for the number of primitive normal elements has the asymptotic lower bound   
\begin{equation} \label{eq4343PNEFF.050i13}
N_q(\mathcal{A})\gg(\log\log q^n)^{\varepsilon}.
	\end{equation}	
\end{thm}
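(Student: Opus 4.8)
The plan is to reuse verbatim the four-term decomposition from the proof of Theorem \ref{thm4343PNEFF.050} and to replace only the final density estimate. First I would argue by contradiction: suppose the small nonstructured subset $\mathcal{A}\subset\F_{q^n}$ with $\#\mathcal{A}\gg(\log\log q^n)^{2+\varepsilon}$ contains no primitive normal element, so that $N_q(\mathcal{A})=\sum_{\alpha\in\mathcal{A}}\Psi(\alpha)\Psi_q(\alpha)=0$. Expanding this with the divisor-free characteristic functions of Lemma \ref{lem7171CFPE.150PDF} and Lemma \ref{lem7171CFNE.150NDF} and splitting the resulting double sum over $(t_1,t_2)$ into the ranges $\{t_1=0,t_2=0\}$, $\{t_1=0,t_2\ne0\}$, $\{t_1\ne0,t_2=0\}$, $\{t_1\ne0,t_2\ne0\}$ yields $N_q(\mathcal{A})=N_{00}(\mathcal{A})+N_{01}(\mathcal{A})+N_{10}(\mathcal{A})+N_{11}(\mathcal{A})$, whose terms are exactly those evaluated in Lemma \ref{lem7171SNEFR.450-CaseI} through Lemma \ref{lem7171SNEFR.450-CaseIV}: the main term is $N_{00}(\mathcal{A})=\frac{\varphi(q^n-1)}{q^n}\cdot\frac{\Phi(x^n-1)}{q^n}\cdot\#\mathcal{A}$, the two mixed terms vanish because a non primitive normal $\alpha$ forces $\tr(\tau^r-\alpha)\ne0$ and $\tr(s(x)\circ\tau-\alpha)\ne0$ on the relevant index sets, and the oscillatory term obeys $N_{11}(\mathcal{A})=O\!\left(\frac{\Phi(x^n-1)}{q^n}\cdot\#\mathcal{A}\cdot e^{-c\sqrt{\log q^n}}\right)$ via Lemma \ref{lem7770.300}. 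Nothing in this part changes relative to Version I.

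The one new ingredient enters in the final combination. Assembling the four terms gives
\[
N_q(\mathcal{A})=\frac{\varphi(q^n-1)}{q^n}\cdot\frac{\Phi(x^n-1)}{q^n}\cdot\#\mathcal{A}\left(1+O\!\left(\frac{q^n}{\varphi(x^n-1)}\,e^{-c\sqrt{\log q^n}}\right)\right).
\]
Here, instead of the bound $\Phi(x^n-1)/q^n\gg1/\log q^n$ from Corollary \ref{cor1771TFDD.300FF-A} used in Theorem \ref{thm4343PNEFF.050}, I would invoke Corollary \ref{cor1771TFDD.300FF-C}, which for $q\geq 8$ gives the full-logarithm-sharper estimate $\Phi(x^n-1)/q^n\gg1/\log\log q^n$. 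Combined with $\varphi(q^n-1)/q^n\gg1/\log\log q^n$ from Lemma \ref{lem9955P.400TL} (applied to $q^n-1\geq5$) and with the error factor $\frac{q^n}{\varphi(x^n-1)}e^{-c\sqrt{\log q^n}}=O\!\left((\log\log q^n)\,e^{-c\sqrt{\log q^n}}\right)=o(1)$, this produces $N_q(\mathcal{A})\gg\#\mathcal{A}/(\log\log q^n)^2$. Under $\#\mathcal{A}\gg(\log\log q^n)^{2+\varepsilon}$ this is $\gg(\log\log q^n)^{\varepsilon}\to\infty$ as $q^n\to\infty$, contradicting $N_q(\mathcal{A})=0$ and simultaneously establishing the asymptotic lower bound \eqref{eq4343PNEFF.050i13}.

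The genuine work is thus concentrated in Corollary \ref{cor1771TFDD.300FF-C}, and the only subtlety I expect is verifying that its bound really is uniform in the stated range. The extremal case is $n\mid q-1$, where $x^n-1$ splits into linear factors and $\Phi(x^n-1)/q^n=(1-1/q)^n$; one must confirm that $(1-1/q)^n\gg1/\log\log q^n$ for all such $n\leq q-1$ once $q\geq8$, which is what the nonnegativity argument behind Corollary \ref{cor1771TFDD.300FF-C} provides but which fails uniformly at $q\in\{2,3,4,5,7\}$ — precisely why the threshold is raised to $q\geq8$ here. The remaining small prime powers are already covered by Theorem \ref{thm4343PNEFF.050}, so no generality is lost. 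Everything else, including the character-sum estimates and the four-case split, is identical to the proof of Theorem \ref{thm4343PNEFF.050} and need not be repeated.
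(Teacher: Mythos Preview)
Your proposal is correct and matches the paper's own proof essentially verbatim: the paper states only that the verification is the same as that of Theorem \ref{thm4343PNEFF.050} but with Corollary \ref{cor1771TFDD.300FF-C} substituted for Corollary \ref{cor1771TFDD.300FF-A} in the final combination step, which is exactly what you do. Your additional remarks on why the threshold $q\geq 8$ arises from the extremal case $n\mid q-1$ go slightly beyond what the paper spells out but are consistent with it.
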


\begin{proof}[\textbf{Proof}]The verification is the as same as the previous one, but uses \hyperlink{cor1771TFDD.300FF-C}{Corollary} \ref{cor1771TFDD.300FF-C} in \eqref{eq7171PNEFR.550k11}. 
\end{proof} 
\section{Research Problem --- Density of Primitive Normal Elements} \label{S2727SPRFR-DP}\hypertarget{S2727SPRFR-DP}
As the case for primitive roots, it seems that the asymptotic formula  \eqref{eq7171PNEFR.550k24} is actually the average number of primitive normal elements, and there is a correction constant associated to the individual prime characteristic $p$. The correction constant accounts for the interdependence of the property of being a primitive element and the property of being a normal element.\\

Define the sets $\mathcal{N}=\{\alpha\in\F_{q^n}: \alpha \text{ is a normal element }\}$ and
$\mathcal{P}=\{\alpha\in\F_{q^n}: \alpha \text{ is a primitive element}\}$. The density $\delta_q(\overline{\F_{q}})$ of primitive normal elements in the algebraic closure $\overline{\F_{q}}=\bigcup_{n\to\infty} \F_{q^n}$ is defined by 
\begin{equation} \label{eq2727SPRFR.900i1}
	\delta(\overline{\F_{q}})	=\lim_{n\to\infty}\frac{\#\{\alpha\in\F_{q^n}: \alpha \text{ is a primitive normal element }\} }{q^n}.
\end{equation}	
The asymptotic formula for the corresponding counting function should have the form
\begin{align}\label{eq2727SPRFR.900i2}
\#	\mathcal{N}\cap \mathcal{P}&=\{\alpha\in\F_{q^n}: \alpha \text{ is a primitive normal element }\} \\[.3cm]
	 &= \frac{\delta(\overline{\F_{q}})}{q^n} \cdot \varphi(q^n-1)\Phi(x^n-1)+O\left( q^{n/2+\varepsilon}\right) \nonumber,
\end{align}
where $\varepsilon>0$ is a small number. \\

Similar result is derived in {\color{red}\cite[Theorem 2]{CL1952A}}. In particular,
\begin{align}\label{eq2727SPRFR.900i3}
	\#	\mathcal{N}\cap \mathcal{P}&=\{\alpha\in\F_{q^n}: \alpha \text{ is a primitive normal element }\} \\[.3cm]
	&= \frac{1}{q^n} \cdot \varphi(q^n-1)\Phi(x^n-1)+O\left( q^{n/2+\varepsilon}\right) \nonumber.
\end{align}
It appear that \eqref{eq2727SPRFR.900i3} is the average number of primitive normal elements in a finite field $\F_{q^n}$ of random characteristic $\tchar p$ as $n\to\infty$. But, the density of primitive normal elements in a finite field $\F_{q^n}$ of specific $\tchar p$ as $n\to\infty$ has a correction factor as in \eqref{eq2727SPRFR.900i2}. \\

This seems to be similar to the case for the density $\delta(a)=A_1=0.3739558\ldots$, see \cite{SP1969}, of primes 
\begin{align}\label{eq2727SPRFR.900i4}
\pi_a(x)&=\{p\leq x: a \text{ is a primitive root} \bmod p\} \\[.3cm]
	&= \delta(a)\frac{x}{\log x}+O\left( \frac{x}{(\log x)^{2}}\right) \nonumber,
\end{align}
in the set of primes $\tP=\{2,3,5, \ldots\} $ with a random primitive root $a\ne\pm1,b^2$, and the density $(84/85)A_1\leq \delta(a)\leq (6/5)A_1$, see \cite{HC1967} and \cite{PS1995}, of primes
\begin{align}\label{eq2727SPRFR.900i5}
	\pi_a(x)&=\{p\leq x: a \text{ is a primitive root} \bmod p\} \\[.3cm]
	&= \delta(a)\frac{x}{\log x}+O\left( \frac{x}{(\log x)^{2}}\right) \nonumber,
\end{align}
in the set of primes $\tP=\{2,3,5, \ldots\} $ with a specific primitive root $a\ne\pm1,b^2$. A survey of this topic appears in \cite{SP2003}.

\section{Problems } \label{S7171CFNE-PPP}


\vskip .15in
\subsection{Primitive elements --- Totient function}
\begin{exe} \label{exe7171CFNE.010}\hypertarget{exe7171CFNE.010} {\normalfont  Let $q=p^k\geq 2$ be a prime power and let $r$ be the prime divisors of $q^n-1$. Show that
		$$\frac{\varphi (q^n-1)}{q^n}\prod _{r \mid q^n-1}\left (1+ \frac{1}{r-1}\right )=1 .$$
	} 
\end{exe}
\vskip .15in
\begin{exe} \label{exe7171CFNE.012}\hypertarget{exe7171CFNE.012} {\normalfont  Let $n\geq 1$ be an integer and let $d$ be the divisors of $q^n-1$. Show that 
		$$\sum_{d\mid q^n-1}\varphi(q^d-1)=q^n-1 .$$
	} 
\end{exe}

\vskip .15in

\subsection{Normal elements --- Totient function}
\begin{exe} \label{exe7171CFNE.014}\hypertarget{exe7171CFNE.012} {\normalfont  Let $n\geq 1$ be an integer and let $r(x)$ be the irreducible divisors of $x^n-1\in\F_q[x]$. Show that
		$$\frac{\Phi_q (x^n-1)}{q^n}\prod _{r(x) \mid x^n-1}\left (1+ \frac{1}{q^{\deg r}-1}\right )=1 .$$
	} 
\end{exe}

\vskip .15in

\begin{exe} \label{exe7171CFNE.014}\hypertarget{exe7171CFNE.012} {\normalfont  Let $n\geq 1$ be an integer and let $q$ be a prime power and let $x^n-1\in\F_q[x]$. If $n\nmid q-1$, show that
		$$\frac{\Phi(x^n-1)}{q^n-1}=\frac{q^n}{q^n-1}\prod_{d\mid n}\left(1-\frac{1}{q^{\ord_d q}} \right)^{\varphi(d)/\ord_d q}>C(n,q)$$
is 	bounded below by a constant $C(n,q)>0$.
} 
\end{exe}

\vskip .15in
\subsection{Primitive Normal elements --- Totient functions}
Computational and Relationships\\
\vskip .15 in
\begin{exe} \label{exe7171CFNE.112-2}
	{\normalfont  Let $q\geq2$ be a prime power and let $n=2$. Show that \\
		
(i) $\displaystyle \Phi(x^2-1)=(q-1)^2,$\tabto{10cm}for all $q$.\\
		
		(ii) $\displaystyle \varphi(q^2-1)=(q^2-1)\prod_{\text{prime }r\mid q^2-1}(1-r^{-1}),$\tabto{10cm}for all $q$.\\
		
		(iii) Find a condition for which
		
		$$\displaystyle \varphi(q^2-1)=\Phi(x^2-1).$$
	} 
\end{exe}

\vskip .15 in
\begin{exe} \label{exe7171CFNE.112-2f}
	{\normalfont  Let $q=p^k$ be an odd prime power and let $n=2$. Show that the quadratic finite fields $\F_{q^2}$ contains more normal elements than primitive elements. More precisely,
		
		$$\displaystyle \varphi(q^2-1)\leq \Phi(x^2-1)$$
for all $p\geq5$.
	} 
\end{exe}

\vskip .15 in
\begin{exe} \label{exe7171CFNE.112-2k}
	{\normalfont  Let $q=p^k$ be an odd prime power and let $n=2$. Show that the average density of primitive normal elements in quadratic finite fields $\F_{q^2}$ has the asymptotic,

$$PN_2(q)=	\#	\mathcal{N}\cap \mathcal{P}= \frac{1}{q^2} \cdot \varphi(q^2-1)\Phi(x^2-1)+O\left( q^{1+\varepsilon}\right) .$$
The general version appears in \eqref{eq2727SPRFR.900i3} and proved in  {\color{red}\cite[Theorem 2]{CL1952A}}. In particular, the claim 

$PN_2(q) = \varphi(q^2-1)$ for every prime power $q$, in Proposition 13.1.1, Topics in Galois fields, Algorithms Comput. Math., 29
Springer, Cham, 2020, is incorrect.
	} 
\end{exe}

\vskip .15in
\begin{exe} \label{exe7171CFNE.112-3s}
	{\normalfont  Let $q\geq2$ be odd and let $n=3$. Show that \\
		(i) $\displaystyle \Phi(x^3-1)=q^2(q-1),$\tabto{10cm}if $3\mid q$.\\
		
		(ii) $\displaystyle \varphi(q^3-1)=(q^3-1)\prod_{\text{prime }r\mid q^3-1}(1-r^{-1}),$\tabto{10cm}if $3\mid q$.\\
		
		(iii) Find a condition on the prime power $q\equiv 0\bmod 3$ for which
		
		$$\displaystyle \varphi(q^3-1)<\Phi(x^3-1).$$
		
		(iv) Find a condition on the prime power $q\equiv 0\bmod 3$ for which
		
		$$\displaystyle \varphi(q^3-1)=\Phi(x^3-1).$$	
		
		(v) Find a condition on the prime power $q\equiv 0\bmod 3$ for which
		
		$$\displaystyle \varphi(q^3-1)>\Phi(x^3-1).$$	
	} 
\end{exe}

\vskip .15in
\begin{exe} \label{exe7171CFNE.112-3a}
{\normalfont  Let $q\geq2$ be odd and let $n=3$. Show that \\
(i) $\displaystyle \Phi(x^3-1)=(q-1)^3,$\tabto{10cm}if $3\mid q-1$.\\
		
	(ii) $\displaystyle \varphi(q^3-1)=(q^3-1)\prod_{\text{prime }r\mid q^3-1}(1-r^{-1}),$\tabto{10cm}if $3\mid q-1$.\\
	
(iii) Find a condition on the prime power $q\equiv 1\bmod 3$ for which
$$\displaystyle \varphi(q^3-1)<\Phi(x^3-1).$$

(iv) Find a condition on the prime power $q\equiv 1\bmod 3$ for which

$$\displaystyle \varphi(q^3-1)=\Phi(x^3-1).$$	

(v) Find a condition on the prime power $q\equiv 1\bmod 3$ for which

$$\displaystyle \varphi(q^3-1)>\Phi(x^3-1).$$	
	} 
\end{exe}

\vskip .15in
\begin{exe} \label{exe7171CFNE.112-3b}
	{\normalfont  Let $q\geq2$ be odd and let $n=3$. Show that \\
		(i) $\displaystyle \Phi(x^3-1)=(q-1)(q^2-1),$\tabto{10cm}if $3\nmid q-1$.\\
		
		(ii) $\displaystyle \varphi(q^3-1)=(q^3-1)\prod_{\text{prime }r\mid q^3-1}(1-r^{-1}),$\tabto{10cm}if $3\nmid q-1$.\\
		
		(iii) Find a condition on the prime power $q\equiv 2\bmod 3$ for which		
		$$\displaystyle \varphi(q^3-1)<\Phi(x^3-1).$$
		
		(iv) Find a condition on the prime power $q\equiv 2\bmod 3$ for which
		
		$$\displaystyle \varphi(q^3-1)=\Phi(x^3-1).$$	
		
		(v) Find a condition on the prime power $q\equiv 2\bmod 3$ for which
		
		$$\displaystyle \varphi(q^3-1)>\Phi(x^3-1).$$	
	} 
\end{exe}

\vskip .15in

\begin{exe} \label{exe7171CFNE.016}\hypertarget{exe7171CFNE.012} {\normalfont  Let $n\geq 1$ be an integer and let $d(x)$ be the divisors of $x^n-1\in\F_q[x]$. Show that $$\sum_{d(x)\mid x^n-1}\Phi_q(d(x))=\Phi_q(x^n-1) .$$
	} 
\end{exe}
\vskip .15in

\begin{exe} \label{exe7171CFNE.018}\hypertarget{exe7171CFNE.012} {\normalfont  Let $n\geq 1$ be an integer and let $d(x)$ be the divisors of $x^n-1\in\F_q[x]$. Show that $$\sum_{\alpha\in \F_{q^n}}\sum_{d(x)\mid x^n-1}\#\{\Ord \alpha=d(x)\} =\Phi_q(x^n-1) .$$
	} 
\end{exe}
\vskip .15in
\subsection{Arithmetic functions identities}
\begin{exe} \label{exe7171CFNE.019}\hypertarget{exe7171CFNE.012} {\normalfont  Let $q$ be a prime power and let $n\geq 1$ be an integer. Determine whether or not the definition of the sum of divisors function $\sigma_q(x^n-1)=\sum_{d(x)\mid x^n-1}q^{\deg d(x)}$ over the ring $\F_q[x]$ leads to an analogous sigma-phi identity $\displaystyle \frac{\sigma(n)}{n} \frac{\varphi(n)}{n}=\prod_{p^v\mid \mid n}(1-p^{-(v+1)}) $ for the ratio $$\frac{\sigma_q(x^n-1)}{q^n-1} \frac{\Phi(x^n-1)}{q^n-1}\overset{?}{=}\prod_{r(x)^v\mid \mid x^n-1}\left( 1-\frac{1}{q^{\deg r(x)}}\right) ,$$
			where $r(x)\mid x^n-1$ ranges over the irreducible factors. 
	} 
\end{exe}

\subsection{Exponential sums}
\vskip .15in
\begin{exe} \label{exe7575CFNE.082}\hypertarget{exe7575CFNE.082} {\normalfont  Let $n\geq 1$ be an integer and let $r(x)$ be the irreducible divisors of $x^n-1\in\F_q[x]$. If $\psi(\alpha)$ is an additive character of order $\Ord_q \psi=r(x)$, verify that
		$$\sum _{\Ord(\psi ) = r(x)} \psi (\alpha)
		=
		\left \{\begin{array}{ll}
			q^{\deg r}-1 & \text{ if } \Ord_q (\alpha)\equiv 0\bmod r(x),  \\
			-1& \text{ if } \Ord_q (\alpha)\not\equiv 0\bmod r(x). \\
		\end{array} \right .\nonumber $$
	} 
\end{exe}
\vskip .15in
\subsection{Periodicity of characteristic functions}

\begin{exe} \label{exe8787CFNE.082}\hypertarget{exe8787CFNE.082} {\normalfont  Let $\Psi$ be the characteristic function of primitive elements $\alpha\in\F_{q^n}$. Verify that
		$$\Psi (\alpha)
		=\Psi (\alpha+q^n-1). $$
	} 
\end{exe}

\subsection{Convergence of Simple Sums } \label{exe9955S}

\begin{exe} \label{exe9955.021} {\normalfont  Let $x\geq 1$ be a large number. Let $\mu(n)$ and $\varphi(n)$ be the Mobius and totient functions respectively. Show that 
		$$
		\sum_{n\leq x}\frac{1}{n\varphi(n)}\geq a_0>0, \qquad \text{ and } \qquad \sum_{n\leq x}\frac{\mu(n)}{n\varphi(n)}\geq a_1>0,
		$$
		where $a_0, a_1>0$ are constants. Hint: use the identity $n/\varphi(n)=\sum_{d\mid n}\mu(d)/d$.
	} 
\end{exe}
\vskip .15 in
\begin{exe} \label{exe9955.031} {\normalfont  Let $x\geq 1$ be a large number. Let $\mu(n)$ and $\varphi(n)$ be the Mobius and totient functions respectively. Show that 
		$$
		\sum_{n\geq x}\frac{1}{n\varphi(n)}=O\left (\frac{1}{x}\right ), \qquad \text{ and } \qquad \sum_{n\leq x}\frac{\mu(n)}{n\varphi(n)}=O\left (\frac{1}{x}\right ).
		$$
		
	} 
\end{exe}
\vskip .15 in

\begin{exe} \label{exe9955.030} {\normalfont  Let $x\geq 1$ be a large number. Let $\lambda(n)$ and $\varphi(n)$ be the Mobius and totient functions respectively. Show that 
		$$
		\sum_{n\leq x}\frac{1}{n\varphi(n)}\geq b_0>0, \qquad \text{ and } \qquad \sum_{n\leq x}\frac{\lambda(n)}{n\varphi(n)}\geq b_1>0,
		$$
		where $b_0, b_1>0$ are constants. Hint: use the identity $n/\varphi(n)=\sum_{d\mid n}\mu(d)/d$.
	} 
\end{exe}
\subsection{Equivalence Relations and Metric Spaces } \label{exe9955MS}

\begin{exe} \label{exe9955MS.011} {\normalfont  Let $\F_q[x]$ be the ring of polynomials, and $d(r,s)=H(s-r)$ be the Hamming metric function. Prove \hyperlink{prop7171CFNE.200C}{Proposition} \ref{prop7171CFNE.200C}.
	} 
\end{exe}
\vskip .15 in
\begin{exe} \label{exe9955MS.012} {\normalfont  Let $\F_q[x]$ be the ring of polynomials, and $d(r,s)=H(s-r)$ be the Hamming metric function. Show that set inclusion is a nonsymmetric equivalent relation with respect to this metric. Specifically, let $A,B,C\subset \F_q[x]$. If $A\subset B$ and $B\subset C$ then $A\subset C$. But $A\subseteq B$ does not imply $B\subseteq A$.
	} 
\end{exe}
\vskip .15 in

\begin{exe} \label{exe9955MS.021} {\normalfont  Let $\F_q[x]$ be the ring of polynomials, and $d(r,s)=h(s-r)$ be the height metric function. Prove \hyperlink{prop7171CFNE.200D}{Proposition} \ref{prop7171CFNE.200D}.
	} 
\end{exe}
\vskip .15 in
\begin{exe} \label{exe9955MS.022} {\normalfont  Let $\F_q[x]$ be the ring of polynomials, and $d(r,s)=h(s-r)$ be the height metric function. Show that set inclusion is a nonsymmetric equivalent relation with respect to this metric. Specifically, let $A,B,C\subset \F_q[x]$. If $A\subset B$ and $B\subset C$ then $A\subset C$. But $A\subseteq B$ does not imply $B\subseteq A$.
	} 
\end{exe}
\vskip .15 in
{\small

\end{document}